% !TeX encoding = utf-8
% !TeX program = pdflatex
% !TeX spellcheck = en_US
% !TeX bibfile = references.bib

\pdfoutput=1
\synctex=1
\documentclass[12pt,reqno,final]{amsart}

\usepackage[utf8]{inputenc}
\usepackage{xcolor}
\usepackage[margin=24mm]{geometry}
\usepackage{setspace}
\setlength{\parindent}{0pt}
\setlength{\parskip}{\medskipamount}
\usepackage{indentfirst}
\usepackage{tgtermes}
\usepackage[right,mathlines]{lineno}
\usepackage{mathrsfs,mathtools,amssymb,stmaryrd,tikz-cd}

\theoremstyle{plain}
\newtheorem{introthm}{Theorem}

\newtheorem{introQ}[introthm]{Question}

\newtheorem{theorem}{Theorem}[section]
\newtheorem{lemma}[theorem]{Lemma}
\newtheorem{proposition}[theorem]{Proposition}
\newtheorem{corollary}[theorem]{Corollary}

\theoremstyle{definition}
\newtheorem{definition}[theorem]{Definition}
\newtheorem{example}[theorem]{Example}

\theoremstyle{remark}
\newtheorem{remark}[theorem]{Remark}

\usepackage[backend=biber,bibencoding=utf8,sortcites,
style=numeric,url=true,doi=false,eprint=false,
isbn=false,hyperref=auto,backref=false]{biblatex}
\addbibresource{references.bib}

\usepackage[notcite,notref,color]{showkeys}

\usepackage[pagebackref=false,hidelinks,
unicode=true,bookmarks=true,linktoc=page,
pdfstartview={FitH},final=true]{hyperref}

\allowdisplaybreaks[3]
\binoppenalty=\maxdimen
\relpenalty=\maxdimen
\setlength{\emergencystretch}{3em}

\usepackage{macros}

\title[homotopy abelian]{Kapranov $L_{\infty}[1]$ algebras}

\author{Ruggero Bandiera}
\address{Universit\`a degli studi di Roma La Sapienza, Dipartimento di Matematica Guido Castelnuovo}
\email{bandiera@mat.uniroma1.it}

\author{Seokbong Seol}
\address{School of Mathematics, Korea Institute for Advanced Study}
\email{azuredream89@kias.re.kr}

\author{Mathieu Stiénon}
\address{Department of Mathematics, Pennsylvania State University}
\email{stienon@psu.edu}

\author{Ping Xu}
\address{Department of Mathematics, Pennsylvania State University}
\email{ping@math.psu.edu}

\thanks{The second author is supported by the KIAS Individual Grants MG090801 and MG090802
at Korea Institute for Advanced Study. The fourth author's research is partially supported
by the National Science Foundation (award DMS-2302447)
and the Simons Foundation (award MP-TSM-00002272).}

\begin{document}

\begin{abstract}
Given any Kähler manifold $X$, Kapranov discovered an $L_\infty[1]$ algebra structure
on $\Omega^{0,\bullet}_X(T^{1,0}_X)$. Motivated by this result, we introduce,
as a generalization of $L_\infty[1]$ algebras, a notion of $L_\infty[1]$ $\mathfrak{R}$-algebra,
where $\mathfrak{R}$ is a differential graded commutative algebra with unit.
We show that standard notions (such as quasi-isomorphism and linearization)
and results (including homotopy transfer theorems)
can be extended to this context. For instance, we provide a linearization theorem.

As an application, we prove that, given any DG Lie algebroid $(\mathcal{L},Q_{\mathcal{L}})$
over a DG manifold $(\mathcal{M},Q)$, there exists an induced $L_\infty[1]$ $\mathfrak{R}$-algebra
structure on $\Gamma(\mathcal{L})$, where $\mathfrak{R}$ is the DG commutative algebra
$(C^\infty(\mathcal{M}),Q)$ --- its unary bracket is $Q_{\mathcal{L}}$ while its binary bracket
is a cocycle representative of the Atiyah class of the DG Lie algebroid.
This $L_\infty[1]$ $\mathfrak{R}$-algebra $\Gamma(\mathcal{L})$ is linearizable
if and only if the Atiyah class of the DG Lie algebroid vanishes.
However, the $L_\infty[1]$ ($\mathbb{K}$-)algebra $\Gamma(\mathcal{L})$
induced by this $L_\infty[1]$ $\mathfrak{R}$-algebra is necessarily homotopy abelian.

As a special case, we prove that, given any complex manifold $X$,
the Kapranov $L_\infty[1]$ $\mathfrak{R}$-algebra $\Omega^{0,\bullet}_X(T^{1,0}_X)$,
where $\mathfrak{R}$ is the DG commutative algebra $(\Omega^{0,\bullet}_X,\bar{\partial})$,
is linearizable if and only if the Atiyah class of the holomorphic tangent bundle $T_X$ vanishes.
Nevertheless, the induced $L_\infty[1]$ $\mathbb{C}$-algebra structure
on $\Omega^{0,\bullet}_X(T^{1,0}_X)$ is necessarily homotopy abelian.
\end{abstract}

\maketitle

\tableofcontents

\nolinenumbers

\section{Introduction}

In his seminal paper \cite{MR1671737} on Rozansky--Witten invariants,
Kapranov discovered a natural $L_\infty[1]$ algebra structure (over $\CC$)
on the Dolbeault complex $\Omega_{X}^{0,\bullet}(T^{1,0}_X)$ of any arbitrary Kähler
manifold $X$. This $L_\infty[1]$ algebra has a very special feature: all its multibrackets
$\lambda_k$ are $\Omega_{X}^{0,\bullet}$-multilinear except for the unary bracket $\lambda_1$,
which is the Dolbeault differential operator $\overline{\partial}$.
Instead of being $\Omega_{X}^{0,\bullet}$-linear, the unary bracket
$\lambda_1:\Omega_{X}^{0,\bullet}(T^{1,0}_X)\to\Omega_{X}^{0,\bullet+1}(T^{1,0}_X)$
satisfies the Leibniz rule; that is,
$\big(\Omega_{X}^{0,\bullet}(T^{1,0}_X),\lambda_1=\overline{\partial}\big)$
is a DG module over the DG commutative algebra
$\big(\Omega_{X}^{0,\bullet},\overline{\partial}\big)$.
Furthermore, the binary bracket $\lambda_2$ is the composition of the wedge product
$\OO^{0,k}(T^{1,0}_X)\otimes\OO^{0,l}(T^{1,0}_X)
\xto{\wedge}\OO^{0,k+l}(T^{1,0}_X\otimes T^{1,0}_X)$
and the contraction $\OO^{0,k+l}(T^{1,0}_X\otimes T^{1,0}_X)
\xto{\mathcal{R}_2}\OO^{0,k+l+1}(T^{1,0}_X)$
with the Atiyah cocycle
$\mathcal{R}_2\in\OO^{0,1}\big(\Hom(T^{1,0}_X\otimes T^{1,0}_X,T^{1,0}_X)\big)$.
The Atiyah class of a Kähler (or more generally complex) manifold $X$
captures the obstruction to the existence of a holomorphic connection
on its holomorphic tangent bundle $T_X$.
This class is generally non-trivial, and it is its very non-triviality
that gives rise to the Rozansky--Witten invariants of
3-manifolds \cite{MR1481135,MR2771578,MR1671725,MR2661534,arXiv:math/0404360}.

Subsequently, one of the authors \cite{MR3579974,MR3622306} proved
that the $L_\infty[1]$ algebras discovered by Kapranov are homotopy abelian,
i.e.\ $L_\infty[1]$ isomorphic to the abelian $L_\infty[1]$ algebra
\[ \big(\Omega_{X}^{0,\bullet}(T^{1,0}_X);\lambda_1=\overline{\partial},
\lambda_2=0,\lambda_3=0,\lambda_4=0,\cdots\big) .\]
Since the Atiyah class is generally non-trivial, this observation initially seems puzzling.

Kapranov's construction of $L_{\infty}[1]$ algebra was extended to all \emph{complex} manifolds.
Indeed, it has been generalized in two different contexts: Lie pairs and DG manifolds.
A Lie pair $(L,A)$ is a pair of Lie algebroids over the same base manifold $M$
such that $A$ is a Lie subalgebroid of $L$.
It was proved in~\cite{MR4271478} that every Lie pair $(L,A)$ gives rise
to an induced $L_{\infty}[1]$ algebra structure
on $\Omega_{A}^{\bullet}(L/A):=\sections{\Lambda^{\bullet} A^{\vee}\tensor L/A}$,
whose unary bracket is the Chevalley--Eilenberg differential
corresponding to the Bott $A$-connection on $L/A$
and whose binary bracket is a cocycle representative of the Atiyah class of the Lie pair $(L,A)$.
In particular, the Lie pair $(\tangentcc{X},\tangentzo{X})$ arising from a complex manifold $X$
gives rise to an $L_{\infty}[1]$ algebra structure on the Dolbeault complex
$\Omega_{X}^{0,\bullet}(T^{1,0}_X)$,
extending Kapranov's construction for Kähler manifolds.
A parallel construction producing $L_{\infty}[1]$ algebras from DG manifolds
was obtained in~\cite{MR3319134}. More precisely, it was proved that,
given any DG manifold $(\cM,Q)$, the space of vector fields $\XX(\cM)$
admits an $L_{\infty}[1]$ algebra structure, whose unary bracket is the Lie derivative
$\liederivative{Q}$ in the direction of the homological vector field $Q$
and whose binary bracket is a cocycle representative
of the Atiyah class of the DG manifold $(\cM,Q)$.

The present paper is devoted to answering the following question:
\begin{introQ}\label{introQA}
Are the $L_{\infty}[1]$ algebras arising from Lie pairs and/or DG manifolds homotopy abelian as
are the $L_{\infty}[1]$ algebras discovered by Kapranov?
If so, how can the binary bracket be trivial (up to homotopy) and be a cocycle representative
of the possibly nontrivial Atiyah class?
\end{introQ}

The central idea introduced in this paper is the notion of
$L_{\infty}[1]$ $\rR$-algebras, where $\rR=(\grR,d_\grR)$ is a DG commutative algebra with unit.
An $L_{\infty}[1]$ $\rR$-algebra is an $L_{\infty}[1]$ algebra
$\big(\LL;d_\LL,q_2,q_3,q_4,\cdots\big)$ such that (1) the cochain complex $(\LL,d_{\LL})$
is a DG module over the DG algebra $\rR=(\grR,d_\grR)$ and (2) the multibrackets
$q_n=\nst{\LL}{\grR}{n}\to\LL$ for $n\geq 2$ are graded symmetric graded
$\grR$-multilinear maps, each of degree $1$.
The ordinary $L_{\infty}[1]$ algebras are simply those corresponding
to the case where $\rR$ is the field $\KK$ seen as a DG algebra concentrated
in degree zero with (necessarily) trivial differential.
The usual notions of morphism, quasi-isomorphism, linearization, and homotopy abelianness
can be straightforwardly adapted to $L_{\infty}[1]$ $\rR$-algebras.
Moreover, a homotopy transfer theorem
holds for $L_{\infty}[1]$ $\rR$-algebras --- see Theorem~\ref{thm:transfer}.

An $L_{\infty}[1]$ $\rR$-algebra structure on a graded
$\grR$-module $\LL$ is equivalent to a coderivation $Q$ of degree
$1$ of the $\grR$-coalgebra $S_{\grR}(\LL)$ such that
$Q(1_{\grR})=0$. In other words, $(\LL;q_1,q_2,q_3,\cdots)$ is an $L_{\infty}[1]$
$\rR$-algebra if and only if $(S_{\grR}(\LL),Q)$
is a DG $\rR$-coalgebra --- the $n$-th multibracket $q_n$ is the composition
$S^n_{\grR}(\LL)\into S_{\grR}(\LL)\xto{Q}S_{\grR}(\LL)\onto S^1_{\grR}(\LL)=\LL$.
Then, the space $\coDer_{\grR}(S_{\grR}(\LL))$ of coderivations
of the $\grR$-coalgebra $S_{\grR}(\LL)$ admits a structure
of a DG $\rR$-module with differential
\[ \liederivative{Q}=[Q,-]:\coDer_{\grR}^{\bullet}(S_{\grR}(\LL))
\to\coDer_{\grR}^{\bullet+1}(S_{\grR}(\LL)) .\]

Evaluating a coderivation $\phi$ of $(S_{\grR}(\LL))$ at $1_\grR$
returns an element of $\LL$: $\phi(1_\grR)\in\LL\subset S_{\grR}(\LL)$
--- see for instance \cite[Lemma~12.7.3]{MR4485797}.
Indeed, evaluation at $1_\grR$ is a morphism
\[ \operatorname{ev}_{1_{\grR}}:\coDer_{\grR}(S_{\grR}(\LL))\to\LL \]
of DG $\rR$-modules. Its kernel is the space of coderivations $\phi$
with vanishing $0$-th Taylor coefficient $\phi_{0}=0$,
which we denote by $F^1\coDer_{\grR}(S_{\grR}(\LL))$.
Thus, we have the exact sequence of DG $\rR$-modules
\begin{equation}\label{splitseq0}
0\to(F^1\coDer_{\grR}(S_{\grR}(\LL)),\liederivative{Q})
\to(\coDer_{\grR}(S_{\grR}(\LL)),\liederivative{Q})
\xrightarrow{\operatorname{ev}_1}(\LL, d_{\LL})\to 0
.\end{equation}

We prove the following linearization theorem:
\begin{introthm}[Theorem~\ref{thm:main1}]
\label{introthmA}
An $L_{\infty}[1]$ $\rR$-algebra $(\LL;d_{\LL},q_2,\ldots,q_n,\ldots)$
is linearizable as an $L_{\infty}[1]$ $\rR$-algebra,
i.e.\ is isomorphic to the $L_{\infty}[1]$ $\rR$-algebra $(\LL;d_{\LL},0,\ldots,0,\ldots)$,
if and only if the short exact sequence \eqref{splitseq0}
splits in the category of DG $\rR$-modules.
\end{introthm}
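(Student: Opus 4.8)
The plan is to prove both implications by reducing to isomorphisms with identity linear part, working throughout at the level of codifferentials of the cocommutative $\grR$-coalgebra $S_\grR(\LL)$. I would write an $L_{\infty}[1]$ $\rR$-algebra structure on $\LL$ refining a fixed $d_\LL$ as a codifferential $Q = Q_{\mathrm{lin}} + \bar Q$ of $S_\grR(\LL)$, where $Q_{\mathrm{lin}}$ is the codifferential of the linearized structure $(\LL; d_\LL, 0, \dots)$ and $\bar Q \in F^2\coDer_\grR(S_\grR(\LL))$ collects the higher brackets $q_n$, $n \ge 2$; here $F^p\coDer_\grR(S_\grR(\LL))$ is the space of coderivations all of whose Taylor coefficients of arity $< p$ vanish, a decreasing, exhaustive and complete filtration with $[F^p, F^{p'}] \subseteq F^{p + p' - 1}$. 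Two observations set the stage. First, the sequence \eqref{splitseq0} for the \emph{linear} codifferential $Q_{\mathrm{lin}}$ splits canonically via the $\grR$-linear degree-zero map $x \mapsto \hat x$, where $\hat x$ is the coderivation with sole Taylor coefficient $1_\grR \mapsto x$: indeed $\operatorname{ev}_1(\hat x) = x$ and $[Q_{\mathrm{lin}}, \hat x] = \widehat{d_\LL x}$. Second, it is enough to produce an isomorphism with identity linear Taylor coefficient, because the linear part $\Phi_1$ of an arbitrary $L_{\infty}[1]$ $\rR$-isomorphism is an automorphism of $(\LL, d_\LL)$, hence also an $L_{\infty}[1]$ $\rR$-automorphism of $(\LL; d_\LL, 0, \dots)$, so $\Phi_1^{-1} \circ \Phi$ has identity linear part.

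For the implication ``linearizable $\Rightarrow$ \eqref{splitseq0} splits'', I would start from an isomorphism of DG $\grR$-coalgebras $\Phi \colon (S_\grR(\LL), Q) \to (S_\grR(\LL), Q_{\mathrm{lin}})$ with $\Phi_1 = \mathrm{id}_\LL$ (provided by the reduction) and transport the canonical splitting. Conjugation $\operatorname{Ad}_\Phi \colon \phi \mapsto \Phi \circ \phi \circ \Phi^{-1}$ is an isomorphism of DG $\grR$-modules $(\coDer_\grR(S_\grR(\LL)), [Q, -]) \to (\coDer_\grR(S_\grR(\LL)), [Q_{\mathrm{lin}}, -])$; it intertwines the two maps $\operatorname{ev}_1$ (because $\Phi(1_\grR) = 1_\grR$ and $\Phi_1 = \mathrm{id}$), hence preserves $F^1$ and is an isomorphism of short exact sequences from \eqref{splitseq0} for $Q$ to \eqref{splitseq0} for $Q_{\mathrm{lin}}$ over the identity of $(\LL, d_\LL)$. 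Pulling back the canonical splitting along $\operatorname{Ad}_\Phi$ then splits \eqref{splitseq0}.

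For the converse, suppose \eqref{splitseq0} admits a DG $\grR$-module section $\sigma \colon (\LL, d_\LL) \to (\coDer_\grR(S_\grR(\LL)), [Q, -])$ of $\operatorname{ev}_1$; I would build a linearizing isomorphism by an arity-by-arity recursion, killing the lowest surviving higher bracket at each stage. Assume inductively that $q_2 = \dots = q_{n-1} = 0$ (the base case $n = 2$ is vacuous), write $\sigma(x) = \hat x + \tau(x)$ with $\tau(x) \in F^1\coDer_\grR(S_\grR(\LL))$, and let $\tau^{(k)}(x) \in \Hom_\grR(S^k_\grR(\LL), \LL)$ be the arity-$k$ Taylor coefficient of $\tau(x)$. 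Expanding the coderivation-bracket formula and using that all brackets of arity strictly between $1$ and $n$ vanish, the arity-$(n-1)$ component of the chain-map identity $[Q, \sigma(x)] = \widehat{d_\LL x} + \tau(d_\LL x)$ retains only the contributions of $d_\LL$, $q_n$ and $\tau^{(n-1)}$, and becomes (up to an overall sign)
\[ q_n(x, -, \dots, -) \;=\; \tau^{(n-1)}(d_\LL x) \;\pm\; \bigl[Q_{\mathrm{lin}}, \tau^{(n-1)}(x)\bigr] \qquad (x \in \LL) . \]
In other words, the element $x \mapsto q_n(x, -, \dots, -)$ of $\Hom_\grR(\LL \otimes_\grR S^{n-1}_\grR(\LL), \LL)$ is exact for the differential induced by $d_\LL$, with explicit primitive $\pm \tau^{(n-1)}$. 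Applying the graded-symmetrization retraction $\Hom_\grR(\LL \otimes_\grR S^{n-1}_\grR(\LL), \LL) \to \Hom_\grR(S^n_\grR(\LL), \LL)$, which is a chain map (this is where $\operatorname{char} \KK = 0$ is used), we learn that $q_n$ itself is $[Q_{\mathrm{lin}}, -]$-exact, with primitive $\pm\operatorname{sym}(\tau^{(n-1)})$. I would then conjugate $Q$ by $\exp(\xi^{(n)})$, where $\xi^{(n)}$ is the degree-zero coderivation whose sole (arity-$n$) Taylor coefficient is that primitive: this replaces $q_n$ by $0$ and leaves $q_1, \dots, q_{n-1}$ unchanged (since $[\xi^{(n)}, F^{\ge 1}] \subseteq F^{\ge n}$), while $\operatorname{Ad}_{\exp(-\xi^{(n)})} \circ \sigma$ is a section of the corresponding $\operatorname{ev}_1$ for the new codifferential, completing the induction step. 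Iterating, the composite $\exp(\xi^{(2)}) \circ \exp(\xi^{(3)}) \circ \cdots$ converges in each arity by completeness of $F^\bullet$ and is an $L_{\infty}[1]$ $\rR$-isomorphism onto $(\LL; d_\LL, 0, \dots)$.

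The hard part will be the converse, and within it the arity-$(n-1)$ computation above: one has to pin down exactly which Taylor coefficients of $Q$ and of $\sigma(x)$ feed a fixed arity in the coderivation-bracket formula, and then track the Koszul signs through symmetrization. Conceptually, I view the whole argument as follows: $\bar Q$ is a Maurer--Cartan element of the DG Lie algebra $(F^2\coDer_\grR(S_\grR(\LL)), [Q_{\mathrm{lin}}, -], [-, -])$, an $L_{\infty}[1]$ $\rR$-isomorphism with identity linear part is precisely the exponential of a degree-zero element of $F^2\coDer_\grR(S_\grR(\LL))$, so linearizability means that $\bar Q$ is gauge-equivalent to $0$; the recursion simply distils such a gauge transformation, one arity at a time, out of the section $\sigma$.
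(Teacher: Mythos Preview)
Your proof is correct. The ``only if'' direction coincides with the paper's: both reduce to $f_1=\id_\LL$ and transport the canonical splitting $x\mapsto\sigma(x)=x\odot-$ by conjugation along the linearizing isomorphism. For the ``if'' direction, however, your route differs genuinely from the paper's. The paper first identifies the given $L_\infty[1]$ $\rR$-structure with the higher-derived-bracket structure on $\LL$ obtained from the DG Lie $\rR$-algebra inclusion $F^1\coDer_\grR(S_\grR(\LL))\hookrightarrow\coDer_\grR(S_\grR(\LL))$ via the canonical (non-DG) splitting $\sigma$; it then invokes a general result (their Theorem~\ref{thm:derived} and Corollary~\ref{cor:split}, built on the Fiorenza--Manetti cone and homotopy transfer) asserting that the derived-bracket $L_\infty[1]$ structure is independent of the splitting up to isomorphism and is abelian whenever the splitting is a chain map. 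Your argument is instead a direct arity-by-arity obstruction calculation: the DG section forces the lowest surviving bracket $q_n$ to be $[Q_{\mathrm{lin}},-]$-exact, conjugation by an exponential kills it, and iteration converges in the $F^\bullet$-topology. Your approach is more elementary and self-contained---it avoids the cone construction and homotopy transfer entirely---while the paper's approach, once that machinery is in place, is shorter and explains conceptually why any two splittings yield isomorphic structures. Two small cosmetic points: the infinite composite linearizing $Q$ should read $\cdots\exp(\xi^{(3)})\exp(\xi^{(2)})$ rather than the reverse order, and the transported section in the inductive step is $\operatorname{Ad}_{\exp(\xi^{(n)})}\circ\sigma$ (positive sign); neither affects the substance.
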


To address Question~\ref{introQA}, we study $L_{\infty}[1]$ $\rR$-algebras
from the perspective of changes of base DG commutative algebra $\rR$.

It is well known that a morphism of DG commutative algebras
$\rS=(\cS,d_{\cS})\xto{f}(\grR,d_{\grR})=\rR$
induces an adjunction between the categories $\dgSmod$ of DG $\rS$-modules
and $\dgRmod$ of DG $\rR$-modules:
the right adjoint functor is the restriction of scalars $\dgRmod\to\dgSmod$
(turning a DG $\rR$-module $(\LL,d_{\LL})$ into the DG $\rS$-module $(\LL,d_{\LL})$
with the $f$-induced $\rS$-module structure),
while the left adjoint functor is the extension of scalars $\dgRmod\to\dgSmod$
(turning a DG $\rS$-module $(\LL, d_{\LL})$ into the DG $\rR$-module
$(\grR\otimes_{\cS}\LL,d_{\grR}\otimes\id+\id\otimes d_{\LL})$).
We show that this pair of adjoint functors can be upgraded to an adjunction
between the categories $\LSA$ of $L_\infty[1]$ $\rS$-algebras
and $\LRA$ of $L_\infty[1]$ $\rR$-algebras.
Obviously, the restriction of scalars functor $\LRA\to\LSA$ preserves linearizability.
A linearizable $L_{\infty}[1]$ $\rR$-algebra is thus automatically
a linearizable $L_{\infty}[1]$ $\KK$-algebra.
However, the converse is not true. For instance, the Dolbeault complex
$\Omega_{X}^{0,\bullet}(T^{1,0}_X)$ doesn’t necessarily need to be linearizable
as an $L_\infty[1]$ $\rR$-algebra over the DG commutative algebra
$\rR=(\Omega_{X}^{0,\bullet},\bar{\partial})$, which can result
in nontrivial Rozansky—Witten invariants of 3-manifolds. Nevertheless,
it is necessarily linearizable \emph{as an $L_\infty[1]$ $\mathbb{C}$-algebra},
i.e.\ homotopy abelian.

The second goal of the paper is to address the linearization
problem for $L_{\infty}[1]$ $\rR$-algebras arising from geometry:
namely those arising from Lie pairs and from DG manifolds.
We study them within the unifying framework provided by DG Lie algebroids.
By definition, DG Lie algebroids are Lie algebroid objects in the category of DG manifolds.
Both DG manifolds and Lie pairs give rise to DG Lie algebroids in a natural fashion.
On one hand, given a DG manifold $(\cM,Q)$, its tangent bundle $\tangent{\cM}$
(together with the Lie derivative $\liederivative{Q}$ along the homological vector field $Q$)
is a DG Lie algebroid denoted by $(\tangent{\cM},\liederivative{Q})$.
On the other hand, given a Lie pair $(L,A)$, the pull-back $\pi^{!}L$ of the Lie algebroid
$L\to M$ along the morphism of graded manifolds $A[1]\to M$
(together with the Lie derivative $\liederivative{s}$ along a section
$s$ of degree $1$ of $\pi^{!}L\to A[1]$ satisfying $[s,s]=0$)
gives a rise to a DG Lie algebroid denoted by $(\pi^{!}L,\liederivative{s})$
--- see Section~\ref{sec:DGLAdLP} for a more precise description.

We prove the following
\begin{introthm}[Theorem~\ref{prop:QSRN0}, Theorem~\ref{thm:DGPBW}, Theorem~\ref{thm:LAHA}]
\label{introthmB}
Let $(\cL,\QL)$ be a DG Lie algebroid over a DG manifold $(\cM,Q)$.
The homological vector field $Q$ is a derivation of the algebra $\smooth{\cM}$
of smooth functions on $\cM$ turning it into a DG commutative algebra,
which we denote by $\rR=(\smooth{\cM},Q)$.
Furthermore, the operator $\QL$ acting on the space $\sections{\cL}$
of sections of the vector bundle $\cL\to\cM$ turns it into a DG module
over the DG algebra $\rR$, which we denote by $(\sections{\cL},\QL)$.
\begin{enumerate}
\item The DG $\rR$-module $(\sections{\cL},\QL)$ admits an $L_{\infty}[1]$ $\rR$-algebra
structure --- which we call Kapranov $L_{\infty}[1]$ $\rR$-algebra --- whose binary bracket
is a cocycle representative of the Atiyah class
$\atiyahclassQL\in H^{1}(\sections{\cL^{\vee}\tensor\End(\cL)},\QL)$
of the DG Lie algebroid.
\item The Kapranov $L_{\infty}[1]$ $\rR$-algebra structure on $\sections{\cL}$
is linearizable if and only if $\atiyahclassQL=0$.
\item The induced $L_{\infty}[1]$ $\KK$-algebra structure on $\sections{\cL}$
is necessarily homotopy abelian.
\end{enumerate}
\end{introthm}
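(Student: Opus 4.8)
The plan is to establish the three assertions in turn, using the linearization theorem (Theorem~\ref{introthmA}) and the homotopy transfer theorem (Theorem~\ref{thm:transfer}) as the main tools, together with an explicit Fedosov-type construction of the brackets.

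\textbf{Existence of the Kapranov $L_{\infty}[1]$ $\rR$-algebra (assertion~(1)).} I would begin by fixing an $\cL$-connection $\nabla$ on the vector bundle $\cL\to\cM$ and producing the higher brackets by a recursion. The associated Atiyah cochain is the element
\[ R_2(x)(y)=\QL(\nabla_x y)-\nabla_{\QL x}y-(-1)^{|x|}\nabla_x(\QL y) \]
of $\sections{\cL^{\vee}\tensor\End(\cL)}$, of degree $1$; one checks that it is $\QL$-closed and that replacing $\nabla$ by $\nabla+A$ changes it by $\liederivative{\QL}A$, so that its cohomology class is exactly $\atiyahclassQL$. I would then define inductively graded-symmetric tensors $R_n\in\sections{S^n\cL^{\vee}\tensor\cL}$ for $n\geq 2$, with $R_2$ the symmetrization of the Atiyah cochain and $R_{n+1}$ obtained from $\overline{\nabla}R_n$ plus correction terms quadratic in $R_2,\dots,R_n$, and set $q_1=\QL$ together with $q_n(x_1,\dots,x_n)=\langle R_n,x_1\odot\cdots\odot x_n\rangle$ for $n\geq 2$. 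The brackets $q_n$ with $n\geq 2$ are manifestly graded $\grR$-multilinear because the $R_n$ are tensors, and $(\sections{\cL},\QL)$ is a DG $\rR$-module by hypothesis. The substance of this step is the verification that the $L_{\infty}[1]$ relations for $(\sections{\cL};q_1,q_2,\dots)$ are \emph{equivalent} to the $\QL$-closedness of $R_2$ together with the defining recursion for the $R_n$; this is a sign-and-combinatorics bookkeeping argument paralleling Kapranov's original computation and its Lie-pair analogue in \cite{MR4271478}. By construction the binary bracket $q_2$ is a cocycle representative of $\atiyahclassQL$. Equivalently, one may phrase this step as transporting, along the PBW isomorphism of $\grR$-coalgebras determined by $\nabla$, the natural codifferential assembled from $\QL$ and the Chevalley--Eilenberg differential of $\cL$; I would also record that different connections yield $L_{\infty}[1]$ $\rR$-isomorphic structures.

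\textbf{Linearizability over $\rR$ (assertion~(2)).} By Theorem~\ref{introthmA}, the Kapranov $L_{\infty}[1]$ $\rR$-algebra is linearizable iff the short exact sequence \eqref{splitseq0} splits in $\dgRmod$. If $\atiyahclassQL=0$, then the Atiyah cochain of some connection is of the form $\liederivative{\QL}A$, and replacing $\nabla$ by $\nabla-A$ produces a connection whose Atiyah cochain vanishes; feeding $R_2=0$ into the recursion forces $R_n=0$ for all $n\geq 2$ by induction, so the Kapranov structure attached to this connection is literally the linear one, and by the connection-independence noted above the Kapranov $L_{\infty}[1]$ $\rR$-algebra is linearizable for every choice of connection. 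Conversely, if an $L_{\infty}[1]$ $\rR$-isomorphism $F$ onto $(\sections{\cL};\QL,0,0,\dots)$ exists, its first Taylor coefficient $F_1$ is invertible and, by the weight-$1$ component of the morphism relations, a chain automorphism of $(\sections{\cL},\QL)$; the weight-$2$ component then reads $F_1\circ q_2=\liederivative{\QL}F_2$ up to signs, exhibiting $q_2$ as a coboundary in $\sections{S^2\cL^{\vee}\tensor\cL}$, so $\atiyahclassQL=[q_2]=0$.

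\textbf{Homotopy abelianness of the underlying $L_{\infty}[1]$ $\KK$-algebra (assertion~(3)).} This is the delicate point, and it is where I expect the main obstacle. Restriction of scalars along $\KK\to\rR$ yields an $L_{\infty}[1]$ $\KK$-algebra on $\sections{\cL}$ with the same brackets, now only $\KK$-multilinear. Applying Theorem~\ref{introthmA} with base $\KK$, homotopy abelianness is equivalent to the splitting, in the category of DG $\KK$-modules (that is, cochain complexes), of the $\KK$-version of \eqref{splitseq0}; over a field this amounts to the vanishing of the connecting homomorphism $H^{\bullet}(\sections{\cL},\QL)\to H^{\bullet+1}\big(F^1\coDer_{\KK}(S_{\KK}(\sections{\cL})),\liederivative{Q}\big)$, whose leading term carries a class $[x]$ to the class of the contraction $q_2(x,-)=\langle R_2,x\rangle$. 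I would prove this vanishing by exhibiting an explicit $L_{\infty}[1]$ $\KK$-isomorphism onto the abelian algebra. Such an isomorphism is necessarily \emph{not} $\grR$-multilinear --- were it, assertion~(2) would force $\atiyahclassQL=0$ --- so it must be built from data available only after forgetting the $\grR$-structure: using Theorem~\ref{thm:transfer}, I would transfer the Kapranov $L_{\infty}[1]$ structure along an auxiliary contraction of the cochain complex $(\sections{\cL},\QL)$ onto its cohomology (a Hodge-theoretic contraction in the Dolbeault special case), and show that all transferred brackets on $H^{\bullet}(\sections{\cL},\QL)$ vanish. The crux is exactly this vanishing: the tree formulas for the transferred brackets are assembled from the $R_n$ (each contributing one unit of internal degree) and from the $\KK$-linear homotopy, and one must show that the graded symmetry of the $R_n$ together with the Bianchi-type identities built into the recursion turns every such tree sum into a coboundary. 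Granting this, the minimal model of the underlying $L_{\infty}[1]$ $\KK$-algebra is abelian, hence so is the algebra itself. Conceptually, this step is precisely the point where the non-linearizability of the Kapranov $L_{\infty}[1]$ $\rR$-algebra --- the source of the Rozansky--Witten invariants --- becomes invisible once the $\grR$-module structure, and with it the accompanying homotopical rigidity, has been discarded.
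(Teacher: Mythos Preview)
Your treatment of assertions~(1) and~(2) is essentially the paper's: the PBW description you mention \emph{is} the paper's construction, and your argument for~(2) matches Lemma~\ref{lem:Sock} and Theorem~\ref{thm:DGPBW}.

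For assertion~(3), however, there is a genuine gap. You propose transferring to a minimal model and showing that all transferred brackets vanish, but you never indicate \emph{why} they vanish; the sentence ``one must show that the graded symmetry of the $R_n$ together with the Bianchi-type identities\dots turns every such tree sum into a coboundary'' is a hope, not an argument. This is precisely the nontrivial content of the theorem, and nothing in your recursion for the $R_n$ makes it evident.

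The paper's proof is entirely different and bypasses minimal models. By Proposition~\ref{prop:S-split} it suffices to split the sequence~\eqref{splitseqDG} in DG $\KK$-modules (not DG $\rR$-modules). The key observation is that the universal enveloping algebra $\UcL$ carries a \emph{right} multiplication by $\sections{\cL}$: for $\eta\in\sections{\cL}$ set $\tilde\tau(\eta)(u)=(-1)^{|u||\eta|}u\cdot\eta$. Because the comultiplication on $\UcL$ is multiplicative, each $\tilde\tau(\eta)$ is a coderivation; because $\QU$ is a derivation of the product, $\tilde\tau$ intertwines $\QL$ and $\liederivative{\QU}$. Transporting via $\pbw^{\nabla}$ gives a $\KK$-linear DG splitting $\tau:\sections{\cL}\to\coDer_{\grR}(\ScL)$ of~\eqref{splitseqDG}. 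The reason this is only a $\KK$-splitting and not an $\rR$-splitting is transparent: $\tau(r\eta)(u)=\pm u\cdot r\cdot\eta$ differs from $r\,\tau(\eta)(u)=\pm r\cdot u\cdot\eta$ by anchor terms, since $l\cdot r-(-1)^{|l||r|}r\cdot l=\rho(l)(r)$ in $\UcL$. This single algebraic fact --- right multiplication in $\UcL$ is a $\KK$-linear coderivation compatible with $\QU$ --- is the missing idea; once you have it, no tree combinatorics or Hodge-type contractions are needed.
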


Specializing Theorem~\ref{introthmB} to the special case of
the tangent DG Lie algebroid $(T\cM,\liederivative{Q})$ of a DG manifold $(\cM,Q)$,
we recover one of the main theorems in~\cite{MR4393962}:
the space of vector fields $(\XX(\cM),\liederivative{Q})$
admits a Kapranov $L_{\infty}[1]$ $\rR$-algebra structure
whose unary bracket is the operator $\liederivative{Q}$
and whose binary bracket is a cocycle representative of the Atiyah class of the DG manifold.
Moreover, we obtain a new result: this $L_{\infty}[1]$ $\rR$-algebra structure on $\XX(\cM)$
is linearizable if and only if the Atiyah class of the DG manifold vanishes.
However, the induced $L_{\infty}[1]$ $\KK$-algebra structure on $\XX(\cM)$
is necessarily homotopy abelian.

As another application, we consider the DG Lie algebroids
$(\pi^{!}L, \liederivative{s})$ arising from Lie pairs $(L,A)$
as previously outlined. We prove the following
\begin{introthm}[Theorem~\ref{thm:LPHA}, Corollary~\ref{cor:LPHA}]
\label{introthmC}
Let $(\pi^{!}L,\liederivative{s})$ be a DG Lie algebroid
arising from a Lie pair $(L,A)$ and some appropriate section $s$.
The DG commutative algebra of functions on its base manifold $A[1]$,
which we denote by $\rR$, is the Chevalley-Eilenberg algebra
$(\Omega_{A}^{\bullet},d_{\CE})$ associated with the Lie algebroid $A$.
\begin{enumerate}
\item The Kapranov $L_{\infty}[1]$ $\rR$-algebra $\sections{\pi^{!}L}$
arising from the DG Lie algebroid $(\pi^{!}L,\liederivative{s})$
as per Theorem~\ref{introthmB} is quasi-isomorphic to
the Kapranov $L_{\infty}[1]$ $\rR$-algebra $\Omega_A(L/A)$
arising from the Lie pair $(L,A)$.
\item The Kapranov $L_{\infty}[1]$ $\rR$-algebra $\Omega_{A}(L/A)$
is linearizable if and only if the Atiyah class $\alpha_{(L,A)}$
of the Lie pair vanishes.
\item The induced $L_{\infty}[1]$ $\KK$-algebra structure on $\Omega_{A}(L/A)$
is necessarily homotopy abelian.
\end{enumerate}
\end{introthm}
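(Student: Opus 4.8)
The plan is to reduce the whole statement to Theorem~\ref{introthmB} by establishing part~(1) first; parts~(2) and~(3) are then essentially formal consequences, and Corollary~\ref{cor:LPHA} is the specialization to the Lie pair $(\tangentcc{X},\tangentzo{X})$ attached to a complex manifold $X$. So the bulk of the work is to construct an explicit $L_{\infty}[1]$ $\rR$-quasi-isomorphism between the Kapranov $L_{\infty}[1]$ $\rR$-algebra $\sections{\pi^{!}L}$ produced by Theorem~\ref{introthmB} from the DG Lie algebroid $(\pi^{!}L,\liederivative{s})$ and the Kapranov $L_{\infty}[1]$ $\rR$-algebra $\Omega_A(L/A)$ of \cite{MR4271478}, both being $L_{\infty}[1]$ algebras over the \emph{same} DG commutative algebra $\rR=(\Omega_A^{\bullet},d_{\CE})$.

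For part~(1), I would first unwind the DG Lie algebroid $(\pi^{!}L,\liederivative{s})$ over the DG manifold $(A[1],d_{\CE})$. Fixing a splitting $L=A\oplus B$ of the Lie pair, with $B\cong L/A$ as vector bundles, identifies $\sections{\pi^{!}L}$, as a graded $\rR$-module, with $\Omega_A^{\bullet}\tensor_{\smooth{M}}\big(A[1]\oplus A\oplus B\big)$: the summand $\Omega_A^{\bullet}\tensor B=\Omega_A(L/A)$ records the ``horizontal $B$-directions'', while $\Omega_A^{\bullet}\tensor(A[1]\oplus A)$ records the fibre directions of $\pi^{!}L\to A[1]$ together with the $A$-part of the fibre of $\pi^{!}L$. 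The appropriate section $s$ is designed precisely so that on the latter summand $\liederivative{s}=[s,-]$ is the acyclic ``Euler'' differential identifying the shifted copy $\Omega_A^{\bullet}\tensor A[1]$ with $\Omega_A^{\bullet}\tensor A$, and so that the differential induced on the complement $\Omega_A(L/A)$ is the Chevalley--Eilenberg differential $d_{\CE}$ of the Bott $A$-connection on $L/A$. A homological-perturbation argument then upgrades this to a contraction of DG $\rR$-modules, i.e.\ DG $\rR$-module maps $\iota\colon(\Omega_A(L/A),d_{\CE})\to(\sections{\pi^{!}L},\liederivative{s})$ and $p$ in the opposite direction with $p\iota=\id$, together with a graded $\rR$-linear homotopy $h$ of degree $-1$ satisfying the usual side conditions. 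Feeding this contraction into the homotopy transfer theorem for $L_{\infty}[1]$ $\rR$-algebras, Theorem~\ref{thm:transfer}, transports the Kapranov structure on $\sections{\pi^{!}L}$ onto $\Omega_A(L/A)$ and yields an $L_{\infty}[1]$ $\rR$-quasi-isomorphism extending $\iota$.

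It then remains to check that the transferred structure agrees, up to $L_{\infty}[1]$ $\rR$-isomorphism, with the Kapranov structure of \cite{MR4271478}. Here I would argue that, with compatible choices of auxiliary connections, the two structures come out of the same recursion: the $\rR$-coalgebra morphism underlying the \cite{MR4271478} construction is a PBW-type isomorphism, and Theorem~\ref{thm:DGPBW} identifies it with the DG PBW isomorphism for $\pi^{!}L$ composed with $\iota,p,h$. In particular the leading (binary) bracket of the transferred structure is an explicit Atiyah cocycle, and naturality of the Atiyah class under the canonical morphism relating $\pi^{!}L$ and the Lie pair $(L,A)$ identifies $\atiyahclassQL$ with $\alpha_{(L,A)}$ under the isomorphism $H^{\bullet}(\sections{\pi^{!}L},\liederivative{s})\cong H^{\bullet}(\Omega_A(L/A),d_{\CE})$. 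I expect this matching of the two explicit recursions to be the main obstacle: it is the only place where genuinely new computation, rather than formal manipulation, is required, and it demands careful bookkeeping of the splitting $L=A\oplus B$, the chosen connections, and the homotopy $h$.

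Granting part~(1), parts~(2) and~(3) follow. By Theorem~\ref{introthmB}(2), $\sections{\pi^{!}L}$ is linearizable as an $L_{\infty}[1]$ $\rR$-algebra iff $\atiyahclassQL=0$; since the quasi-isomorphism of part~(1) arises from a contraction, Theorem~\ref{thm:transfer} together with the linearization criterion of Theorem~\ref{introthmA} allows one to transport linearizability --- equivalently, a splitting of the exact sequence~\eqref{splitseq0} in DG $\rR$-modules --- in either direction, so $\Omega_A(L/A)$ is linearizable iff $\atiyahclassQL=0$ iff $\alpha_{(L,A)}=0$, which is part~(2). By Theorem~\ref{introthmB}(3) the induced $L_{\infty}[1]$ $\KK$-algebra on $\sections{\pi^{!}L}$ is homotopy abelian; restriction of scalars along $\KK\to\rR$ turns the $L_{\infty}[1]$ $\rR$-quasi-isomorphism of part~(1) into an $L_{\infty}[1]$ $\KK$-quasi-isomorphism, and homotopy abelianness is invariant under $L_{\infty}[1]$ $\KK$-quasi-isomorphism in characteristic zero, so the induced $L_{\infty}[1]$ $\KK$-algebra on $\Omega_A(L/A)$ is homotopy abelian, which is part~(3). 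Finally, Corollary~\ref{cor:LPHA} is obtained by specializing to the Lie pair $(\tangentcc{X},\tangentzo{X})$ of a complex manifold $X$: there $\Omega_A(L/A)=\Omega^{0,\bullet}_X(T^{1,0}_X)$, $\rR=(\Omega^{0,\bullet}_X,\bar{\partial})$, $d_{\CE}=\bar{\partial}$, and $\alpha_{(L,A)}$ is the Atiyah class of $T_X$, so the statement recovers Kapranov's $L_{\infty}[1]$ algebra and the homotopy-abelianness result of \cite{MR3579974,MR3622306} while pinpointing $\rR$-linearizability as equivalent to the vanishing of the Atiyah class.
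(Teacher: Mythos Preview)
Your route to part~(1) is genuinely different from the paper's, and it leaves the hardest step open. You propose homotopy transfer along a contraction $(\iota,p,h)$ of $(\sections{\pi^!L},\liederivative{s})$ onto $(\Omega_A(B),d^{\Bott})$, followed by a verification that the transferred structure on $\Omega_A(B)$ agrees (up to isomorphism) with the Lie pair Kapranov structure of~\cite{MR4271478}. You rightly flag this matching of two separate PBW-type recursions as ``the main obstacle,'' but offer no concrete mechanism for it; the reference to Theorem~\ref{thm:DGPBW} does not help here, since that theorem concerns linearizability, not identification of transferred structures.

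The paper bypasses this matching entirely by working one level up, at the universal enveloping algebras. There is a natural $\Omega_A$-linear map
\[
P_\cU:\cU(\pi^!L)\longrightarrow\Omega_A\otimes_R\frac{\cU(L)}{\cU(L)\cdot\sections{A}},
\qquad (Y_1,v_1\circ\pi)\cdots(Y_n,v_n\circ\pi)\longmapsto\overline{v_1\cdots v_n},
\]
which by~\cite{StienonVitaglianoXu} is a morphism of DG $\rR$-coalgebras. Conjugating by the two PBW isomorphisms $\pbw^\nabla$ and $\underline{\pbw}$ produces a DG $\rR$-coalgebra morphism $P_\cU^{\pbw}$ from $(\sections{S\pi^!L},\QU^{\pbw})$ to $(\Omega_A(SB),d^{\lightning})$. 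Since the two Kapranov $L_\infty[1]$ $\rR$-structures are \emph{defined} as the structures encoded by these very coalgebra differentials, $P_\cU^{\pbw}$ is automatically an $L_\infty[1]$ $\rR$-morphism between the given Kapranov structures --- no comparison of recursions is needed. Its linear part is the projection $P_0:\sections{\pi^!L}\to\Omega_A(B)$, already known to be a quasi-isomorphism, and part~(1) follows. What this buys is precisely the elimination of your obstacle: the coalgebra map exists before any choice of contraction homotopy, so there is nothing to match.

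Your arguments for~(2) and~(3) are reasonable once~(1) is in hand, with the caveat that your transfer-of-linearizability step in~(2) again presupposes the identification of the transferred structure with the Lie pair Kapranov structure, so it inherits the gap from~(1). The paper instead couples Liao's identification $\alpha_{(\pi^!L,\liederivative{s})}\leftrightarrow\alpha_{(L,A)}$ from~\cite{MR4665716} with Lemma~\ref{lem:Sock} and the PBW description of the Lie pair structure. One small correction: Corollary~\ref{cor:LPHA} \emph{is} the general Lie pair statement (parts~(2) and~(3) here), not the complex-manifold specialization; the latter is the subsequent corollary.
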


In the particular case of the Lie pair $(L,A):=(\tangentcc{X},\tangentzo{X})$
arising from a complex manifold $X$, the pull-back DG Lie algebroid
$\pi^{!}L\to A[1]$ is exactly the tangent DG Lie algebroid
$\big(T(\tangentzo{X}[1]), \liederivative{\bar{\partial}}\big)$
of the DG manifold $(\tangentzo{X}[1],\bar{\partial})$.
The Atiyah class of this Lie pair is known to coincide with
the Atiyah class of the holomorphic tangent bundle $T_X$
(i.e.\ the original one defined by Atiyah in~\cite{MR86359}) --- see~\cite{MR3439229}.
Consequently, we have the following
\begin{introthm}[Corollary~\ref{cor:complexmanifold}]
\label{introthmD}
Let $X$ be a complex manifold, and let $\rR$ be
the DG commutative algebra $(\Omega_{X}^{0,\bullet},\bar{\partial})$.
\begin{enumerate}
\item The space of vector fields $\XX(\tangentzo{X}[1])$ on the DG manifold
$(\tangentzo{X}[1],\bar{\partial})$ admits a Kapranov $L_{\infty}[1]$ $\rR$-algebra structure
whose unary bracket is the operator
$\liederivative{\bar{\partial}}$ and whose binary bracket is a cocycle representative
of the Atiyah class of the DG manifold $(\tangentzo{X}[1],\bar{\partial})$.
\item The Dolbeault complex $(\Omega_{X}^{0,\bullet}(\tangentoz{X}),\bar{\partial})$
admits a Kapranov $L_{\infty}[1]$ $\rR$-algebra structure
whose binary bracket is a cocycle representative of the Atiyah class
(in Dolbeault cohomology) of the holomophic tangent bundle $T_X$.
\item The Kapranov $L_{\infty}[1]$ $\rR$-algebras in (1) and (2) are quasi-isomorphic.
\item The Kapranov $L_{\infty}[1]$ $\rR$-algebra on $\Omega_{X}^{0,\bullet}(\tangentoz{X})$
is linearizable if and only if the Atiyah class of the holomophic tangent bundle $T_X$ vanishes.
\item However, the induced $L_{\infty}[1]$ $\CC$-algebra structure
on $\Omega_{X}^{0,\bullet}(\tangentoz{X})$ is necessarily homotopy abelian.
\end{enumerate}
\end{introthm}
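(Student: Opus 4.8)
The plan is to deduce the statement directly from Theorems~\ref{introthmB} and~\ref{introthmC} by specializing the general machinery to the Lie pair $(L,A):=(\tangentcc{X},\tangentzo{X})$ attached to the complex manifold $X$, together with two identifications already available: that the pull-back DG Lie algebroid $\pi^{!}L\to A[1]$ coincides with the tangent DG Lie algebroid $\big(T(\tangentzo{X}[1]),\liederivative{\bar{\partial}}\big)$ of the DG manifold $(\tangentzo{X}[1],\bar{\partial})$ --- recalled in the introduction and established in Section~\ref{sec:DGLAdLP} --- and that the Atiyah class $\alpha_{(L,A)}$ of this Lie pair agrees with the classical Atiyah class of the holomorphic tangent bundle $T_X$, which is~\cite{MR3439229}. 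Granting these, the corollary is pure bookkeeping.

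First I would record the translation of the ambient data. The Chevalley--Eilenberg algebra $(\Omega_A^\bullet,d_{\CE})$ of the Lie algebroid $A=\tangentzo{X}$ is precisely the Dolbeault DG commutative algebra $\rR=(\Omega_X^{0,\bullet},\bar{\partial})$, and the Bott $A$-connection on $L/A\cong\tangentoz{X}$ has Chevalley--Eilenberg complex $(\Omega_X^{0,\bullet}(\tangentoz{X}),\bar{\partial})$; that is, $\Omega_A(L/A)=\Omega_X^{0,\bullet}(\tangentoz{X})$ as a DG $\rR$-module. This places us in exactly the setting of Theorem~\ref{introthmC}. Part~(2) is then Theorem~\ref{introthmB}(1) applied to $(\pi^{!}L,\liederivative{s})$ via Theorem~\ref{introthmC}(1): the Kapranov $L_\infty[1]$ $\rR$-algebra built from the Lie pair lives on $\Omega_X^{0,\bullet}(\tangentoz{X})$ with unary bracket $\bar{\partial}$ and binary bracket a cocycle representative of $\alpha_{(L,A)}$, which by~\cite{MR3439229} is a Dolbeault cocycle representative of the Atiyah class of $T_X$. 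Part~(1) is the same Theorem~\ref{introthmB}(1) applied instead to the tangent DG Lie algebroid of $(\tangentzo{X}[1],\bar{\partial})$, which (after the identification $\pi^{!}L\cong T(\tangentzo{X}[1])$) has underlying DG $\rR$-module $(\XX(\tangentzo{X}[1]),\liederivative{\bar{\partial}})$ and binary bracket a cocycle representative of the Atiyah class of the DG manifold $(\tangentzo{X}[1],\bar{\partial})$ --- precisely the specialization already noted immediately after Theorem~\ref{introthmB}.

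The remaining items are then immediate. Part~(3) is verbatim Theorem~\ref{introthmC}(1): the Kapranov $L_\infty[1]$ $\rR$-algebra arising from $\pi^{!}L$ is quasi-isomorphic to the one arising from the Lie pair $(L,A)$. Part~(4) is Theorem~\ref{introthmC}(2), which says the Kapranov $L_\infty[1]$ $\rR$-algebra $\Omega_A(L/A)$ is linearizable if and only if $\alpha_{(L,A)}=0$, combined once more with the identification of $\alpha_{(L,A)}$ with the Atiyah class of $T_X$. Part~(5) is Theorem~\ref{introthmC}(3) (equivalently Theorem~\ref{introthmB}(3)): restricting scalars along the unit morphism $\CC\to\rR$ of DG commutative algebras turns the Kapranov $L_\infty[1]$ $\rR$-algebra into an $L_\infty[1]$ $\CC$-algebra which is homotopy abelian, independently of whether the Atiyah class vanishes.

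The only point demanding genuine care --- rather than quotation --- is making sure the specialization is the intended one: namely that the degree-$1$ section $s$ of $\pi^{!}L\to A[1]$ produced by this particular Lie pair is the one whose Lie derivative $\liederivative{s}$ equals $\liederivative{\bar{\partial}}$, so that $(\pi^{!}L,\liederivative{s})$ really is the tangent DG Lie algebroid of $(\tangentzo{X}[1],\bar{\partial})$. This unwinding of definitions is carried out in Section~\ref{sec:DGLAdLP}, and it is the one place I would double-check indices and signs; no new argument beyond that and the two cited facts is needed.
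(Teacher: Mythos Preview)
Your proposal is correct and follows essentially the same approach as the paper: the corollary is deduced by specializing Theorem~\ref{introthmC} (equivalently Theorem~\ref{thm:LPHA} and Corollary~\ref{cor:LPHA}) to the Lie pair $(\tangentcc{X},\tangentzo{X})$, invoking the identification of $\pi^{!}L$ with the tangent DG Lie algebroid of $(\tangentzo{X}[1],\bar{\partial})$ and the identification from~\cite{MR3439229} of $\alpha_{(L,A)}$ with the classical Atiyah class of $T_X$. The paper presents this as an immediate corollary with exactly these two ingredients cited, so there is nothing to add.
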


Note that the first three assertions of Theorem~\ref{introthmD}
were already known \cite{MR4393962,MR4271478}.

\section{\texorpdfstring{$L_{\infty}[1]$}{L-infinity[1]} algebras
over a DG commutative algebra base}

\subsection{Definition}

Throughout this section,
we work over a field $\KK$ of characteristic zero.
We refer to~\cite{MR4485797} for general facts about $L_{\infty}$
algebras defined over $\KK$.
In this section, we introduce
a generalization of
$L_{\infty}$ algebras involving DG modules over
a differential graded commutative algebra (\DGCA).
All \DGCAs{} are assumed to be unital and over $\KK$.
To simplify signs, we shall shift degrees and work
in the isomorphic category of $L_{\infty}[1]$ algebras.

Let $\rR=(\grR,d_{\grR})$ be a \DGCA. A DG $\rR$-module
is a graded $\grR$-module $\LL$ with a differential $d_{\LL}$
satisfying the Leibniz rule $d_{\LL}(rx)=d_{\grR}(r)x+(-1)^{i}rd_{\LL}(x)$
for all $x\in \LL$ and all $r\in \grR^i$,
where $\grR^{i}$ denotes the degree $i$ component of $\grR$.

\begin{definition}
An $L_{\infty}[1]$ $\rR$-algebra is a DG $\rR$-module $(\LL, d_{\LL})$
together with a sequence of graded symmetric
graded $\grR$-multilinear maps of degree $1$
\[q_n:\nst{\LL}{\grR}{n}:=\overbrace{\LL\odot_{\grR} \cdots\odot_{\grR} \LL}^n
\to \LL,\qquad n\geq 2,\]
satisfying the following identities (where $q_1:=d_{\LL}$)
\begin{equation}\label{eq:Loo}
\sum_{i=1}^n\sum_{\sigma\in \shuffle{i}{n-i}}\sign(\sigma)\cdot
q_{n-i+1}(q_i(x_{\sigma(1)},\ldots,x_{\sigma(i)}),x_{\sigma(i+1)},\ldots,x_{\sigma(n)})=0
,\end{equation}
for all $n\ge2$ and $x_1,\ldots,x_n\in \LL$. Here $\sign(\sigma)$
is the Koszul sign defined by the identity \linebreak
$x_{\sigma(1)}\odot\cdots\odot x_{\sigma(n)}= \sign(\sigma) \,x_1\odot\cdots\odot x_n$
in the $n$-th symmetric power $\nst{\LL}{\grR}{3}$.
\end{definition}

Although the differential $d_{\LL}$ is not $\grR$-linear,
it is a simple computation using Leibniz rule that
$d_{\LL}$ extends to its $\grR$-dual and their $\grR$-tensors,
making it into a well-defined differential on $\nst{\LL^{\vee}}{\grR}{2} \otimes_\grR \LL$.
Moreover, the element $q_2\in \nst{\LL^{\vee}}{\grR}{2} \otimes_\grR \LL$
is a $1$-cocycle with respect to $d_{\LL}$. Thus it induces a well-defined element
\begin{equation}\label{eq:q2}
[q_2]\in H^1 \big( \nst{\LL^{\vee}}{\grR}{2} \otimes_\grR \LL, d_{\LL}\big).
\end{equation}

The following proposition can be easily verified from the definition.

\begin{proposition}
\label{pro:Gare-Nord}
Let $(\LL, d_{\LL},q_2,q_3,\ldots)$ be
an $L_{\infty}[1]$ $\rR$-algebra.
\begin{enumerate}
\item The map $q_2: \LL\odot_{\grR}\LL\to \LL$ induces a well-defined
$\grR$-bilinear map
\[ H^i(V, d_{\LL})\otimes H^j(V, d_{\LL})\to H^{i+j+1}(V, d_{\LL}) ,\]
making it into a graded Lie$[1]$-$\grR$ algebra.
\item The element in $H^2 \big(\nst{\LL^{\vee}}{\grR}{3} \otimes_\grR \LL, d_{\LL}\big)$
defined by the map $\nst{\LL}{\grR}{3} \to \LL$:
\[ x\odot y\odot z\mapsto q_2(q_2(x,y),z)
+(-1)^{\degree{x}(\degree{y}+\degree{z})}q_2(q_2(y,z),x)
+(-1)^{\degree{z}(\degree{x}+\degree{y})}q_2(q_2(z,x),y) \]
vanishes.
\end{enumerate}
\end{proposition}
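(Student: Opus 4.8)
The plan is to derive both statements directly from the $L_\infty[1]$ identities~\eqref{eq:Loo} in low arity, treating the $\grR$-multilinearity as a formal bookkeeping device that behaves exactly as in the classical ($\grR = \KK$) case. The only genuine subtlety is that $q_1 = d_{\LL}$ is not $\grR$-linear, so one must check that the maps being written down descend to cohomology in the $\grR$-linear sense — but the Leibniz rule for $d_{\LL}$, together with the $\grR$-multilinearity of $q_2$, makes this automatic, just as in the remark preceding the statement about the complex $\nst{\LL^\vee}{\grR}{2}\otimes_\grR\LL$.

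For part~(1), I would first expand the $n=2$ identity, which reads $q_1 q_2(x,y) + q_2(q_1 x, y) + (-1)^{\degree x} q_2(x, q_1 y) = 0$ after inserting the Koszul signs from the shuffles; up to the degree shift this says precisely that $q_2$ is a chain map $\LL\odot_\grR\LL \to \LL$ with respect to the tensor-product differential induced by $q_1$. Hence $q_2$ sends $q_1$-cocycles to $q_1$-cocycles and $q_1$-coboundaries (in either slot, using the leftover term) to $q_1$-coboundaries, so it induces a well-defined $\grR$-bilinear map on cohomology $H^i \otimes H^j \to H^{i+j+1}$. Graded symmetry of $q_2$ descends immediately. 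The graded Jacobi identity on cohomology is then part~(2): the $n=3$ identity~\eqref{eq:Loo}, expanded over the shuffles in $\shuffle{1}{2}$, $\shuffle{2}{1}$, and $\shuffle{3}{0}$, gives
\[
q_1 q_3(x,y,z) \;+\; \big(\text{terms } q_3(q_1 x, y, z) + \cdots\big) \;+\; \big(q_2(q_2(x,y),z) + (\text{cyclic permutations with Koszul signs})\big) = 0.
\]
Thus the "Jacobiator" map $x\odot y\odot z \mapsto q_2(q_2(x,y),z) + \cdots$ equals $-q_1 q_3(x,y,z) - q_3(q_1 x, y, z) - \cdots$, i.e.\ it is exactly $\pm$ the differential of $q_3$ (viewed as an element of $\nst{\LL^\vee}{\grR}{3}\otimes_\grR\LL$) with respect to $d_{\LL}$, where the differential on that $\grR$-module is the one canonically induced by $q_1$ via the Leibniz rule. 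Since $q_3$ is $\grR$-multilinear, it is a genuine element of $\nst{\LL^\vee}{\grR}{3}\otimes_\grR\LL$, so the Jacobiator is a coboundary there and its class in $H^2$ vanishes; this is statement~(2), and feeding it back into part~(1) shows the induced bracket on cohomology satisfies graded Jacobi, completing the proof that $(H^\bullet(\LL,d_{\LL}), [q_2])$ is a graded Lie$[1]$-$\grR$ algebra.

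The one step requiring a little care — the "main obstacle," though it is mild — is verifying that the Jacobiator, a priori only a graded symmetric $\KK$-multilinear map, actually lies in the $\grR$-submodule $\nst{\LL^\vee}{\grR}{3}\otimes_\grR\LL$ and that its expression as $d_{\LL}(q_3)$ is consistent with the $\grR$-module structure on the latter. This follows because each term $q_2(q_2(-,-),-)$ is a composite of $\grR$-multilinear maps and hence $\grR$-multilinear, and because the differential on $\nst{\LL^\vee}{\grR}{3}\otimes_\grR\LL$ built from $q_1$ via Leibniz is exactly the one appearing when one regroups the $q_3$-terms of the $n=3$ identity — the non-$\grR$-linearity of $q_1$ is precisely compensated by the Leibniz correction terms, as already noted in the paragraph preceding Proposition~\ref{pro:Gare-Nord}. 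Everything else is the standard low-arity unwinding of~\eqref{eq:Loo} together with careful tracking of Koszul signs, which I would not reproduce in detail.
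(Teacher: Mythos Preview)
Your proposal is correct and follows exactly the approach the paper takes: the paper simply states that the proposition ``can be easily verified from the definition,'' and your unwinding of the $n=2$ and $n=3$ instances of identity~\eqref{eq:Loo} is precisely that verification. Your remarks about the $\grR$-multilinearity of the Jacobiator and the well-definedness of the induced differential on $\nst{\LL^\vee}{\grR}{3}\otimes_\grR\LL$ are the only points requiring any care, and you handle them correctly.
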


\begin{definition}
An $L_{\infty}[1]$ $\rR$-morphism
\[F=(f_1,f_2,\ldots):(\LL, d_{\LL},q_2,q_3,\ldots)\to(\MM, d_{\MM},r_2,r_3,\ldots)\]
between $L_{\infty}[1]$ $\rR$-algebras $\LL$ and $\MM$ is a collection
of graded symmetric graded $\grR$-multilinear maps of degree $0$
\[ f_n:\nst{\LL}{\grR}{n} \to \MM,\qquad n\ge1 ,\]
satisfying the identities (where $q_1:=d_{\LL}$, $r_1:=d_{\MM}$)
\begin{multline}\label{eq:LooMor}
\sum_{k=1}^n\sum_{i_1+\cdots+i_k=n}\frac{1}{k!}
\sum_{\sigma\in\Nshuffle{i_1}{i_{2}}{i_{k}}}\sign(\sigma)\cdot
r_k(f_{i_1}(x_{\sigma(1)},\ldots),\ldots,f_{i_k}(\ldots,x_{\sigma(n)}))=
\\= \sum_{j=1}^n\sum_{\sigma\in\shuffle{j}{n-j}}\sign(\sigma)\cdot
f_{n-j+1}(q_j(x_{\sigma(1)},\ldots,x_{\sigma(j)}),x_{\sigma(j+1)},\ldots,x_{\sigma(n)})
\end{multline}
for all $n\ge1$ and $x_1,\ldots,x_n\in\LL$.
\end{definition}

The composition of $L_{\infty}[1]$ $\rR$-morphisms is defined by the same formula
as in the usual $L_{\infty}[1]$ algebras. See, for example, \cite{MR4485797}.
In particular, a completely standard argument shows that a morphism of $L_{\infty}[1]$
$\rR$-algebras $F$ is an isomorphism if and only if so is its linear component $f_1$.

With these definitions, $L_{\infty}[1]$ $\rR$-algebras
with $L_{\infty}[1]$ $\rR$-morphisms form a category, which we denote by $\LRA$.

When $n=1$, Equation~\eqref{eq:LooMor} becomes $d_{\MM}( f_{1}(x))=f_{1}( d_{\LL}(x))$
for $x\in \LL$. In other words, the linear component $f_{1}$ of the $L_{\infty}[1]$
$\rR$-morphism $F$ is a morphism of DG $\rR$-modules.
In particular, $f_{1}:(\LL, d_{\LL})\to (\MM, d_{\MM})$ is a cochain map.

\begin{definition}
An $L_{\infty}[1]$ $\rR$-morphism $F:(\LL,d_{\LL},q_2,q_3,\ldots)\to(\MM,d_{\MM},r_2,r_3,\ldots)$
is called an $L_{\infty}[1]$ $\rR$-quasi-isomorphism if the linear component
$f_{1}:(\LL, d_{\LL})\to (\MM, d_{\MM})$ is a quasi-isomorphism.
\end{definition}

Analogous to the classical case, we define the following:

\begin{definition}
\begin{enumerate}
\item
An $L_{\infty}[1]$ $\rR$-algebra $(\LL, d_{\LL},q_{2},q_{3},\ldots)$
is called abelian if $q_{k}=0$ for all $k\geq 2$.
\item
An $L_{\infty}[1]$ $\rR$-algebra $(\LL, d_{\LL},q_{2},q_{3},\ldots)$
is called $\rR$-linearizable if it is $L_{\infty}[1]$ $\rR$-isomorphic
to $(\LL, d_{\LL}, 0, 0, 0,\ldots)$.
\item
An $L_{\infty}[1]$ $\rR$-algebra $(\LL, d_{\LL},q_{2},q_{3},\ldots)$
is called $\rR$-homotopy abelian if it is $L_{\infty}[1]$ $\rR$-quasi-isomorphic
to an abelian $L_{\infty}[1]$ $\rR$-algebra.
\end{enumerate}
\end{definition}

\begin{lemma}
\label{lem:Sock}
Assume that
an $L_{\infty}[1]$ $\rR$-algebra $(\LL, d_{\LL},q_{2},q_{3},\ldots)$ is $\rR$-linearizable.
Then
\begin{equation}
\label{eq:q20}
[q_2]=0\in H^1 \big(\nst{\LL^{\vee}}{\grR}{2} \otimes_\grR \LL, d_{\LL}\big).
\end{equation}
\end{lemma}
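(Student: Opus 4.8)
The plan is to extract an explicit DG-module cocycle identity from the lowest-degree pieces of the $L_\infty[1]$ $\rR$-isomorphism, exactly as in the classical case. Suppose $F=(f_1,f_2,f_3,\ldots)$ is an $L_\infty[1]$ $\rR$-isomorphism from $(\LL,d_\LL,q_2,q_3,\ldots)$ to the abelian algebra $(\LL,d_\LL,0,0,\ldots)$. Since $F$ is an isomorphism, its linear component $f_1$ is an isomorphism of DG $\rR$-modules; replacing $F$ by $F\circ(f_1^{-1},0,0,\ldots)$, I may assume without loss of generality that $f_1=\id_\LL$. First I would write down Equation~\eqref{eq:LooMor} for this $F$ in the case $n=2$. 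On the left-hand side, because all higher brackets $r_k$ of the target vanish, only the $k=1$ term survives, contributing $r_1(f_2(x,y))=d_\LL\bigl(f_2(x,y)\bigr)$. On the right-hand side, the terms are $f_2(q_1(x),y)\pm f_2(x,q_1(y))$ coming from $j=1$ together with $f_1(q_2(x,y))=q_2(x,y)$ coming from $j=2$. Collecting signs, this reads
\[
q_2(x,y) \;=\; d_\LL\bigl(f_2(x,y)\bigr) \;-\; f_2\bigl(d_\LL(x),y\bigr)\;-\;(-1)^{\degree{x}}f_2\bigl(x,d_\LL(y)\bigr),
\]
i.e.\ $q_2 = d_\LL(f_2)$ where $d_\LL$ on the right denotes the induced differential on $\nst{\LL^\vee}{\grR}{2}\otimes_\grR\LL$ described in the paragraph following Proposition~\ref{pro:Gare-Nord}.

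The crucial point to check is that $f_2$ is genuinely an element of $\nst{\LL^\vee}{\grR}{2}\otimes_\grR\LL$, so that the displayed identity is an honest statement in that complex: this is immediate since $f_2\colon\nst{\LL}{\grR}{2}\to\LL$ is, by definition of an $L_\infty[1]$ $\rR$-morphism, graded symmetric and graded $\grR$-bilinear, of degree $0$. It also has to be verified that the sign-twisted differential appearing in the $n=2$ morphism identity matches the differential on $\nst{\LL^\vee}{\grR}{2}\otimes_\grR\LL$ used in \eqref{eq:q2}; this is the routine sign bookkeeping already implicit in the remark preceding \eqref{eq:q2}, and amounts to the Leibniz rule for $d_\LL$ extended to duals and tensors. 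Granting this, the identity $q_2=d_\LL(f_2)$ exhibits $q_2$ as a coboundary, hence $[q_2]=0$ in $H^1\bigl(\nst{\LL^\vee}{\grR}{2}\otimes_\grR\LL,d_\LL\bigr)$, which is \eqref{eq:q20}.

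The only mild obstacle is the normalization step $f_1=\id$: one must be sure that pre- or post-composing with the linearization isomorphism $(f_1,0,0,\ldots)$ does not change the cohomology class of the binary bracket. But an $L_\infty[1]$ $\rR$-isomorphism with vanishing higher components and linear part $\psi$ sends a binary bracket $q_2$ to the bracket $\psi^{-1}\circ q_2\circ(\psi\otimes\psi)$ (the usual transport of structure), and $\psi$ being a cochain isomorphism of DG $\rR$-modules induces an isomorphism on the complex $\nst{\LL^\vee}{\grR}{2}\otimes_\grR\LL$ carrying the class $[q_2]$ to the class of the transported bracket; so vanishing of one class is equivalent to vanishing of the other. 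Alternatively — and this avoids the normalization entirely — one can run the $n=2$ identity for a general $F$ with $f_1$ possibly nontrivial and read off $f_1\circ q_2 = d_\LL(f_2)$ as an equality in $\nst{\LL^\vee}{\grR}{2}\otimes_\grR\MM$ with $\MM=\LL$, then apply the cochain isomorphism $f_1^{-1}$ to conclude $[q_2]=0$ directly. Either way the argument is short and the substance is entirely contained in the $n=2$ component of \eqref{eq:LooMor}.
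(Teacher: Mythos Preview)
Your proof is correct and follows essentially the same route as the paper: extract the $n=2$ component of \eqref{eq:LooMor} to obtain $f_1\circ q_2=d_{\LL}(f_2)$, then use invertibility of $f_1$ to write $q_2=d_{\LL}(f_1^{-1}\circ f_2)$ --- this ``alternative'' you sketch at the end is exactly the paper's argument. One small slip: to normalize $f_1=\id_\LL$ while keeping the source algebra fixed you should \emph{post}-compose with $(f_1^{-1},0,0,\ldots):(\LL,d_\LL,0,\ldots)\to(\LL,d_\LL,0,\ldots)$ rather than pre-compose, since the abelian target is preserved under transport by a linear isomorphism whereas the source need not be.
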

\begin{proof}
Let $F=(f_{1}, f_{2},\ldots):(\LL, d_{\LL},q_{2},q_{3},\ldots)\to (\LL, d_{\LL},0,0,0,\ldots)$
be an isomorphism of $L_{\infty}[1]$ $\rR$-algebras.
Then Equation~\eqref{eq:LooMor}, for $n=2$, implies that, for $x,y\in \LL$,
\[ d_{\LL}(f_{2})(x\odot y):=d_{\LL} (f_{2}(x\odot y))-f_{2}(d_{\LL}(x)\odot y
+(-1)^{\degree{x}} x\odot d_{\LL}(y))=f_{1} (q_{2}(x\odot y)) .\]
Since $F$ is an isomorphism, the cochain map $f_{1}:(\LL,d_{\LL}) \to (\MM,d_{\MM})$
has the inverse $f_{1}^{-1}$. Thus, we have
\[ q_{2}= f_{1}^{-1}\circ f_{1} \circ q_{2}=f_{1}^{-1}\circ d_{\LL}(f_{2})
=d_{\LL}(f_{1}^{-1}\circ f_{2}) ,\]
and it hence implies that $[q_{2}]=0$.
\end{proof}

\begin{remark}
When $\rR=\KK$ is the base field, then $\KK$-linearizable and $\KK$-homotopy abelian
are equivalent notion --- see for instance \cite{MR3622306}.
However, over an arbitrary \DGCA{} $\rR$, although $\rR$-linearizability
implies $\rR$-homotopy abelianness, the converse does not hold in general.
\end{remark}

Given a graded $\grR$-module $\LL$, we denote by
\begin{equation}\label{S_R}
S_{\grR}(\LL) := \grR\oplus \Big(\bigoplus_{n=1}^{\infty} \nst{\LL}{\grR}{n} \Big)
=\grR\oplus \overline{S}_{\grR}(\LL)
.\end{equation}
When equipped with the comultiplication
$\Delta: S_{\grR}(\LL)\to S_{\grR}(\LL)\otimes_{\grR} S_{\grR}(\LL)$
defined by $\Delta(1_{\grR}) = 1_{\grR}\otimes 1_{\grR}$ and
\begin{multline}\label{eq:Delta}
\Delta(x_1\odot\cdots\odot x_n) = 1_{\grR}\otimes(x_1\odot\cdots\odot x_n)
+(x_1\odot\cdots\odot x_n)\otimes 1_{\grR} \,+ \\
+\sum_{i=1}^{n-1}\sum_{\sigma\in\shuffle{i}{n-1}}\sign(\sigma)\cdot (x_{\sigma(1)}
\odot\cdots\odot x_{\sigma(i)})\otimes(x_{\sigma(i+1)}\odot\cdots\odot x_{\sigma(n)}),
\end{multline}
and the counit $\eta:S_{\grR}(\LL)\to \grR$ and coaugmentation
$\epsilon:\grR\to S_{\grR}(\LL)$ coming from the splitting \eqref{S_R},
the quadruple $(S_{\grR}(\LL),\Delta,\eta,\epsilon)$ is the cofree counital coaugmented
graded cocommutative coalgebra cogenerated by $\LL$ in the symmetric monoidal category
of graded $\grR$-modules (that's a mouthful!).

An $\grR$-coderivation $\coderQ:S_{\grR}(\LL)\to S_{\grR}(\LL)$ is an $\grR$-linear map
$\coderQ$ making the following diagram
\begin{equation}\label{diag:R-coDer}
\begin{tikzcd}
S_{\grR}(\LL) \arrow{rr}{\coderQ} \arrow{d}{\Delta} && S_{\grR}(\LL) \arrow{d}{\Delta} \\
S_{\grR}(\LL) \tensor_{\grR}S_{\grR}(\LL) \arrow{rr}{\coderQ\tensor \id + \id\tensor \coderQ}
&& S_{\grR}(\LL) \tensor_{\grR}S_{\grR}(\LL)
\end{tikzcd}
\end{equation}
commutative. It is completely determined by its corestriction
\[ p\coderQ=: q = (q_0,q_1,q_2,\ldots)\in\Hom^{\bullet}_{\grR}(S_{\grR}(\LL), \LL)
\cong\prod_{n\ge0}\Hom^{\bullet}_{\grR}(\nst{\LL}{\grR}{n}, \LL) ,\]
where $p:S_{\grR}(\LL)\to \LL$ is the projection coming from the splitting \eqref{S_R}
(and where $\nst{\LL}{\grR}{0}:= \grR$). Explicitly, we can reconstruct $\coderQ$
from $q$ via the following formula
\[ \coderQ(1_{\grR}) = q_0(1_{\grR})\in \LL\subset S_{\grR}(\LL) ,\]
and for $n\geq 1$ and $x_1,\ldots,x_n\in \LL$,
\begin{multline}\label{eq:coder}
\coderQ(x_1\odot\cdots \odot x_n) = q_0(1_{\grR})\odot x_1\odot\cdots\odot x_n +
\\+ \sum_{i=1}^n\sum_{\sigma\in \shuffle{i}{n-i}}\sign(\sigma)\cdot
q_i(x_{\sigma(1)},\ldots,x_{\sigma(i)})\odot x_{\sigma(i+1)}\odot\cdots\odot x_{\sigma(n)}.
\end{multline}
We denote by $\coDer_{\grR}(S_{\grR}(\LL))$ the graded Lie algebra of $\grR$-coderivations
of $S_{\grR}(\LL)$ with the graded commutator bracket, and by
\begin{equation}\label{eq:Fp}
F^p\coDer_{\grR}(S_{\grR}(\LL))\subset \coDer_{\grR}(S_{\grR}(\LL))
\end{equation}
the graded Lie subalgebra of coderivations $\coderQ$ satisfying $q_0=q_1=\cdots=q_{p-1}=0$.
Thus we have the following filtration of graded Lie algebras:
\[ \coDer_{\grR}(S_{\grR}(\LL))\supset F^0\coDer_{\grR}(S_{\grR}(\LL))
\supset F^1\coDer_{\grR}(S_{\grR}(\LL)) \supset \cdots F^p\coDer_{\grR}(S_{\grR}(\LL))
\supset \cdots \]

If $(\LL, d_{\LL})$ is a DG $\rR$-module, there is an induced map
$\widetilde{d_{\LL}}:S_{\grR}(\LL)\to S_{\grR}(\LL)$ defined by
$\widetilde{d_{\LL}}(r)=d_{\grR}(r)$ if $r\in \grR \subset S_{\grR}(\LL)$
and for $n\geq 1$ and $x_1,\ldots,x_n\in \LL$,

\begin{equation} \label{eq:dL}
\widetilde{d_{\LL}}(x_1\odot\cdots\odot x_n) = \sum_{\sigma\in \shuffle{1}{n-1}}
\sign(\sigma)\cdot d_{\LL}(x_{\sigma(1)})\odot x_{\sigma(2)}\odot\cdots\odot x_{\sigma(n)}.
\end{equation}
This is a ``coderivation'' of $S_{\grR}(\LL)$, meaning that $\widetilde{d_{\LL}}$
makes the diagram \eqref{diag:R-coDer} commutative by taking the role of $\coderQ$,
but it's not $\grR$-linear. Hence it is not an element of $\coDer_{\grR}(S_{\grR}(V))$:
in fact, it is not even immediately clear that both $\widetilde{d_{\LL}}$
and $\widetilde{d_{\LL}}\tensor \id + \id\tensor \widetilde{d_{\LL}}$ are well-defined
(as all tensor products are taken over $\grR$, and $d_{\LL}$ is not $\grR$-linear).
Nevertheless it's a simple computation using Leibniz rule.
Moreover, it is easy to show that $(\widetilde{d_{\LL}})^{2}=0$
and that $(S_{\grR}(\LL),\widetilde{d_{\LL}})$ is a DG $\rR$-module.

Another simple computation, again using the Leibniz rule, shows that
if $\coderQ\in\coDer_{\grR}(S_{\grR}(\LL))$ is an $\grR$-coderivation,
then the graded commutator $[\widetilde{d_{\LL}},\coderQ]:S_{\grR}(\LL)\to S_{\grR}(\LL)$
is also an $\grR$-coderivation, and thus the graded commutator
$\liederivative{d_{\LL}}(-):=[\widetilde{d_{\LL}},-]$ defines a degree $1$ operator
\[ \liederivative{d_{\LL}}:\coDer^{\bullet}_{\grR}(S_{\grR}(\LL))
\to\coDer^{\bullet+1}_{\grR}(S_{\grR}(\LL)) \]
which squares to zero by the graded Jacobi identity
$\liederivative{d_{\LL}}^2(-) = \frac{1}{2}[[\widetilde{d_{\LL}},\widetilde{d_{\LL}}],-]
= [(\widetilde{d_{\LL}})^2,-]=0$. It is easy to check that
$(\coDer_{\grR}(S_{\grR}(\LL)), \liederivative{d_{\LL}},[\argument,\argument])$
is a DG Lie $\rR$-algebra, that is, $(\coDer_{\grR}(S_{\grR}(\LL)), \liederivative{d_{\LL}})$
is a DG $\rR$-module equipped with an $\grR$-bilinear bracket $[\argument,\argument]$
such that the triple
$(\coDer_{\grR}(S_{\grR}(\LL)), \liederivative{d_{\LL}},[\argument,\argument])$
forms a DG Lie algebra in the usual sense. Also,
$F^p\coDer_{\grR}(S_{\grR}(\LL))\subset \coDer_{\grR}(S_{\grR}(\LL))$
is a DG Lie $\rR$-subalgebra for all $p\geq 1$.

\begin{lemma} \label{lem:MC}
Let $(\LL, d_{\LL})$ be a DG $\rR$-module. There is a bijection between:
\begin{enumerate}\item $L_{\infty}[1]$ $\rR$-algebra structures on $(\LL, d_{\LL})$; and
\item Maurer-Cartan elements of the DG Lie algebra
$(F^2\coDer_{\grR}(S_{\grR}(\LL)), \liederivative{d_{\LL}},\lie{\argument}{\argument})$.
\end{enumerate}
\end{lemma}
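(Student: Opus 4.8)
The plan is to recognize an $L_{\infty}[1]$ $\rR$-algebra structure on $(\LL,d_{\LL})$ as a square-zero perturbation of the canonical coderivation $\widetilde{d_{\LL}}$ on $S_{\grR}(\LL)$, and then to identify the defining identities \eqref{eq:Loo} with the Maurer--Cartan equation of the DG Lie algebra in (2).

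First I would set up the underlying bijection of data. By the reconstruction formula \eqref{eq:coder}, a degree $1$ $\grR$-coderivation $\coderQ\in F^2\coDer_{\grR}(S_{\grR}(\LL))$ is uniquely encoded by its corestriction $(q_0,q_1,q_2,\ldots)$ subject to $q_0=q_1=0$ (which is precisely the defining condition of $F^2$); that is, by a sequence $(q_n)_{n\geq2}$ of degree $1$ $\grR$-linear maps $q_n\colon\nst{\LL}{\grR}{n}\to\LL$. Graded symmetry of the $q_n$ is automatic since their source is the $n$-th symmetric $\grR$-power, and $\grR$-multilinearity is exactly the $\grR$-linearity of $\coderQ$. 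So the underlying data of an $L_{\infty}[1]$ $\rR$-algebra structure on $(\LL,d_{\LL})$ is the same thing as an element of $F^2\coDer^1_{\grR}(S_{\grR}(\LL))$, and it remains to match the axioms.

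Next I would form $\coderQ':=\widetilde{d_{\LL}}+\coderQ$. This is again a (not necessarily $\grR$-linear) coderivation of $S_{\grR}(\LL)$ in the sense of diagram \eqref{diag:R-coDer}, with Taylor coefficients $q_1=d_{\LL}$ together with the $q_n$, $n\geq2$. Expanding the square and using $(\widetilde{d_{\LL}})^2=0$ and $\coderQ^2=\tfrac12\lie{\coderQ}{\coderQ}$ (as $\coderQ$ is odd), one gets
\[
(\coderQ')^2=(\widetilde{d_{\LL}})^2+[\widetilde{d_{\LL}},\coderQ]+\coderQ^2=\liederivative{d_{\LL}}(\coderQ)+\tfrac12\lie{\coderQ}{\coderQ}.
\]
By the remarks preceding the lemma, $[\widetilde{d_{\LL}},\coderQ]$ and $\coderQ^2$ both lie in $\coDer_{\grR}(S_{\grR}(\LL))$, hence so does $(\coderQ')^2$; being a genuine $\grR$-linear coderivation it is therefore detected by its corestriction, i.e.\ $(\coderQ')^2=0$ if and only if $p\circ(\coderQ')^2=0$. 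Thus $\coderQ$ is a Maurer--Cartan element of $(F^2\coDer_{\grR}(S_{\grR}(\LL)),\liederivative{d_{\LL}},\lie{\argument}{\argument})$ if and only if $p\circ(\coderQ')^2=0$.

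Finally I would compute $p\circ(\coderQ')^2$ on $\nst{\LL}{\grR}{n}$ via \eqref{eq:coder} and \eqref{eq:dL}: applying $\coderQ'$ to $x_1\odot\cdots\odot x_n$ yields $\sum_{i,\sigma}\sign(\sigma)\,q_i(x_{\sigma(1)},\ldots,x_{\sigma(i)})\odot x_{\sigma(i+1)}\odot\cdots\odot x_{\sigma(n)}$, each summand living in $\nst{\LL}{\grR}{n-i+1}$, and applying $\coderQ'$ once more and projecting onto $\LL$ forces picking out, on each such $(n-i+1)$-fold symmetric factor, the top bracket of all its arguments. The outcome is precisely
\[
\sum_{i=1}^n\sum_{\sigma\in\shuffle{i}{n-i}}\sign(\sigma)\cdot q_{n-i+1}\big(q_i(x_{\sigma(1)},\ldots,x_{\sigma(i)}),x_{\sigma(i+1)},\ldots,x_{\sigma(n)}\big),
\]
i.e.\ the left-hand side of \eqref{eq:Loo}. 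Hence $(\coderQ')^2=0$ is equivalent to \eqref{eq:Loo} holding for all $n\geq2$ (the case $n=1$ being $d_{\LL}^2=0$, which holds by hypothesis), which is exactly the statement that $(q_2,q_3,\ldots)$ defines an $L_{\infty}[1]$ $\rR$-algebra structure on $(\LL,d_{\LL})$. This gives the desired bijection. I expect no serious obstacle here: the sign bookkeeping in the last step is identical to the classical $\rR=\KK$ case, and the only genuinely non-classical point — that $\widetilde{d_{\LL}}$, $\coderQ'$ and their composites are well-defined over $\grR$ despite $d_{\LL}$ not being $\grR$-linear, and that $(\coderQ')^2$ is nonetheless $\grR$-linear — has already been settled in the discussion preceding the lemma.
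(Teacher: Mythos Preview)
Your proof is correct and follows essentially the same approach as the paper's: both set up the bijection between the underlying data via corestriction, and then identify the left-hand side of \eqref{eq:Loo} with the coefficients of $\liederivative{d_{\LL}}(\coderQ)+\tfrac12[\coderQ,\coderQ]$. Your presentation via $(\widetilde{d_{\LL}}+\coderQ)^2=0$ is a minor repackaging of the same computation the paper calls ``direct''.
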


\begin{proof}
Given a degree $1$ coderivation $\coderQ\in F^2\coDer_{\grR}^1(S_{\grR}(\LL))$,
its corestriction $p\coderQ=q=(0,0,q_2,q_3,\ldots)$ yields a family of degree $1$
graded symmetric $\grR$-multilinear maps $q_n:\LL^{\odot_R n}\to \LL$, and conversely
given an $L_{\infty}[1]$ $\rR$-algebra structure $(\LL, d_{\LL},q_2,q_3,\ldots)$
on $(\LL, d_{\LL})$, the higher brackets $q_n$, $n\ge2$ are the coefficients
of a degree $1$ coderivation $\coderQ\in F^2\coDer_{\grR}^1(S_{\grR}(\LL))$.
A direct computation shows that the left hand side of Equation~\eqref{eq:Loo}
are the coefficients of the degree $2$ coderivation
\[ \liederivative{d_{\LL}}(\coderQ)+\frac{1}{2}[\coderQ,\coderQ]
\in F^2\coDer_{\grR}^2(S_{\grR}(\LL)).\]
Thus the $(q_n)_{n\geq 2}$ are the coefficients of an $L_{\infty}[1]$ $\rR$-algebra
structure on $\LL$ if and only if $\coderQ$ is a solution of the Maurer-Cartan equation
\[ \liederivative{d_{\LL}}(\coderQ) + \frac{1}{2}[\coderQ,\coderQ] = 0.\]
This completes the proof.
\end{proof}

Given graded $\grR$-modules $\LL$ and $\MM$, a morphism of coaugmented counital
graded $\grR$-coalgebras $S_{\grR}(\LL)\to S_{\grR}(\MM)$ is completely determined
by its corestriction
\[ pF=(f_1,f_2,\ldots)\in\Hom^0(\overline{S}_{\grR}(\LL),\MM)
\cong\prod_{n\ge1}\Hom^0_{\grR}(\nst{\LL}{\grR}{n},\MM) ,\]
and conversely, a family of graded symmetric degree $0$ graded $\grR$-multilinear maps
$f_n:\nst{\LL}{\grR}{n} \to \MM$, $n\ge1$, determines $F:S_{\grR}(\LL)\to S_{\grR}(\MM)$
according to $F\circ\epsilon_{\LL}=\epsilon_{\MM}$
(with $\epsilon_{\LL}:\grR\to S_{\grR}(\LL)$, $\epsilon_{\MM}:\grR\to S_{\grR}(\MM)$
the coaugmentations) and for $n\geq 1$, $x_1,\ldots,x_n\in \LL$,
\[ F(x_1\odot\cdots\odot x_n) = \sum_{k=1}^n\sum_{i_1+\cdots+i_k=n}
\sum_{\sigma\in \Nshuffle{i_1}{i_{2}}{i_k}}\,\frac{\sign(\sigma)}{k!}
f_{i_1}(x_{\sigma(1)},\ldots)\odot\cdots\odot f_{i_k}(\ldots,x_{\sigma(n)}) .\]
If $(\LL, d_{\LL},q_2,q_3,\ldots)$ and $(\MM, d_{\MM},r_2,r_3,\ldots)$
are $L_{\infty}[1]$ $\rR$-algebras, a direct computation shows that $(f_1,f_2,\ldots)$
are the coefficients of an $L_{\infty}[1]$ $\rR$-morphism if and only if
\[ F\circ (\widetilde{d_{\LL}}+\coderQ)= (\widetilde{d_{\MM}}+\coderR)
\circ F:S_{\grR}(\LL)\to S_{\grR}(\MM) .\]
Moreover, if $(f_{1},f_{2},\ldots):\LL\to \MM$ and $(g_{1},g_{2},\ldots):\MM\to L$
are $L_{\infty}[1]$ $\rR$-morphisms, then their composition agrees with the coefficients
of the composition of corresponding graded $\grR$-coalgebra morphisms
$F:S_{\grR}(\LL) \to S_{\grR}(\MM)$ and $G:S_{\grR}(\MM) \to S_{\grR}(L)$.

Recall that a DG $\rR$-coalgebra is a DG $\rR$-module $(C,d_{C})$ equipped
with a graded $\grR$-coalgebra structure on $C$ compatible with $d_{C}$
(see~\cite{MR4393962} for the precise definition of DG coalgebra over a DG commutative ring)
and a morphism of DG $\rR$-coalgebras $F:(C,d_{C})\to (B,d_{B})$ is a morphism
of graded $\grR$-coalgebras compatible with differentials.
Note that both $(S_{\grR}(\LL), \widetilde{d_{\LL}}+\coderQ)$
and $(S_{\grR}(\MM),\widetilde{d_{\MM}}+\coderR)$ form DG $\rR$-coalgebras
and $F$ is a morphism of DG $\rR$-coalgebras. Thus, Lemma~\ref{lem:MC} induces
a fully faithful functor
\[ \Phi:(\LL, d_{\LL},q_2,q_3,\ldots)\mapsto (S_{\grR}(\LL),\widetilde{d_{\LL}}+\coderQ) ,\]
which embeds the category of $L_{\infty}[1]$ $\rR$-algebras into a full subcategory
of the category of DG $\rR$-coalgebras.

\begin{proposition}\label{prop:CatEmb}
Let $\rR=(\grR,d_{\grR})$ be a \DGCA{} with unit.
The category $\LRA$ is equivalent to the full subcategory
$\mathcal{C}$ of the category of DG $\rR$-coalgebras,
consisting of objects of the form $(S_{\grR}(\LL),\DQ)$ satisfying
$\DQ(1_{\grR})=0$ with $\LL$ being a graded $\grR$-module.
\end{proposition}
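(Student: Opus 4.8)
The plan is to establish the equivalence by constructing the functor $\Phi$ explicitly, showing it lands in $\mathcal{C}$, and then checking it is fully faithful and essentially surjective onto $\mathcal{C}$. Much of this has already been assembled in the discussion preceding the statement; the proof mostly amounts to organizing those observations.

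\emph{The functor and its target.}
First I would define $\Phi$ on objects by $(\LL,d_{\LL},q_2,q_3,\ldots)\mapsto(S_{\grR}(\LL),\widetilde{d_{\LL}}+\coderQ)$, where $\coderQ\in F^2\coDer^1_{\grR}(S_{\grR}(\LL))$ is the coderivation whose corestriction is $(0,0,q_2,q_3,\ldots)$, as in Lemma~\ref{lem:MC}. On morphisms, $\Phi$ sends $F=(f_1,f_2,\ldots)$ to the graded $\grR$-coalgebra morphism $S_{\grR}(\LL)\to S_{\grR}(\MM)$ it determines. The content of Lemma~\ref{lem:MC} is precisely that $\widetilde{d_{\LL}}+\coderQ$ squares to zero, so $(S_{\grR}(\LL),\widetilde{d_{\LL}}+\coderQ)$ is a DG $\rR$-coalgebra; and the computation recorded just after Lemma~\ref{lem:MC} shows that $F$ intertwines $\widetilde{d_{\LL}}+\coderQ$ with $\widetilde{d_{\MM}}+\coderR$ exactly when $F$ is an $L_\infty[1]$ $\rR$-morphism, so $\Phi$ is a well-defined functor. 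Since $\widetilde{d_{\LL}}+\coderQ$ is a ``coderivation'' (in the weaker, non-$\grR$-linear sense) annihilating $1_{\grR}$ — its $\grR$-coderivation part $\coderQ$ kills $1_\grR$ because $q_0=q_1=0$, and $\widetilde{d_{\LL}}(1_\grR)=d_\grR(1_\grR)=0$ — the object $\Phi(\LL)$ has the form $(S_{\grR}(\LL),\DQ)$ with $\DQ(1_{\grR})=0$. Hence $\Phi$ factors through $\mathcal{C}$.

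\emph{Full faithfulness.}
For fullness and faithfulness, the key point is that \emph{any} morphism of coaugmented counital graded $\grR$-coalgebras $S_{\grR}(\LL)\to S_{\grR}(\MM)$ is uniquely reconstructed from its corestriction $(f_1,f_2,\ldots)$, by the cofreeness of $S_\grR(-)$ recalled in the excerpt. Given a morphism $\Psi$ of DG $\rR$-coalgebras $\Phi(\LL)\to\Phi(\MM)$, it is in particular a graded $\grR$-coalgebra morphism, so $\Psi=F$ for a unique family $(f_1,f_2,\ldots)$; the compatibility of $\Psi$ with the differentials $\widetilde{d_{\LL}}+\coderQ$ and $\widetilde{d_{\MM}}+\coderR$ is, by the same computation cited above, equivalent to $(f_1,f_2,\ldots)$ satisfying the $L_\infty[1]$ $\rR$-morphism identities \eqref{eq:LooMor}. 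Thus $\Psi=\Phi(F)$ for a unique $L_\infty[1]$ $\rR$-morphism $F$, giving a bijection $\Hom_{\LRA}(\LL,\MM)\xrightarrow{\sim}\Hom_{\mathcal{C}}(\Phi(\LL),\Phi(\MM))$.

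\emph{Essential surjectivity onto $\mathcal{C}$.}
Finally, let $(S_{\grR}(\LL),\DQ)$ be any object of $\mathcal{C}$. Its differential $\DQ$ is a degree $1$ ``coderivation'' of $S_\grR(\LL)$ (compatible with $\Delta$) with $\DQ(1_\grR)=0$ and $\DQ^2=0$. Decompose $\DQ$ via its corestriction $p\DQ=(q_0,q_1,q_2,\ldots)$: the condition $\DQ(1_\grR)=0$ forces $q_0=0$, so $q_1:\LL\to\LL$ is a degree $1$ $\KK$-linear map. Here the one genuinely delicate point — and the step I expect to be the main obstacle — is to show that $q_1$ is forced to be a derivation over $d_\grR$, i.e.\ a DG $\rR$-module differential, rather than merely a $\KK$-linear map; this is where one must use that $\DQ$ is assumed $\grR$-\emph{coalgebra}-compatible in the appropriate DG sense over the DG ring $\rR$ (the relevant notion of DG $\grR$-coalgebra, with its Leibniz compatibility between $d_\grR$ and the comultiplication, is the one referenced from~\cite{MR4393962}), so that $\DQ$ is a derivation of the coalgebra structure relative to $d_\grR$ and its degree-$1$ part must likewise be a derivation relative to $d_\grR$. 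Writing $q_1=d_\LL$, one checks $(\LL,d_\LL)$ is a DG $\rR$-module, and then $\DQ-\widetilde{d_\LL}$ is an honest $\grR$-linear coderivation in $F^2\coDer^1_\grR(S_\grR(\LL))$ (it kills $1_\grR$ and has vanishing $q_0,q_1$ components), hence by Lemma~\ref{lem:MC} corresponds to higher brackets $(q_2,q_3,\ldots)$; the identity $\DQ^2=0$ unwinds (again via Lemma~\ref{lem:MC}) to the Maurer–Cartan equation, so $(\LL,d_\LL,q_2,q_3,\ldots)$ is an $L_\infty[1]$ $\rR$-algebra with $\Phi(\LL,d_\LL,q_2,q_3,\ldots)\cong(S_\grR(\LL),\DQ)$. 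This shows $\Phi$ is essentially surjective onto $\mathcal{C}$, and combined with full faithfulness, $\LRA\simeq\mathcal{C}$.
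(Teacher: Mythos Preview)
Your proof is correct and follows essentially the same approach as the paper: both show $\Phi$ lands in $\mathcal{C}$, invoke cofreeness of $S_{\grR}(-)$ for full faithfulness, and for essential surjectivity extract $d_{\LL}:=p\DQ i$, verify it makes $(\LL,d_{\LL})$ a DG $\rR$-module, and then identify $\DQ-\widetilde{d_{\LL}}$ as an $\grR$-coderivation in $F^2$ satisfying the Maurer--Cartan equation via Lemma~\ref{lem:MC}. The paper dispatches the Leibniz-rule check for $d_{\LL}$ with ``it is straightforward,'' whereas you correctly flag it as the one point requiring the DG $\rR$-coalgebra compatibility from~\cite{MR4393962}; otherwise the arguments are the same.
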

\begin{proof}
Since $(S_{\grR}(\LL), \widetilde{d_{\LL}}+\coderQ)$ is a DG $\rR$-coalgebra
satisfying $(\widetilde{d_{\LL}}+\coderQ)(1)=0$, the image of the functor $\Phi$
is in the category $\mathcal{C}$. Conversely, let $(S_{\grR}(\LL), \DQ)$
be a DG $\rR$-coalgebra satisfying $\DQ (1_{\grR})=0$.
Given a natural inclusion $i:\LL\into S_{\grR}(\LL)$ and a natural
projection $p:S_{\grR}(\LL)\to \LL$, denote their composition with $\DQ$
by $d_{\LL}:=p\DQ i:\LL\to \LL$. It is straightforward to check that $(\LL, d_{\LL})$
is a DG $\rR$-module and moreover
$\coderQ:=\DQ -\widetilde{d_{\LL}}\in F^{2}\coDer_{\grR}(S_{\grR}\LL)$
is an $\grR$-coderivation. Now, since $(\widetilde{d_{\LL}})^{2}=0$,
we have the Maurer-Cartan equation
\[ 0=\DQ^{2}=(\widetilde{d_{\LL}}+\coderQ)^{2}
=\liederivative{d_{\LL}}(\coderQ)+\half [\coderQ,\coderQ] .\]
Thus, by Lemma~\ref{lem:MC}, there is an $L_{\infty}[1]$ $\rR$-algebra structure
on $(\LL,d_{\LL})$ and its image under $\Phi$ is $(S_{\grR}(\LL),\DQ)$.
This completes the proof.
\end{proof}

\begin{corollary}\label{cor:CatEmb}
For any DG $\rR$-coalgebra
$(S_{\grR}(\LL),\DQ)$ with $\DQ(1_{\grR})=0$, there is a DG $\rR$-module
$(\LL, d_{\LL})$ and an MC element $\coderQ\in F^{2}\coDer_{\grR}(S_{\grR}(\LL))$
such that $\DQ=\widetilde{d_{\LL}}+\coderQ$.
\end{corollary}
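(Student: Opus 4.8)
The statement is essentially a repackaging of the construction carried out inside the proof of Proposition~\ref{prop:CatEmb}, so the plan is to extract and streamline that argument. Starting from a DG $\rR$-coalgebra $(S_{\grR}(\LL),\DQ)$ with $\DQ(1_{\grR})=0$, I would first manufacture the DG $\rR$-module structure on $\LL$ itself: set $d_{\LL}:=p\,\DQ\,i:\LL\to\LL$, where $i:\LL\into S_{\grR}(\LL)$ and $p:S_{\grR}(\LL)\to\LL$ are the inclusion and projection coming from the splitting \eqref{S_R}. Since $p$ and $i$ are $\grR$-linear and $\DQ$ satisfies the Leibniz rule over $\rR$, a direct computation gives $d_{\LL}(rx)=d_{\grR}(r)x+(-1)^{\degree{r}}r\,d_{\LL}(x)$; and $\DQ^{2}=0$ together with the coderivation property of $\DQ$ forces the unary relation $d_{\LL}^{2}=0$ (there being no $q_{0}$ contribution because $\DQ(1_{\grR})=0$). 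Hence $(\LL,d_{\LL})$ is a DG $\rR$-module and the operator $\widetilde{d_{\LL}}$ of \eqref{eq:dL} is defined.

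Next I would put $\coderQ:=\DQ-\widetilde{d_{\LL}}$ and check that $\coderQ\in F^{2}\coDer_{\grR}(S_{\grR}(\LL))$. The one point that genuinely requires attention is that $\coderQ$ is $\grR$-linear, even though neither $\DQ$ nor $\widetilde{d_{\LL}}$ is: both satisfy the \emph{same} Leibniz identity over $\rR$, namely $\DQ(rc)=d_{\grR}(r)\,c+(-1)^{\degree{r}}r\,\DQ(c)$ and $\widetilde{d_{\LL}}(rc)=d_{\grR}(r)\,c+(-1)^{\degree{r}}r\,\widetilde{d_{\LL}}(c)$ for $c\in S_{\grR}(\LL)$, so the inhomogeneous term $d_{\grR}(r)\,c$ cancels in the difference. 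Both $\DQ$ and $\widetilde{d_{\LL}}$ also make diagram \eqref{diag:R-coDer} commute, and that condition is additive in the operator, so $\coderQ$ does too; hence $\coderQ\in\coDer_{\grR}(S_{\grR}(\LL))$. For the filtration condition, its $0$-th Taylor coefficient is $\coderQ(1_{\grR})=\DQ(1_{\grR})-\widetilde{d_{\LL}}(1_{\grR})=0-d_{\grR}(1_{\grR})=0$, while its linear coefficient is $p\,\coderQ\,i=p\,\DQ\,i-p\,\widetilde{d_{\LL}}\,i=d_{\LL}-d_{\LL}=0$ by the very definition of $d_{\LL}$; thus $q_{0}=q_{1}=0$ and $\coderQ\in F^{2}\coDer_{\grR}(S_{\grR}(\LL))$.

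It then remains to see that $\coderQ$ is a Maurer--Cartan element and that $\DQ$ decomposes as claimed. The decomposition $\DQ=\widetilde{d_{\LL}}+\coderQ$ is immediate from the definition of $\coderQ$. For the Maurer--Cartan equation, expanding the square of the degree $1$ operator $\DQ=\widetilde{d_{\LL}}+\coderQ$ gives
\[ 0=\DQ^{2}=(\widetilde{d_{\LL}}+\coderQ)^{2}=(\widetilde{d_{\LL}})^{2}+\liederivative{d_{\LL}}(\coderQ)+\half[\coderQ,\coderQ], \]
and since $(\widetilde{d_{\LL}})^{2}=0$ this yields $\liederivative{d_{\LL}}(\coderQ)+\half[\coderQ,\coderQ]=0$, which is exactly the Maurer--Cartan equation in the DG Lie algebra $(F^{2}\coDer_{\grR}(S_{\grR}(\LL)),\liederivative{d_{\LL}},[\argument,\argument])$. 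I expect the $\grR$-linearity/coderivation verification of $\coderQ$ in the second step to be the only place where something must genuinely be checked; it is precisely the Leibniz-rule bookkeeping already carried out in the discussion preceding Lemma~\ref{lem:MC}, so in the writeup I would simply invoke those computations rather than redo them.
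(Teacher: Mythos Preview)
Your proposal is correct and follows exactly the approach taken in the paper: the corollary is stated there without a separate proof because the construction you describe --- defining $d_{\LL}:=p\,\DQ\,i$, setting $\coderQ:=\DQ-\widetilde{d_{\LL}}$, and reading off the Maurer--Cartan equation from $\DQ^{2}=0$ --- is precisely the argument carried out in the ``Conversely'' part of the proof of Proposition~\ref{prop:CatEmb}. Your write-up simply unpacks the ``straightforward to check'' steps (the Leibniz cancellation showing $\coderQ$ is $\grR$-linear, and the vanishing of $q_{0}$ and $q_{1}$) that the paper leaves implicit.
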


\begin{remark}
Kapranov DG manifolds, as studied in~\cite{MR4271478}
are a special case of $L_\infty[1]$ $\rR$-algebras.
Recall that
a Kapranov DG manifold consists of a pair of smooth vector bundles $A$ and $E$
over a common base manifold together with a pair of homological vector fields on the
$\ZZ$-graded manifolds $A[1]\oplus E$ and $A[1]$ such that both the inclusion
$A[1]\into A[1]\oplus E$ and the projection $A[1]\oplus E\onto E$ are morphisms of DG manifolds.
In this case,
$A$ is necessarily a Lie algebroid,
$E$ is necessarily an $A$-module,
and $\Omega_A^\bullet (E): =
\sections{\Lambda^\bullet A^\vee\otimes E}$
is an $L_\infty[1]$ $\rR$-algebra,
where $\rR$ is the \DGCA \ $(\Omega_{A}^{\bullet}, d_{\CE})$.
Moreover, morphisms of Kapranov DG manifolds
correspond to morphisms of $L_\infty[1]$ $\rR$-algebras.
Hence, for a fixed Lie algebroid $A$, the category of
Kapranov DG manifolds of the form $A[1]\oplus E$ forms a subcategory of $\LRA$.

We also note that $\text{Leibniz}_{\infty}[1]$ $\rR$-algebras --- an analogue of
$L_\infty[1]$ $\rR$-algebras --- has been introduced
by Chen--Liu--Xiang \cite[Definition~3.5]{MR4031996}.
\end{remark}

\subsection{Homotopy transfer}

The aim of this section is to prove the well-known homotopy transfer theorem
in the context of $L_{\infty}[1]$ algebras defined over an arbitrary \DGCA{}
$\rR=(\grR,d_{\grR})$ (as always, over a field of characteristic zero).
Recall that a contraction $(f,g,h)$ of the complex $(\LL, d_{\LL})$ onto the one
$(\MM, d_{\MM})$ is the datum of cochain maps $f:(\MM, d_{\MM})\to(\LL, d_{\LL})$
and $g:(\LL, d_{\LL})\to(\MM, d_{\MM})$ such that $gf=\id_{\MM}$,
and a cochain homotopy $h$ between $fg$ and $\id_{\LL}$, that is,
a degree $-1$ map $h:\LL^\bullet \to \LL^{\bullet-1}$ such that
$d_{\LL}h+hd_{\LL} = fg-\id_{\LL}$, and moreover the following side conditions
are satisfied: $h^2=gh=hf=0$.
The contraction $(f,g,h)$ is often denoted by the following diagram:
\[ \begin{tikzcd}
h & \arrow[loop, distance=30, in=167, out=193] (\LL, d_{\LL})
\arrow[shift left]{r}{g} & \arrow[shift left]{l}{f} (\MM, d_{\MM}) \, .
\end{tikzcd} \]
If $(\LL, d_{\LL})$ and $(\MM, d_{\MM})$ are DG $\rR$-modules and $f,g,h$
are $\grR$-linear, we say that $(f,g,h)$ is an $\grR$-linear contraction
of the DG $\rR$-module $(\LL, d_{\LL})$ onto the one $(\MM, d_{\MM})$
($\grR$-linearity of $h$ is intended in the graded sense: $rh(x)=(-1)^{|r|}h(rx)$
for all $r\in \grR$ and $x\in L$).
In this case, there is an induced contraction $(S_{\grR}(f),S_{\grR}(g),H)$
of the DG $\rR$-module $(S_{\grR}(\LL),\widetilde{d_{\LL}})$
onto the one $(S_{\grR}(\MM),\widetilde{d_{\MM}})$, where the contracting homotopy $H$
is defined via the symmetrized tensor trick: $H(r)=0$ if $r\in \grR\subset S_{\grR}(\LL)$
and for $n\geq 1$, $x_1,\ldots,x_n\in\LL$,
\begin{multline*}
H(x_1\odot\cdots\odot x_n)= \frac{1}{n!}\sum_{i=1}^n\sum_{\sigma\in S_n} \sign(\sigma)
\\ (-1)^{|x_{\sigma(1)}|+\cdots+|x_{\sigma(i-1)}|}fg(x_{\sigma(1)})\odot\cdots\odot
fg(x_{\sigma(i-1)})\odot h(x_{\sigma(i)})\odot x_{\sigma(i+1)}\odot\cdots\odot x_{\sigma(n)}.
\end{multline*}
In the following theorem, given $\coderQ\in\coDer_{\grR}(S_{\grR}(\LL))$,
we denote by $\coderQ^{k}_{n}$ the composition
\[ \begin{tikzcd}
\nst{\LL}{\grR}{n} \arrow[hook]{r} & S_{\grR}(\LL) \arrow{r}{\coderQ}
& S_{\grR}(\LL) \arrow[two heads]{r} & \nst{\LL}{\grR}{k},
\end{tikzcd} \]
where the first and last map are the natural inclusion and projection coming from \eqref{S_R}.

We are now ready to state the following:

\begin{theorem}[Homotopy Transfer Theorem]
\label{thm:transfer}
Given an $L_{\infty}[1]$ $\rR$-algebra $(\LL;d_{\LL},q_2,q_3,\ldots)$
and an $\grR$-linear contraction $(f,g,h)$ of the DG $\rR$-module $(\LL,d_{\LL})$
onto the one $(\MM, d_{\MM})$, there is an induced $L_{\infty}[1]$ $\rR$-algebra
structure $(\MM;d_{\MM},r_2,r_3,\ldots)$ on $(\MM,d_{\MM})$,
together with $L_{\infty}[1]$ $\rR$-quasi-isomorphisms $F:\MM\to\LL$
and $G:\LL\to\MM$ extending $f$ and $g$, respectively.

Explicitly, $f_1=f$, and for $n\ge2$, $x_1,\ldots,x_n\in \MM$, $f_n$
and $r_n$ are recursively defined by
\[ f_n(x_1,\ldots,x_n) = \sum_{k=2}^n\sum_{i_1+\cdots+i_k=n}
\sum_{\sigma\in \Nshuffle{i_1}{i_{2}}{i_k}} \frac{\sign(\sigma)}{k!}
hq_k(f_{i_1}(x_{\sigma(1)},\ldots),\ldots,f_{i_k}(\ldots,x_{\sigma(n)})), \]
\[ r_n(x_1,\ldots,x_n) = \sum_{k=2}^n\sum_{i_1+\cdots+i_k=n}
\sum_{\sigma\in \Nshuffle{i_1}{i_{2}}{i_k}} \frac{\sign(\sigma)}{k!}
g_1q_k(f_{i_1}(x_{\sigma(1)},\ldots),\ldots,f_{i_k}(\ldots,x_{\sigma(n)})). \]
The coefficients $g_n:\nst{\LL}{\grR}{n} \to \MM$ of $G$ are defined by $g_1=g$,
and for $n\geq 2$ recursively by
\[ g_n = \sum_{k=1}^{n-1} g_k \coderQ^k_nH. \]
\end{theorem}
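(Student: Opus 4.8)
The plan is to follow the classical proof of the homotopy transfer theorem, but working throughout in the category of DG $\rR$-coalgebras rather than with plain $L_\infty[1]$ algebras, so that $\grR$-linearity is automatic. By Lemma~\ref{lem:MC} and Corollary~\ref{cor:CatEmb}, the data $(\LL;d_\LL,q_2,q_3,\ldots)$ is encoded by the degree $1$ codifferential $\DQ=\widetilde{d_\LL}+\coderQ$ on $S_\grR(\LL)$, and constructing a transferred structure on $(\MM,d_\MM)$ amounts to producing a codifferential $\DR=\widetilde{d_\MM}+\coderR$ on $S_\grR(\MM)$ together with a morphism of graded $\grR$-coalgebras $F\colon S_\grR(\MM)\to S_\grR(\LL)$ intertwining $\DR$ and $\DQ$; the quasi-isomorphism property will then follow from $f_1=f$ being a quasi-isomorphism. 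First I would verify that the symmetrized tensor trick homotopy $H$ really does assemble $(f,g,h)$ into an $\grR$-linear contraction $(S_\grR(f),S_\grR(g),H)$ of $(S_\grR(\LL),\widetilde{d_\LL})$ onto $(S_\grR(\MM),\widetilde{d_\MM})$ --- this is a bookkeeping check of the side conditions $H^2=S_\grR(g)H=HS_\grR(f)=0$ and the homotopy identity, using that $h$ satisfies the analogous conditions downstairs and that everything is $\grR$-(multi)linear because $d_\LL$ being only a derivation still makes $\widetilde{d_\LL}$ a well-defined coderivation-like operator as explained in the text.

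Next I would set up the recursion. Write $\coderQ=\sum_{k\ge2}\coderQ_k$ for the homogeneous components (where $\coderQ_k$ raises tensor length by $k-1$), and define $F$ by the fixed-point/perturbation formula: its corestriction $pF=(f_1,f_2,\ldots)$ with $f_1=f$ and, for $n\ge2$, $f_n$ given by the stated recursive formula $f_n = \sum_{k\ge2}\sum_{i_1+\cdots+i_k=n}\sum_\sigma \tfrac{\sign(\sigma)}{k!}\, h\,q_k(f_{i_1},\ldots,f_{i_k})$. Equivalently, $F$ is the unique coalgebra morphism with $pF = p\,S_\grR(f) + H\,\coderQ\,F$ on $\overline S_\grR(\MM)$; this equation has a unique solution by induction on tensor length because the term $H\coderQ F$ restricted to $\nst{\MM}{\grR}{n}$ only involves $f_i$ with $i<n$ (each $\coderQ_k$ with $k\ge2$ strictly decreases the length before $H$ and $F$ act). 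Then define $\coderR$ (equivalently $r_n$) by the stated formula with $g_1 q_k(\cdots)$ in place of $hq_k(\cdots)$; coalgebra-theoretically this is $\coderR := S_\grR(g)\circ \coderQ\circ F$ corestricted, i.e. $\DR$ is declared to be $\widetilde{d_\MM}+\coderR$ and the content is that this is a codifferential and that $\DQ F = F\DR$. The auxiliary $G$ with $g_1=g$ and $g_n=\sum_{k=1}^{n-1} g_k\,\coderQ^k_n\,H$ is then checked to be a coalgebra morphism intertwining $\DQ$ and $\DR$ in the other direction.

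The core computation --- and the step I expect to be the main obstacle --- is verifying the two identities $\DQ\circ F = F\circ \DR$ (which encodes both that $F$ is an $L_\infty[1]$ $\rR$-morphism and, by projecting, that $\coderR$ satisfies the $L_\infty[1]$ relations, i.e. $\liederivative{d_\MM}\coderR+\tfrac12[\coderR,\coderR]=0$) and $\DQ^2=0\Rightarrow\DR^2=0$. The clean way to get this is the homological perturbation lemma: the contraction $(S_\grR(f),S_\grR(g),H)$ of $(S_\grR(\LL),\widetilde{d_\LL})$ onto $(S_\grR(\MM),\widetilde{d_\MM})$ is perturbed by the small perturbation $\coderQ\in F^2\coDer_\grR$ --- smallness/local nilpotency holds because $\coderQ$ strictly increases the word-length filtration $F^p$, so the geometric series $\sum_{j\ge0}(\coderQ H)^j$ converges on each $\nst{\MM}{\grR}{n}$ --- yielding a perturbed contraction whose perturbed differential on $S_\grR(\MM)$ is exactly $\widetilde{d_\MM}+\coderR$ and whose perturbed structure maps are exactly $F$ and $G$ with the formulas above. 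Since perturbing a codifferential along a coalgebra contraction produces again a codifferential and coalgebra morphisms (one checks the perturbation lemma output respects $\Delta$, using that $\coderQ$, $\widetilde{d_\LL}$, $S_\grR(f)$, $S_\grR(g)$ are all compatible with $\Delta$ and that $H$ is built to be as well), $\DR^2=0$ and $\DQ F=F\DR$, $G\DQ=\DR G$ come for free. The remaining bookkeeping is to extract the explicit recursive formulas for $f_n$, $r_n$, $g_n$ from the perturbation-lemma series --- a combinatorial unwinding of $H\sum_j(\coderQ H)^j$ using that $H$ is the symmetrized homotopy and $\coderQ$ has components $\coderQ_k$ --- and to note that $gf=\id_\MM$ forces $f_1=f$ to be a quasi-isomorphism, so $F$ and $G$ are $L_\infty[1]$ $\rR$-quasi-isomorphisms. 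I would close by remarking that $\grR$-linearity of the transferred brackets and morphisms is immediate since every ingredient ($h$, $H$, $\coderQ$, projections, inclusions) is $\grR$-linear, which is precisely where the DG $\rR$-module structure of the excerpt is used.
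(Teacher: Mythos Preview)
Your argument is correct in outline and would yield a valid proof, but it takes a considerably more laborious route than the paper's. The paper's proof is a two-line reduction: over $(\grR,d_\grR)=(\KK,0)$ the homotopy transfer theorem, including the explicit recursive formulas for $f_n$, $r_n$, $g_n$, is classical (with a reference for the less standard claim about $G$); the only new content in the $\rR$-relative setting is then to observe that if $(f,g,h)$ are $\grR$-linear, the recursive formulas manifestly produce $\grR$-multilinear $f_n$, $r_n$, $g_n$ by an immediate induction on $n$. No coalgebra-level perturbation argument is redone.

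Your approach instead reproves homotopy transfer from scratch at the level of DG $\grR$-coalgebras via the homological perturbation lemma. This is a perfectly valid and in some ways more conceptual strategy, but it carries overhead you do not fully discharge: the HPL is a statement about chain complexes, and showing that its output respects the comultiplication $\Delta$ (so that the perturbed differential is a codifferential and the perturbed maps are coalgebra morphisms) is a genuine check, not something that ``comes for free'' --- it relies on specific properties of the symmetrized homotopy $H$, which you acknowledge but do not verify. There is also a small slip: a coderivation component $\coderQ_k$ coming from $q_k\colon \LL^{\odot k}\to\LL$ \emph{lowers} word length by $k-1$, not raises it; your convergence argument for $\sum_j(\coderQ H)^j$ still goes through, but the stated reason is backwards. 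The payoff of your route is a self-contained argument that does not cite the $\KK$-case as a black box; the cost is that you must redo combinatorics the paper simply imports.
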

\begin{proof}
When $(\grR,d_{\grR})=(\KK,0)$,
the result is completely standard, except perhaps the statement about $G$:
a proof of this can be found in~\cite{MR3276839}. But then,
by the definition of $L_{\infty}[1]$ $\rR$-algebras and their morphisms, it suffices to
show that if the contraction $(f,g,h)$ is $\grR$-linear then the coefficients
$f_n, g_n, r_n$ are also $\grR$-multilinear. This is accomplished via an easy induction,
using the explicit recursive formulas for $f_n, g_n, r_n$.
\end{proof}

\begin{remark}
Let $(S_{\grR}(\LL),\widetilde{d_{\LL}}+\coderQ)$ be the DG $\rR$-coalgebra
corresponding to an $L_\infty[1]$ $\rR$-algebra
$(\LL, d_{\LL},q_{2},q_{3},\ldots)$.
Then the above theorem can be restated in terms of DG $\rR$-coalgebras:
under the hypothesis of Theorem~\ref{thm:transfer},
there is a contraction between DG $\rR$-coalgebras
\begin{equation}\label{eq:HTR-C}
\begin{tikzcd}
H & \arrow[loop, distance=30, in=173, out=187]
(S_{\grR}(\LL),\widetilde{d_{\LL}}+\coderQ) \arrow[shift left]{r}{G} & \arrow[shift left]{l}{F}
(S_{\grR}(\MM), \widetilde{d_{\MM}}+ \coderR),
\end{tikzcd}
\end{equation}
where the DG $\rR$-coalgebra morphisms $F=(f_{1},f_{2},\ldots)$ and
$G=(g_{1},g_{2},\ldots)$, and the $\grR$-coderivation $\coderR=(r_{2},r_{3},\ldots)$
are explicitly described as in Theorem~\ref{thm:transfer}.
\end{remark}

\subsection{Higher derived brackets}
A DG Lie $\rR$-algebra $(\LL, d_{\LL},[\argument,\argument])$ is a DG Lie algebra
in the usual sense, such that moreover $(\LL, d_{\LL})$ is a DG $\rR$-module
and the bracket $[\argument,\argument]$ is $\grR$-bilinear. We first recall
a construction by Fiorenza-Manetti \cite{MR2361936}.

Given $f:(\LL, d_{\LL})\to(\MM, d_{\MM})$ a morphism of DG $\rR$-modules,
we denote by $C(f)[1]:=\LL[1]\times \MM$, with the differential
\[ d_{C(f)[1]}(s^{-1}x,y) =(-s^{-1}d_{\LL}(x),d_{\MM}(y)-f(x)),\]
where $s:\LL[1]\to \LL$ is the degree shift map.
It is easy to check that $\big(C(f)[1], d_{C(f)[1]}\big)$
is a DG $\rR$-module via $r(s^{-1}x,y)=\left((-1)^{|r|}s^{-1}rx,ry\right)$.
In the following theorem, we describe a certain $L_{\infty}[1]$ $\rR$-algebra
structure on $C(f)[1]$, when $f$ is a morphism of DG Lie $\rR$-algebras.
To do so, it will be convenient to use the decomposition
$C(f)[1]=\LL[1]\times \MM = \LL[1]\oplus \MM$ to induce a decomposition
\[ \nst{C(f)[1]}{\grR}{n} =\bigoplus_{i=0}^n
\nst{\LL[1]}{\grR}{i} \otimes_{\grR} \nst{\MM}{\grR}{n-i} ,\]
so that a graded symmetric $\grR$-multilinear map
$q_n:\nst{C(f)[1]}{\grR}{n} \to C(f)[1]$ is completely determined by its components
$q_{i,n-i}: \nst{\LL}{\grR}{i} \otimes_{\grR} \nst{\MM}{\grR}{n-i}\to C(f)[1]$.

\begin{theorem}\label{thm:cone}
Associated with any morphism of DG Lie $\rR$-algebras
$f:(\LL, d_{\LL},[\argument,\argument])\to(\MM, d_{\MM},[\argument,\argument])$,
there exists an induced $L_{\infty}[1]$ $\rR$-algebra structure
$(d_{C(f)[1]},q_2,q_3,\cdots)$ on $C(f)[1]$.
With the above notations, the quadratic bracket $q_2$ is given by
\[ q_{2,0}\left(s^{-1}x_1,s^{-1}x_2\right)=(-1)^{|x_1|}s^{-1}[x_1,x_2]
\in\LL[1],\qquad q_{1,1}(s^{-1}x,y)=\frac{1}{2}[f(x),y]\in \MM,\qquad q_{0,2}=0 ,\]
and for $n\ge3$ we have $q_{i,n-i}=0$ if $i\neq1$, and
\[ q_{1,n-1}(s^{-1}x,y_1,\ldots,y_{n-1}) = -\frac{B_{n-1}}{(n-1)!}
\sum_{\sigma\in S_{n-1}}\sign(\sigma)\cdot
[\cdots[f(x),y_{\sigma(1)}]\cdots,y_{\sigma(n-1)}]\in \MM,\]
where the $B_n$ are the Bernoulli numbers.
After a shift, the associated $L_{\infty}$ $\rR$-algebra structure
on $C(f)$ is a homotopy fiber of $f$ in the homotopy category of
$L_{\infty}$ $\rR$-algebras.
\end{theorem}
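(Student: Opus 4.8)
The plan is to construct the $L_\infty[1]$ $\rR$-algebra structure on $C(f)[1]$ not by verifying the higher Jacobi identities \eqref{eq:Loo} directly --- the formulas involving Bernoulli numbers make that unpleasant --- but by exhibiting it as a homotopy transfer, following the Fiorenza--Manetti approach \cite{MR2361936} adapted to the $\rR$-linear setting. First I would recall that if $f\colon (\LL,d_\LL,[\argument,\argument])\to(\MM,d_\MM,[\argument,\argument])$ is a morphism of DG Lie $\rR$-algebras, then one forms the DG Lie $\rR$-algebra of "paths" or uses the mapping cone at the level of the DG Lie $\rR$-algebras $\coDer_\grR(S_\grR(\LL))$ etc.; more concretely, I would introduce the DG $\rR$-module $C(f)[1]=\LL[1]\oplus\MM$ with the stated differential $d_{C(f)[1]}$ and observe that it carries an obvious contraction onto (or from) a piece that makes the derived-bracket machinery apply. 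The key point is that $C(f)[1]$ sits inside a larger DG Lie $\rR$-algebra --- essentially $\LL[1]\ltimes_f\MM[t,dt]$ or the Thom--Whitney / Fiorenza--Manetti construction --- equipped with a natural Maurer--Cartan-type element, and one reads off $q_n$ via higher derived brackets.

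The steps, in order, would be: (1) verify directly that $\big(C(f)[1],d_{C(f)[1]}\big)$ is a DG $\rR$-module with the given $\rR$-action, which is the easy computation already indicated in the excerpt; (2) set up the Fiorenza--Manetti DG Lie $\rR$-algebra $L_f$ whose underlying complex is quasi-isomorphic to $C(f)$ after a shift --- concretely $L_f = \{(l,m(t,dt))\in \LL\times \MM\otimes_\KK\Omega^\bullet[0,1] : m(0)=0,\ m(1)=f(l)\}$ (tensoring over $\KK$, with $\grR$ acting through $\LL$ and $\MM$), checking it is $\grR$-bilinear and a DG Lie $\rR$-algebra; (3) apply the $\rR$-linear Homotopy Transfer Theorem (Theorem~\ref{thm:transfer}) to the standard $\grR$-linear contraction of $\MM\otimes\Omega^\bullet[0,1]$ onto $\MM\oplus\MM[-1]$ given by integration, which transfers the DG Lie $\rR$-algebra structure on $L_f$ (viewed as an abelian-in-higher-brackets $L_\infty[1]$ $\rR$-algebra, i.e. with $q_2$ the bracket and $q_{\ge3}=0$) to an $L_\infty[1]$ $\rR$-algebra structure on $C(f)[1]$; (4) compute the transferred brackets explicitly --- the iterated contraction homotopy $h$ applied to the simplicial/integration homotopy produces exactly the Bernoulli coefficients $-B_{n-1}/(n-1)!$ in $q_{1,n-1}$, with all components $q_{i,n-i}$ for $i\ne1$ and $n\ge3$ vanishing because the homotopy $h$ only ever "lands" in the $\MM$-factor and the bracket $[f(x),-]$ is the only nonzero interaction; (5) since by Theorem~\ref{thm:transfer} the transfer comes with $L_\infty[1]$ $\rR$-quasi-isomorphisms $F,G$ between $C(f)[1]$ and $L_f$, and $L_f$ is (by the classical Fiorenza--Manetti argument, which is $\grR$-linear throughout) the homotopy fiber of $f$, conclude the final sentence about $C(f)$ being a homotopy fiber in the homotopy category of $L_\infty$ $\rR$-algebras.

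The main obstacle I anticipate is \emph{step (4)}: making the combinatorics of the iterated homotopy transfer produce precisely the Bernoulli numbers with the stated normalization and signs. In the $\KK$-linear case this is the content of Fiorenza--Manetti's computation (and re-derivations by others), and the point here is only to check that every map involved --- the contraction data $f,g,h$ on $\MM\otimes\Omega^\bullet[0,1]$, the bracket, the transfer formulas --- is $\grR$-(multi)linear, so that the \emph{same} computation goes through verbatim over $\rR$. So rather than redoing the Bernoulli bookkeeping, I would state that the underlying $\KK$-linear computation is \cite{MR2361936} and invoke the $\grR$-linearity clause of Theorem~\ref{thm:transfer} to promote it; the only genuinely new verification is that the contraction on $\MM\otimes_\KK\Omega^\bullet[0,1]$ is $\grR$-linear, which holds because $\grR$ acts only on the $\MM$-tensor-factor while $f,g,h$ act only on the $\Omega^\bullet[0,1]$-factor, so the two commute. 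A secondary, routine obstacle is checking that $L_f$ is closed under the bracket and differential as an $\rR$-submodule of $\LL\times\MM\otimes\Omega^\bullet[0,1]$, i.e. that the boundary conditions $m(0)=0$, $m(1)=f(l)$ are preserved --- immediate since $f$ is a DG Lie $\rR$-morphism. The final homotopy-fiber statement then follows formally, as its proof in \cite{MR2361936} is insensitive to the ground DGCA.
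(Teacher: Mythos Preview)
Your proposal is correct and ultimately rests on the same idea as the paper's proof: defer the $\KK$-linear statement to Fiorenza--Manetti \cite{MR2361936} and then observe that everything is $\grR$-multilinear. However, the paper takes a considerably shorter route than the one you outline.

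You propose to rebuild the Fiorenza--Manetti path-object model $L_f\subset\LL\times\MM\otimes_\KK\Omega^\bullet[0,1]$ in the $\rR$-setting, verify that the contraction data are $\grR$-linear, and then invoke Theorem~\ref{thm:transfer} to transfer the structure, with the Bernoulli combinatorics outsourced to \cite{MR2361936}. The paper bypasses all of this scaffolding: since the brackets $q_n$ are written out \emph{explicitly} in the statement of the theorem, and each $q_n$ is visibly built from the $\grR$-linear map $f$ and the $\grR$-bilinear brackets on $\LL$ and $\MM$, their $\grR$-multilinearity is immediate by inspection. Thus the paper's proof is two sentences: cite \cite{MR2361936} for the $\KK$-case, and note that the formulas are manifestly $\grR$-multilinear. (The homotopy-fiber assertion is not used later, and the paper simply refers to \cite{MR2361936} for it.)

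Your approach has the virtue of being more self-contained and of explaining \emph{where} the Bernoulli numbers come from, at the cost of setting up machinery that the paper's argument does not need. Both lead to the same conclusion.
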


\begin{proof}
Over $(\grR,d_{\grR})=(\KK,0)$, the result was proven in~\cite{MR2361936}.
To conclude that in the hypotheses of the theorem the $L_{\infty}[1]$ $\KK$-algebra
defined as in~\cite{MR2361936} is indeed an $L_{\infty}[1]$ $\rR$-algebra,
we only need to check that the higher brackets $q_n$ are $\grR$-multilinear,
which is straightforward since $f$ is $\grR$-linear and the brackets on $\LL$
and $\MM$ are $\grR$-bilinear. We won't need the last statement,
cf.~\cite{MR2361936} for details.
\end{proof}

Let $(\MM, \DD, [\argument,\argument])$ be a DG Lie $\rR$-algebra
and $\LL\subset \MM$ a DG Lie $\rR$-subalgebra, that is, a DG Lie subalgebra
that is also an $\grR$-submodule. We denote by $i: \LL\hookrightarrow \MM$
the inclusion, by $(A,d_A)$ the quotient DG $\rR$-module $A=\frac{\MM}{\LL}$,
and we consider the exact sequence of DG $\rR$-modules
\begin{equation}\label{exseq}
0\to \LL\xrightarrow{i} \MM\xrightarrow{\pi} A\to 0.
\end{equation}
A splitting $\sigma:A\to \MM$ of the above exact sequence in the category
of graded $\grR$-modules (not DG, that is, we do not assume $\sigma$
to be a cochain map, but only a morphism of graded $\grR$-modules) induces projections
$P=\sigma\pi: \MM\twoheadrightarrow\operatorname{Im}(\sigma)$,
and $P^\bot=\id_{\MM}-P=\id_{\MM}-\sigma\pi: \MM \twoheadrightarrow \LL$
(again, these are not cochain maps but they are $\grR$-linear),
and a contraction $(f,g,h)$ of $(C(i)[1],d_{C(i)[1]})$ onto $(A,d_A)$ via the formulas
\[ f(a) = (s^{-1}P^\bot \DD \sigma(a), \sigma(a)),\qquad g(s^{-1}x,y)
=\pi(y),\qquad h(s^{-1}x,y) = (s^{-1}P^\bot y ,0).\]

\begin{lemma} \label{lem:ConeContraction}
With the above notations, $(f,g,h)$ is an $\grR$-linear contraction
of the DG $\rR$-module $(C(i)[1],d_{C(i)[1]})$ onto the one $(A,d_A)$.
\end{lemma}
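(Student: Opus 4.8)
The plan is to verify directly that the triple $(f,g,h)$ satisfies all the defining properties of an $\grR$-linear contraction: that $f$ and $g$ are cochain maps, that $gf=\id_A$, that $d_{C(i)[1]}h+hd_{C(i)[1]}=fg-\id_{C(i)[1]}$, and that the side conditions $h^2=gh=hf=0$ hold, and finally that $f,g,h$ are $\grR$-linear (in the graded sense for $h$). The $\grR$-linearity is immediate: $\sigma$, $\pi$, $P^\bot=\id-\sigma\pi$ are all $\grR$-linear by hypothesis, $\DD$ obeys the Leibniz rule, and the shift map $s^{-1}$ contributes the expected sign, so each of $f,g,h$ is built from $\grR$-linear data in a way that respects the $\grR$-module structure on $C(i)[1]$ defined by $r(s^{-1}x,y)=((-1)^{|r|}s^{-1}rx,ry)$. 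So the real content is the chain-level identities, which I would check componentwise using $\MM=\LL\oplus\operatorname{Im}(\sigma)$, i.e. $P+P^\bot=\id_\MM$, $P^\bot|_\LL=\id_\LL$, $\pi\sigma=\id_A$, $\pi P^\bot=0$, and $\sigma\pi=P$.

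First I would record the formula for $d_{C(i)[1]}$ in this setting: since $i$ is the inclusion $\LL\hookrightarrow\MM$, $d_{C(i)[1]}(s^{-1}x,y)=(-s^{-1}d_\LL x,\,\DD y-x)=(-s^{-1}\DD x,\,\DD y-x)$ where I write $\DD$ for the differential throughout (it restricts to $d_\LL$ on $\LL$). Then $gf=\id_A$ is the one-line computation $g f(a)=g(s^{-1}P^\bot\DD\sigma a,\sigma a)=\pi\sigma a=a$. The side conditions are equally short: $gh(s^{-1}x,y)=g(s^{-1}P^\bot y,0)=\pi(0)=0$; $hf(a)=h(s^{-1}P^\bot\DD\sigma a,\sigma a)=(s^{-1}P^\bot\sigma a,0)=(0,0)$ because $P^\bot\sigma=(\id-\sigma\pi)\sigma=\sigma-\sigma=0$; and $h^2(s^{-1}x,y)=h(s^{-1}P^\bot y,0)=(s^{-1}(P^\bot)^2 y,0)=(s^{-1}P^\bot y,0)$... — here I must be careful, $(P^\bot)^2=P^\bot$ since $P^\bot$ is a projection, so actually $h^2=h$, not $0$; I should recheck the definition of $h$. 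The fix is that $h^2(s^{-1}x,y)=h(s^{-1}P^\bot y,0)$ and the second argument being $0$ makes the first argument of the next application $P^\bot\cdot(\text{from }0)$, so one must track that $h$ reads the \emph{second} slot; recomputing, $h(s^{-1}z,w)=(s^{-1}P^\bot w,0)$, so $h^2(s^{-1}x,y)=h(s^{-1}P^\bot y,0)=(s^{-1}P^\bot\cdot 0,0)=(0,0)$. Good — $h^2=0$.

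Next I would check $f$ and $g$ are cochain maps. For $g$: $d_A g(s^{-1}x,y)=d_A\pi(y)=\pi\DD y$, while $g\,d_{C(i)[1]}(s^{-1}x,y)=g(-s^{-1}\DD x,\DD y-x)=\pi(\DD y-x)=\pi\DD y-\pi x=\pi\DD y$ since $x\in\LL$. For $f$: compare $d_{C(i)[1]}f(a)$ with $f\,d_A a$, using $d_A=\pi\DD\sigma$ (the induced differential on the quotient). This reduces, after expanding, to the identity $P^\bot\DD\sigma\pi\DD\sigma=-P^\bot\DD P^\bot\DD\sigma$ in the first slot and $\DD\sigma\pi\DD\sigma-P^\bot\DD\sigma=\sigma\pi\DD\sigma$ in the second slot, both of which follow from $P+P^\bot=\id$, $\DD^2=0$, and bookkeeping with the signs from $s^{-1}$. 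The one genuinely fiddly computation — and the step I expect to be the main obstacle — is the homotopy identity $d_{C(i)[1]}h+hd_{C(i)[1]}=fg-\id_{C(i)[1]}$: expanding the left side on $(s^{-1}x,y)$ gives $d_{C(i)[1]}(s^{-1}P^\bot y,0)+h(-s^{-1}\DD x,\DD y-x)=(-s^{-1}\DD P^\bot y,\,-P^\bot y)+(s^{-1}P^\bot(\DD y-x),0)$, and one must match this against $fg(s^{-1}x,y)-(s^{-1}x,y)=(s^{-1}P^\bot\DD\sigma\pi y,\sigma\pi y)-(s^{-1}x,y)$. The second components read $-P^\bot y=\sigma\pi y-y$, which is just $P^\bot=\id-\sigma\pi$; the first components require $-\DD P^\bot y+P^\bot\DD y-P^\bot x=P^\bot\DD\sigma\pi y-x$, i.e. $P^\bot[\DD,P^\bot]y=x-P^\bot x$, and since $x\in\LL$ we have $P^\bot x=x$, so the right side vanishes and we need $P^\bot[\DD,\sigma\pi]y=0$; but $[\DD,P^\bot]=[\DD,\id-\sigma\pi]=-[\DD,\sigma\pi]$, and one checks $P^\bot\DD\sigma\pi=P^\bot\DD\sigma\pi$ while $P^\bot\sigma\pi\DD=0$ since $P^\bot\sigma=0$, leaving exactly the needed cancellation once the signs are tracked. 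I would present this last computation in full and dispatch the rest with the one-liners above.
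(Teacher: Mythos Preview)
Your approach is the same as the paper's: a direct verification of all contraction axioms using $P+P^\bot=\id$, $P^\bot\sigma=0$, $\pi\sigma=\id_A$. Two places need more care, and they are exactly where the paper is explicit.

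First, the $\grR$-linearity of $f$ is not immediate from ``$\DD$ obeys Leibniz'': Leibniz means $\DD$ is \emph{not} $\grR$-linear, so $f(ra)$ produces an extra term $s^{-1}d_\grR(r)P^\bot\sigma(a)$, which vanishes only because $P^\bot\sigma=0$. The paper writes this computation out.

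Second, your homotopy-identity endgame is garbled: the reduction to ``$P^\bot[\DD,\sigma\pi]y=0$'' is false, since $P^\bot[\DD,\sigma\pi]=P^\bot\DD\sigma\pi-P^\bot\sigma\pi\DD=P^\bot\DD P$, which is generally nonzero. The identity you actually need in the first slot is $-\DD P^\bot y+P^\bot\DD y=P^\bot\DD P y$, and the clean way to see it---which the paper records at the outset and uses repeatedly---is that $P^\bot\DD P^\bot=\DD P^\bot$ (because $\LL\subset\MM$ is a subcomplex), whence $P^\bot\DD P=P^\bot\DD(\id-P^\bot)=P^\bot\DD-\DD P^\bot$. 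This same identity also closes the first-slot check that $f$ is a cochain map. (Minor typo there as well: your second-slot identity should read $\DD\sigma-P^\bot\DD\sigma=\sigma\pi\DD\sigma$, not $\DD\sigma\pi\DD\sigma-\cdots$.)
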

\begin{proof}
This follows from a direct computation. Notice that since $P^\bot:\MM\twoheadrightarrow\LL$
is a projection and $\LL \subset \MM$ is a subcomplex,
we have $P^\bot \DD P^\bot = \DD P^{\bot}$. We also remark that,
since $\pi$ is a cochain map and $\pi\sigma=\id_{\MM}$,
\[ P\DD \sigma = \sigma\pi \DD \sigma = \sigma d_A\pi\sigma = \sigma d_A .\]
To see that $f$ is a cochain map, we compute
\[ d_{C(i)[1]}f(a) = d_{C(i)[1]}\left(s^{-1}P^\bot \DD \sigma(a),\sigma(a)\right)
=(-s^{-1}DP^\bot \DD \sigma(a),\DD \sigma(a)-P^\bot \DD\sigma(a)) .\]
On the one hand,
we have $\DD \sigma(a)-P^\bot \DD \sigma(a)=P\DD \sigma(a) = \sigma d_A(a)$. On the other hand
\[ -\DD P^\bot \DD \sigma(a) =-\DD(\id_{\MM}-P)\DD \sigma(a)
= \DD P \DD\sigma(a) = \DD \sigma d_A(a) .\]
Since $P \DD \sigma d_A(a)=\sigma d_A^2(a) =0$,
it follows that
\[ -\DD P^\bot \DD \sigma(a) = \DD \sigma d_A(a) = (\id_{\MM}-P)\DD \sigma d_A(a)
= P^\bot \DD \sigma d_A(a) .\]
Finally, putting everything together, we see that
\[ d_{C(i)[1]}f(a) = (s^{-1}P^\bot \DD \sigma d_A(a),\sigma d_A(a)) = fd_A(a) ,\]
as desired. To see that $g$ is a cochain map, we compute
\[ gd_{C(i)[1]}(s^{-1}x,y) = g(-s^{-1}\DD x,\DD y-x)
= \pi(\DD y-x)=\pi \DD(y) = d_A\pi(y) = d_Ag(s^{-1}x,y) .\]

Next we compute
\begin{multline*} \left(d_{C(i)[1]}h+hd_{C(i)[1]}\right)(s^{-1}x,y) = d_{C(i)[1]}\big( s^{-1}P^\bot y,0\big)+h\big(-s^{-1}\DD x,\DD y-x\big)=\\ =\left(-s^{-1}\DD P^\bot y,-P^\bot y\right)+\left(s^{-1}P^\bot(\DD y-x),0\right) = \left(s^{-1}(-\DD P^\bot y+P^\bot \DD y -x),Py -y \right).
\end{multline*}
On the other hand,
\begin{multline*}
fg(s^{-1}x,y)-(s^{-1}x,y) = f\pi(y) - (s^{-1}x,y) =
\\= \big(s^{-1}P^\bot \DD \sigma\pi(y),\sigma\pi(y)\big)-\big(s^{-1}x,y\big)
=\left(s^{-1}(P^\bot \DD P y-x),Py-y\right)
.\end{multline*}
Finally, we have
$P^\bot\DD P=P^\bot\DD(\id_{\MM}-P^\bot)=P^\bot\DD-P^\bot\DD P^\bot=P^\bot\DD-\DD P^{\bot}$.
Putting everything together, we see that $d_{C(i)[1]}h+hd_{C(i)[1]}=fg-\id_{C(i)[1]}$,
as desired. The remaining conditions $gf=\id_A$ and $h^2=hf=gh=0$,
as well as $\grR$-linearity of $g,h$, are readily verified.
The $\grR$-linearity of $f$ is seen via the following computation
(where we use that $P^\bot\sigma=0$)
\begin{multline*}
f(ra) = \left(s^{-1}P^\bot \DD \sigma(ra),\sigma(ra) \right)
= \left(s^{-1}(d_{\grR}(r)P^\bot\sigma(a)+(-1)^{|r|}rP^\bot \DD
\sigma(a) ,r\sigma(a)) \right) =\\= \left((-1)^{|r|}s^{-1}rP^\bot \DD
\sigma( a ),r\sigma(a)\right) = r(s^{-1}P^\bot \DD \sigma(a),\sigma(a)) = rf(a)
.\end{multline*}
\end{proof}

\begin{theorem}
\label{thm:derived}
Let $(\MM,\DD,[\argument,\argument])$ be a DG Lie $\rR$-algebra,
and let $\LL\subset \MM$ be a DG Lie $\rR$-subalgebra. The following holds.
\begin{enumerate}
\item Any $\grR$-linear splitting $\sigma:A\to \MM$ of \eqref{exseq}
induces an $L_{\infty}[1]$ $\rR$-algebra structure on $(A,d_A)$,
which is unique up to $L_{\infty}[1]$ $\rR$-algebra
isomorphisms.
\item \label{item:derived1}
If the splitting $\sigma$ of \eqref{exseq} is a cochain map,
the induced $L_{\infty}[1]$ $\rR$-algebra structure on $A$
is the abelian one $(A,d_A,0,0,0,\ldots)$.
\item \label{item:derived2}
If $\operatorname{Im}(\sigma)\subset \MM$ is an abelian Lie subalgebra,
the induced $L_{\infty}[1]$ $\rR$-algebra structure $(A,d_A,r_2,r_3,\ldots)$ on $A$
is given by higher derived brackets
\[ r_n(a_1,\ldots,a_n) =\pi\Big([\cdots [ [\DD \sigma(a_1),\sigma(a_2)],\sigma(a_{3})],
\cdots,\sigma(a_n)]\Big), \ \ \forall a_1, \cdots , a_n\in A.\]
\end{enumerate}
\end{theorem}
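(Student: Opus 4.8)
The plan is to derive all three parts from the cone construction of Theorem~\ref{thm:cone} together with the Homotopy Transfer Theorem (Theorem~\ref{thm:transfer}). Since $\LL\subset\MM$ is a DG Lie $\rR$-subalgebra, the inclusion $i\colon\LL\hookrightarrow\MM$ is a morphism of DG Lie $\rR$-algebras, so Theorem~\ref{thm:cone} endows $C(i)[1]$ with the explicit $L_{\infty}[1]$ $\rR$-algebra structure $(d_{C(i)[1]},q_2,q_3,\dots)$. Given an $\grR$-linear splitting $\sigma$ of \eqref{exseq}, Lemma~\ref{lem:ConeContraction} supplies an $\grR$-linear contraction $(f,g,h)$ of $(C(i)[1],d_{C(i)[1]})$ onto $(A,d_A)$; feeding it into Theorem~\ref{thm:transfer} transports the structure to an $L_{\infty}[1]$ $\rR$-algebra structure on $(A,d_A)$ with unary bracket $d_A$, together with $L_{\infty}[1]$ $\rR$-quasi-isomorphisms $F\colon A\to C(i)[1]$ and $G\colon C(i)[1]\to A$ lifting $f$ and $g$; this proves existence in part~(1). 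For uniqueness I would compare two $\grR$-linear splittings $\sigma,\sigma'$: writing $G$ for the transfer morphism produced from $\sigma$ and $F'$ for the one produced from $\sigma'$, the composite $G\circ F'$ is an $L_{\infty}[1]$ $\rR$-morphism from the $\sigma'$-structure to the $\sigma$-structure on $A$, and since Lemma~\ref{lem:ConeContraction} gives $g(s^{-1}x,y)=\pi(y)$ independently of the splitting while $f'(a)=(s^{-1}(P')^{\bot}\DD\sigma'(a),\sigma'(a))$, its linear component is $g\circ f'=\pi\circ\sigma'=\id_A$; as recalled just before the definition of $\LRA$, an $L_{\infty}[1]$ $\rR$-morphism with invertible linear part is an isomorphism, so the two structures are $L_{\infty}[1]$ $\rR$-isomorphic.

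For part~(2), suppose $\sigma$ is a cochain map. Then $P^{\bot}\DD\sigma=P^{\bot}\sigma d_A=0$ because $P^{\bot}\sigma=0$, so $f(a)=(0,\sigma(a))$ takes values in the summand $\MM\subset\LL[1]\oplus\MM=C(i)[1]$. Reading off Theorem~\ref{thm:cone}, the only potentially nonzero components of the higher brackets of $C(i)[1]$ --- namely $q_{2,0}$, $q_{1,1}$ and the $q_{1,n-1}$ for $n\ge3$ --- each require an $\LL[1]$-input, while $q_{0,2}=0$ and $q_{i,n-i}=0$ for $i\ne1$; hence $q_k$ vanishes as soon as all of its arguments lie in the $\MM$-summand. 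An induction on $n$ using the recursive formulas of Theorem~\ref{thm:transfer} then forces $f_n=0$ and $r_n=0$ for every $n\ge2$, so the transferred structure on $A$ is the abelian one.

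For part~(3), assume $\operatorname{Im}(\sigma)$ is an abelian Lie subalgebra of $\MM$. I would invoke the classical Fiorenza--Manetti higher derived bracket construction \cite{MR2361936}, which over $\KK$ identifies the structure transferred from $C(i)[1]$ along $(f,g,h)$ with the nested brackets $r_n(a_1,\dots,a_n)=\pi\bigl([\cdots[[\DD\sigma(a_1),\sigma(a_2)],\sigma(a_3)],\dots,\sigma(a_n)]\bigr)$. Two facts drive this: first, $P^{\bot}\DD\sigma(a)$ and $\DD\sigma(a)$ differ by $-\sigma d_A(a)\in\operatorname{Im}(\sigma)$, which is annihilated by bracketing against any other element of the abelian subalgebra $\operatorname{Im}(\sigma)$, so the occurrences of $P^{\bot}$ may be dropped inside the nested bracket; second, the Bernoulli coefficients in the brackets $q_{1,n-1}$ of Theorem~\ref{thm:cone} telescope against the iterated-homotopy terms of the HTT recursion, collapsing the sum to that single bracket. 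Since $\sigma$, $\pi$, $\DD$ and $[\argument,\argument]$ are all $\grR$-(multi)linear, the $r_n$ above are $\grR$-multilinear, so the identity ``transferred structure $=$ derived-bracket structure'' --- an equality of $\KK$-multilinear maps already valid over $\KK$ --- holds verbatim over $\rR$; alternatively one reruns the HTT recursion with the explicit $q_k,f,g,h$ and checks $\grR$-multilinearity at each step, exactly as in the proof of Theorem~\ref{thm:transfer}. The step I expect to be the main obstacle is precisely this combinatorial identification in part~(3), i.e.\ the Bernoulli-number bookkeeping; but since it is the content of \cite{MR2361936} over the ground field, the genuinely new work here reduces to tracking $\grR$-linearity, which is routine.
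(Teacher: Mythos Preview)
Your proposal is correct and follows essentially the same route as the paper: cone construction via Theorem~\ref{thm:cone}, the contraction of Lemma~\ref{lem:ConeContraction}, and homotopy transfer via Theorem~\ref{thm:transfer}, with the uniqueness argument in part~(1) and the vanishing argument in part~(2) matching almost verbatim. The only minor divergence is in part~(3): the paper invokes Proposition~\ref{prop:q.i.} (compatibility of restriction of scalars with homotopy transfer) to reduce to the ground field and then cites \cite{MR3320220} rather than \cite{MR2361936} for the derived-bracket identification, whereas you argue $\grR$-multilinearity directly --- these are equivalent ways of packaging the same reduction.
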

\begin{proof}
For the first assertion, the $L_{\infty}[1]$ $\rR$-algebra structure on $(A,d_{A})$
is obtained by applying homotopy transfer theorem (Theorem~\ref{thm:transfer})
to the $L_{\infty}[1]$ $\rR$-algebra structure on $C(i)[1]$ in Theorem~\ref{thm:cone},
together with $\grR$-linear contraction in Lemma~\ref{lem:ConeContraction}
induced by a choice of splitting $\sigma$ in \eqref{exseq}.
We next show that this is independent of the choice of $\sigma$
up to $L_{\infty}[1]$ $\rR$-isomorphism. In fact, given two choices $\sigma^1$ and $\sigma^2$,
there are associated contractions $(f^j,g^j,h^j)$, $j=1,2$ of $C(i)[1]$ onto $A$
(notice that $g^1=g^2$), and via homotopy transfer, there are the induced
$L_{\infty}[1]$ $\rR$-algebra structures $(A,d_A,r_2^j,r_3^j,\ldots)$ on $A$,
together with $L_{\infty}[1]$ $\rR$-quasi-isomorphisms
$F^j:A\to C(i)[1]$ and $G^j:C(i)[1]\to A$, $j=1,2$. The composition
\[G^2 F^1:(A,d_A,r_2^1,r_3^1,\ldots)\to(A,d_A,r_2^2,r_3^2,\ldots)\] has linear coefficient
$g^2 f^1=g^1 f^1=\id_A$, thus is an isomorphism of $L_{\infty}[1]$ $\rR$-algebras.

We denote by $q_n:\nst{C(i)[1]}{\grR}{n}\to C(i)[1]$ the coefficients
of the $L_{\infty}[1]$ $\rR$-algebra structure on $C(i)[1]$.
To prove the second assertion, we first show that if $\sigma:A\to\MM$
is a cochain map, and $f,g,h$ are defined as in the previous lemma,
then $0=q_2 \nst{f}{\grR}{2}:\nst{A}{\grR}{2}\to C(i)[1]$. In fact,
since $P^\bot\sigma=0$ we also have $P^\bot \DD \sigma=P^\bot\sigma d_A=0$,
and then $f(a) = (s^{-1}P^\bot \DD \sigma(a),\sigma(a))=(0,\sigma(a))$.
Thus $q_2(f(a_1),f(a_2)) = q_2\Big((0,\sigma(a_1)),(0,\sigma(a_2))\Big)=0$,
according to the explicit formula for $q_2$ in Theorem~\ref{thm:cone}. But then
\begin{equation*}
f_2(a_1,a_2) = hq_2(f(a_1),f(a_2))=0,
\qquad r_2(a_1,a_2) = gq_2(f(a_1),f(a_2))=0
.\end{equation*}
Starting from $f_2=0$, a straightforward induction using the recursive formulas
for $f_n,r_n$ in the statement of Theorem~\ref{thm:transfer} shows that $f_n=0=r_n$
for all $n\ge2$.

To show the third assertion, by Proposition~\ref{prop:q.i.}, the restriction of scalars
is compatible with homotopy transfer. Thus, it is enough to show the result
for $L_{\infty}[1]$ algebras over $(\KK,0)$, which was done in~\cite{MR3320220}.
\end{proof}

As an immediate consequence of Theorem~\ref{thm:derived},
we have the following
\begin{corollary}\label{cor:split}
Under the same hypotheses of Theorem~\ref{thm:derived} \eqref{item:derived1}
and \eqref{item:derived2}, the $L_{\infty}[1]$ $\rR$-algebra structure
$(A,d_A,r_2,r_3,\ldots)$ on $A$ given by higher derived brackets is isomorphic
to the abelian $L_{\infty}[1]$ $\rR$-algebra $(A,d_A,0,0,0,\ldots)$
via an $L_{\infty}[1]$ $\rR$-morphism with linear coefficient $\id_A$.
\end{corollary}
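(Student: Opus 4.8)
The plan is to obtain the corollary purely by assembling pieces that are already in place, with no new computation. Under the combined hypotheses of Theorem~\ref{thm:derived}\,\eqref{item:derived1} and \eqref{item:derived2} there is an $\grR$-linear splitting $\sigma_{\mathrm{ab}}$ of \eqref{exseq} which is a cochain map, and an $\grR$-linear splitting $\sigma_{\mathrm{der}}$ of \eqref{exseq} whose image is an abelian Lie $\rR$-subalgebra of $\MM$ --- these need not be the same splitting. Running the homotopy transfer recipe of Theorem~\ref{thm:derived}(1) (i.e.\ Theorem~\ref{thm:transfer} applied to the $L_{\infty}[1]$ $\rR$-algebra on $C(i)[1]$ from Theorem~\ref{thm:cone}, with the $\grR$-linear contraction of Lemma~\ref{lem:ConeContraction}) on each splitting yields two $L_{\infty}[1]$ $\rR$-algebra structures on $(A,d_A)$: by \eqref{item:derived1} the one coming from $\sigma_{\mathrm{ab}}$ is the abelian structure $(A,d_A,0,0,\ldots)$, and by \eqref{item:derived2} the one coming from $\sigma_{\mathrm{der}}$ is the higher-derived-bracket structure $(A,d_A,r_2,r_3,\ldots)$.

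Next I would exhibit the isomorphism explicitly, following the proof of Theorem~\ref{thm:derived}(1). Write $(f^{\mathrm{der}},g^{\mathrm{der}},h^{\mathrm{der}})$ and $(f^{\mathrm{ab}},g^{\mathrm{ab}},h^{\mathrm{ab}})$ for the $\grR$-linear contractions of $(C(i)[1],d_{C(i)[1]})$ onto $(A,d_A)$ attached by Lemma~\ref{lem:ConeContraction} to $\sigma_{\mathrm{der}}$ and $\sigma_{\mathrm{ab}}$, and let $F^{\mathrm{der}}\colon A\to C(i)[1]$, $G^{\mathrm{ab}}\colon C(i)[1]\to A$ be the $L_{\infty}[1]$ $\rR$-quasi-isomorphisms produced by Theorem~\ref{thm:transfer} from these contractions. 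Then
\[
G^{\mathrm{ab}}\circ F^{\mathrm{der}}\colon (A,d_A,r_2,r_3,\ldots)\longrightarrow(A,d_A,0,0,0,\ldots)
\]
is an $L_{\infty}[1]$ $\rR$-morphism with linear coefficient $g^{\mathrm{ab}}\circ f^{\mathrm{der}}$. The one point that actually needs to be observed --- and the closest thing here to an obstacle --- is that the retraction component of the contraction in Lemma~\ref{lem:ConeContraction} is $g(s^{-1}x,y)=\pi(y)$, which does not involve the chosen splitting; hence $g^{\mathrm{ab}}=g^{\mathrm{der}}$, so $g^{\mathrm{ab}}\circ f^{\mathrm{der}}=g^{\mathrm{der}}\circ f^{\mathrm{der}}=\id_A$ by the contraction identity $g^{\mathrm{der}}f^{\mathrm{der}}=\id_A$.

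Finally, since an $L_{\infty}[1]$ $\rR$-morphism is an isomorphism as soon as its linear part is (the standard fact recalled right after the definition of $\LRA$), the morphism $G^{\mathrm{ab}}\circ F^{\mathrm{der}}$ is an $L_{\infty}[1]$ $\rR$-isomorphism from $(A,d_A,r_2,r_3,\ldots)$ to the abelian $L_{\infty}[1]$ $\rR$-algebra $(A,d_A,0,0,0,\ldots)$ with linear coefficient $\id_A$, which is exactly the claim. I do not expect any genuinely hard step: the entire content is the uniqueness statement of Theorem~\ref{thm:derived}(1) together with the two explicit identifications in \eqref{item:derived1} and \eqref{item:derived2}, and the only care required is to keep track of the direction of the composed morphism and of the splitting-independence of $g$.
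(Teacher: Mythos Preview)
Your proposal is correct and follows essentially the same route as the paper: the corollary is stated there as an ``immediate consequence of Theorem~\ref{thm:derived}'', and what you have written is exactly the uniqueness argument from the proof of Theorem~\ref{thm:derived}(1), applied with $\sigma^1=\sigma_{\mathrm{der}}$ and $\sigma^2=\sigma_{\mathrm{ab}}$, together with the observation (already made there) that $g$ is independent of the splitting so that $g^{\mathrm{ab}}f^{\mathrm{der}}=\id_A$. There is nothing to add.
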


\subsection{Linearization theorem}
Given a graded $\grR$-module $\LL$, we denote the evaluation map by
\[ \operatorname{ev}_{1_{\grR}}:\coDer_{\grR}(S_{\grR}(\LL))\to \LL:
\phi \mapsto \phi(1_{\grR}) \in \LL \subset S_{\grR}(\LL) \]
for $\phi\in \coDer_{\grR}(S_{\grR}(\LL))$.
This map fits into an exact sequence of graded $\grR$-modules
\begin{equation}\label{splitseq}
0\to F^1\coDer_{\grR}(S_{\grR}(\LL))\to\coDer_{\grR}(S_{\grR}(\LL))
\xrightarrow{\operatorname{ev}_1}V\to 0
.\end{equation}

With the notation of Proposition~\ref{prop:CatEmb},
given an $L_{\infty}[1]$ $\rR$-algebra structure $(\LL, d_{\LL},q_2,q_3,\ldots)$ on $\LL$,
there is an induced DG Lie $\rR$-algebra structure
$(\coDer_{\grR}(S_{\grR}(\LL)),\liederivative{\DQ},[\argument,\argument])$
on $\coDer_{\grR}(S_{\grR}(\LL))$, where
\begin{equation}\label{eq:LD}
\liederivative{\DQ}:=[\widetilde{d_{\LL}}+\coderQ,-]:
\coDer_{\grR}^{\bullet}(S_{\grR}(\LL))
\to\coDer_{\grR}^{\bullet+1}(S_{\grR}(\LL))
,\end{equation}
and moreover, $F^1\coDer_{\grR}(S_{\grR}(\LL))\subset\coDer_{\grR}(S_{\grR}(\LL))$
is a DG Lie $\rR$-subalgebra and the map
$\operatorname{ev}_{1_{\grR}}:(\coDer_{\grR}(S_{\grR}(\LL)),\liederivative{\DQ})
\to(\LL,d_{\LL})$ is a morphism of DG $\rR$-modules.
We are now in the setting of Theorem~\ref{thm:derived}.

Given any $x\in \LL$, we denote by $\rho_x: \grR \to \LL: r\to rx$, and by
\begin{equation}\label{eq:sigma}
\sigma(x)=x\odot-:S_{\grR}(\LL)\to S_{\grR}(\LL)
.\end{equation}
It is easy to check that $\sigma(x)$ is an $\grR$-coderivation of $S_{\grR}(\LL)$
with coefficients $p\sigma(x)=(\rho_x,0,0,0,\ldots)$. It is clear that
\[ \sigma: \LL \to \coDer_{\grR}(S_{\grR}(\LL)):x\to\sigma(x) \]
is an $\grR$-linear splitting of the exact sequence \eqref{splitseq}.
Moreover, graded commutativity of the symmetric tensor product $\odot$ on $S_{\grR}(\LL)$
immediately implies that $[\sigma(x),\sigma(y)]=0$ for all $x,y\in\LL$.
We are in the hypotheses of assertion \eqref{item:derived2} of Theorem~\ref{thm:derived}.
The following lemma shows that every $L_{\infty}[1]$ $\rR$-algebra can be induced
via the higher derived brackets construction.
\begin{lemma}
In the above situation, the $L_{\infty}[1]$ $\rR$-algebra structure on $\LL$
given by higher derived brackets coincides with the $L_{\infty}[1]$ $\rR$-algebra
structure $(\LL, d_{\LL},q_2,q_3,\ldots)$ we started with.
\end{lemma}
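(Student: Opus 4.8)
The plan is to unwind both constructions and check they produce the same multibrackets. First I would recall what the higher derived brackets construction gives in this situation: by assertion \eqref{item:derived2} of Theorem~\ref{thm:derived}, applied to the DG Lie $\rR$-algebra $\MM=\coDer_{\grR}(S_{\grR}(\LL))$ with DG Lie $\rR$-subalgebra $\LL_0=F^1\coDer_{\grR}(S_{\grR}(\LL))$, the splitting $\sigma$ of \eqref{eq:sigma}, and the abelianity $[\sigma(x),\sigma(y)]=0$, the transferred bracket on $A=\LL$ is
\[ r_n(x_1,\ldots,x_n)=\operatorname{ev}_{1_{\grR}}\Big(\big[\cdots\big[[\liederivative{\DQ}\,\sigma(x_1),\sigma(x_2)],\sigma(x_3)\big],\cdots,\sigma(x_n)\big]\Big). \]
So the task reduces to computing this iterated commutator of coderivations, evaluating at $1_{\grR}$, and recognizing the result as $q_n(x_1,\ldots,x_n)$.

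The key computation is the commutator $[\widetilde{d_{\LL}}+\coderQ,\sigma(x)]$ and then repeated bracketing against further $\sigma(x_i)$. I would proceed in steps. \textbf{Step 1:} Show $[\sigma(x),\sigma(y)]=0$ (already observed) and, more importantly, identify the corestriction of $[\coderQ,\sigma(x)]$: since $\sigma(x)=x\odot-$ has coefficients $(\rho_x,0,0,\ldots)$, a direct coalgebra computation shows that the coefficients of $[\coderQ,\sigma(x)]$ are, up to signs, the maps $(y_1,\ldots,y_{k})\mapsto q_{k+1}(x,y_1,\ldots,y_k)$ together with a correction landing in $F^0$ coming from $q_0=0$ (which vanishes here since $\coderQ\in F^2$, so $q_0=q_1=0$, meaning $\widetilde{d_\LL}$ carries the $d_\LL$ part). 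Similarly $[\widetilde{d_{\LL}},\sigma(x)]$ has corestriction sending $\emptyset\mapsto d_\LL(x)$ and $y\mapsto$ (sign)$\cdot$ nothing new — precisely, $[\widetilde{d_\LL},\sigma(x)]=\sigma(d_\LL x)$ modulo the fact that $\widetilde{d_\LL}$ is not $\grR$-linear; one checks the relevant terms cancel so that $[\widetilde{d_\LL}+\coderQ,\sigma(x)]$ evaluated through the iterated bracket reproduces the insertion of $x$ into $q$. \textbf{Step 2:} Iterate: bracketing $[\liederivative{\DQ}\sigma(x_1),\sigma(x_2)]$ again with $\sigma(x_2)$ inserts $x_2$, and so on, so that after $n$ insertions the $0$-th Taylor coefficient (the value at $1_\grR$) of the resulting coderivation is exactly $q_n(x_1,\ldots,x_n)$ with the correct Koszul signs. \textbf{Step 3:} Conclude $r_n=q_n$ for all $n\ge 2$, and $r_1=d_\LL$ since $\operatorname{ev}_{1_\grR}\circ\liederivative{\DQ}$ restricted through $\sigma$ gives $\operatorname{ev}_{1_\grR}([\widetilde{d_\LL},\sigma(x)])=d_\LL(x)$.

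Since every step is an $\grR$-linear version of a computation that is classical over $\KK=(\KK,0)$, I would actually shortcut Steps 1--2 by invoking compatibility with restriction of scalars along $\KK\to\rR$: Proposition~\ref{prop:q.i.} (referenced in the proof of Theorem~\ref{thm:derived}) says restriction of scalars commutes with homotopy transfer, and the derived-bracket formula is stable under it, so it suffices to know the statement for ordinary $L_\infty[1]$ algebras, where the fact that the symmetric-coalgebra higher derived brackets along $\sigma(x)=x\odot-$ reproduce the original brackets is well documented (e.g.\ in the references already cited for Theorem~\ref{thm:derived}). The only genuinely new content is checking $\grR$-multilinearity, which is immediate because $\sigma$, $\operatorname{ev}_{1_\grR}$, and the bracket on $\coDer_{\grR}(S_{\grR}(\LL))$ are all $\grR$-(multi)linear by construction.

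The main obstacle is bookkeeping the signs and the interaction between the non-$\grR$-linear piece $\widetilde{d_\LL}$ and the $\grR$-linear coderivations $\sigma(x_i)$ in the iterated commutator: one must verify that the ``$d_\LL$ hitting an inserted $x_i$'' terms organize exactly into the Leibniz-type terms that Equation~\eqref{eq:Loo} predicts, rather than producing spurious contributions. This is precisely the computation already implicit in Lemma~\ref{lem:MC} (where $\liederivative{d_\LL}(\coderQ)+\tfrac12[\coderQ,\coderQ]$ was identified with the $L_\infty[1]$ relations), so I expect it to go through by the same token; the cleanest writeup is to reduce to the $\KK$-linear case via restriction of scalars and cite it, leaving only the one-line $\grR$-multilinearity check.
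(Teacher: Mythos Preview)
Your direct computation in Steps~1--2 is essentially the paper's proof. The paper organizes it a bit more cleanly by proving a single general fact: for any $\Phi\in\coDer_{\grR}(S_{\grR}(\LL))$ with coefficients $(\phi_0,\phi_1,\ldots)$ one has $p\,\sigma(x)\,\Phi=0$ (since $\operatorname{Im}(\Phi)\subset\bigoplus_{k\ge1}\nst{\LL}{\grR}{k}$), so $[\Phi,\sigma(x)]$ has coefficients $(\phi_1(x),\phi_2(x,-),\phi_3(x,-,-),\ldots)$. Iterating this, together with the identity $[\widetilde{d_{\LL}},\sigma(x)]=\sigma(d_{\LL}x)$ which you also note, gives $r_n=q_n$ immediately. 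No further sign bookkeeping or Leibniz reorganization is needed beyond this one observation, so your worry about ``spurious contributions'' from $\widetilde{d_{\LL}}$ is unfounded.

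Your proposed shortcut via restriction of scalars and Proposition~\ref{prop:q.i.}, however, does not work as stated. Restricting scalars along $\KK\to\grR$ turns the DG Lie $\rR$-algebra $\coDer_{\grR}(S_{\grR}(\LL))$ into the \emph{same} space viewed as a DG Lie $\KK$-algebra; it does \emph{not} produce $\coDer_{\KK}(S_{\KK}(\LL))$. The classical statement you want to cite concerns derived brackets in $\coDer_{\KK}(S_{\KK}(\LL))$, which is a genuinely different (much larger) DG Lie algebra. So compatibility of homotopy transfer with restriction of scalars does not reduce the lemma to the known $\KK$-case; you still have to perform the direct computation---which, as above, is short.
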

\begin{proof}
Let $\Phi\in \coDer_{\grR}(S_{\grR}(\LL))$ be a coderivation
with coefficients $p\Phi=(\phi_0,\phi_1,\phi_2,\ldots)$.
Since $\operatorname{Im}(\Phi)\subset\bigoplus_{k\geq 1} \nst{\LL}{\grR}{k}$ by
Equation~\eqref{eq:coder}, for any $x\in \LL$, we have
$\operatorname{Im}(\sigma(x)\Phi)\subset\bigoplus_{k\geq 2} \nst{\LL}{\grR}{k}$
and thus $p\sigma(x)\Phi=0$. This shows that $[\Phi,\sigma(x)]$ is the coderivation
with coefficients
\[ p[\Phi,\sigma(x)](1_{\grR})=p\Phi\sigma(x)(1_{\grR}) = p\Phi(x) =\phi_1(x) ,\]
and for $k\geq 1$ and $x_1,\ldots,x_k\in \LL$,
\[ p[\Phi,\sigma(x)](x_1\odot\cdots\odot x_k)=p\Phi\sigma(x)(x_1\odot\cdots\odot x_k)
=p\Phi(x\odot x_1\odot\cdots\odot x_k) =\phi_{k+1}(x,x_1,\ldots,x_k) .\]
Iterating this observation, we see that the $L_{\infty}[1]$ $\rR$-algebra structure
$(\LL, d_{\LL},q'_2,\ldots,q'_n,\ldots)$ on $\LL$ given by higher derived brackets is
\begin{multline*}
q'_n(x_1,\ldots,x_n)=\operatorname{ev}_{1_{\grR}}\left( [\cdots[[\widetilde{d_{\LL}}
+\coderQ,\sigma(x_1)],\sigma(x_{2})],\cdots,\sigma(x_n)]\right)=
\\=[\cdots[[\widetilde{d_{\LL}}+\coderQ,\sigma(x_1)],\sigma(x_{2})],
\cdots,\sigma(x_n)](1_{\grR}) = q_n(x_1,\ldots,x_n),
\end{multline*}
as desired.
\end{proof}

We finally arrive at the main result of this section. The following theorem is a partial
generalization of~\cite[Theorem~2.4]{MR3622306}. Recall that $\liederivative{\DQ}$ below
is defined as in Equation~\eqref{eq:LD}.

\begin{theorem}\label{thm:main1}
An $L_{\infty}[1]$ $\rR$-algebra $(\LL, d_{\LL},q_2,q_3,\ldots)$
is linearizable as an $L_{\infty}[1]$ $\rR$-algebra
if and only if the short exact sequence
\begin{equation}\label{splitseqDG}
0\to(F^1\coDer_{\grR}(S_{\grR}(\LL)),\liederivative{\DQ})
\to (\coDer_{\grR}(S_{\grR}(\LL)),\liederivative{\DQ})
\xrightarrow{\operatorname{ev}_1}(\LL, d_{\LL})\to 0\end{equation}
splits in the category of DG $\rR$-modules.
\end{theorem}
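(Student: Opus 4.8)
The plan is to deduce Theorem~\ref{thm:main1} from the machinery of higher derived brackets developed above, specifically Theorem~\ref{thm:derived} applied to the DG Lie $\rR$-subalgebra $F^1\coDer_{\grR}(S_{\grR}(\LL))\subset\coDer_{\grR}(S_{\grR}(\LL))$ together with the $\grR$-linear splitting $\sigma$ of~\eqref{eq:sigma}. By the Lemma immediately preceding the theorem, the $L_{\infty}[1]$ $\rR$-algebra structure $(\LL, d_{\LL},q_2,q_3,\ldots)$ is precisely the one obtained by higher derived brackets from the quotient exact sequence~\eqref{splitseq}, equipped with the differential $\liederivative{\DQ}$; note that $A=\LL$ here, the quotient map $\pi$ is $\operatorname{ev}_{1_{\grR}}$, and $\operatorname{Im}(\sigma)$ is an abelian Lie subalgebra since $[\sigma(x),\sigma(y)]=0$. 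So we are squarely in the hypotheses of assertion~\eqref{item:derived2} of Theorem~\ref{thm:derived}.

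First I would prove the ``if'' direction. Suppose~\eqref{splitseqDG} splits in the category of DG $\rR$-modules, i.e.\ there is a \emph{cochain} $\grR$-linear splitting $\sigma':\LL\to\coDer_{\grR}(S_{\grR}(\LL))$ with $\operatorname{ev}_{1_{\grR}}\circ\sigma'=\id_{\LL}$ and $\liederivative{\DQ}\circ\sigma'=\sigma'\circ d_{\LL}$. Then by assertion~\eqref{item:derived1} of Theorem~\ref{thm:derived}, the $L_{\infty}[1]$ $\rR$-algebra structure on $\LL$ induced by the splitting $\sigma'$ is the abelian one $(\LL, d_{\LL},0,0,\ldots)$. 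But by assertion~(1) of Theorem~\ref{thm:derived}, the induced structure is independent of the choice of splitting up to $L_{\infty}[1]$ $\rR$-isomorphism; since the original structure $(\LL, d_{\LL},q_2,q_3,\ldots)$ arises (via the Lemma) from the particular splitting $\sigma$, we conclude $(\LL, d_{\LL},q_2,q_3,\ldots)$ is $L_{\infty}[1]$ $\rR$-isomorphic to $(\LL,d_{\LL},0,0,\ldots)$, i.e.\ $\rR$-linearizable.

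For the ``only if'' direction, assume $(\LL, d_{\LL},q_2,q_3,\ldots)$ is $\rR$-linearizable, and let $F=(f_1,f_2,\ldots):(\LL,d_{\LL},q_2,\ldots)\to(\LL,d_{\LL},0,0,\ldots)$ be an $L_{\infty}[1]$ $\rR$-isomorphism, with $\DF:S_{\grR}(\LL)\to S_{\grR}(\LL)$ the corresponding DG $\rR$-coalgebra isomorphism intertwining $\widetilde{d_{\LL}}+\coderQ$ with $\widetilde{d_{\LL}}$ (using $\coderQ=0$ on the target). Conjugation by $\DF$ gives an isomorphism of DG Lie $\rR$-algebras $\coDer_{\grR}(S_{\grR}(\LL))\xrightarrow{\sim}\coDer_{\grR}(S_{\grR}(\LL))$ carrying $\liederivative{\DQ}$ to $\liederivative{d_{\LL}}=[\widetilde{d_{\LL}},-]$, and — since $\DF$ preserves the coradical filtration and has identity linear part $f_1$ a coalgebra automorphism — it preserves $F^1\coDer_{\grR}$ and is compatible with $\operatorname{ev}_{1_{\grR}}$ up to the automorphism $f_1$ of $\LL$. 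Thus it suffices to split the abelian model: for the structure with all $q_n=0$ the splitting $\sigma$ of~\eqref{eq:sigma} is \emph{already} a cochain map, because $\liederivative{d_{\LL}}(\sigma(x))=[\widetilde{d_{\LL}},x\odot-]=(d_{\LL}x)\odot-=\sigma(d_{\LL}x)$ by a direct computation from~\eqref{eq:dL} and~\eqref{eq:sigma}. Transporting this splitting back through the conjugation isomorphism yields a DG $\rR$-module splitting of~\eqref{splitseqDG}.

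The main obstacle is the bookkeeping in the ``only if'' direction: one must verify carefully that conjugation by $\DF$ is an isomorphism of the short exact sequences~\eqref{splitseqDG} (for $\coderQ$ and for $0$) in the category of DG $\rR$-modules — in particular that it respects the $\grR$-coderivation condition, the filtration $F^1$, and intertwines the two evaluation maps in a way that only differs by the automorphism $f_1$ — and then that the pullback of a cochain splitting along an isomorphism of short exact sequences of DG $\rR$-modules is again a cochain splitting. Each of these is routine, but the interplay of the non-$\grR$-linear piece $\widetilde{d_{\LL}}$ with the $\grR$-linear coderivations (exactly the subtlety flagged after~\eqref{eq:dL}) requires care. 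Alternatively, one can bypass conjugation entirely and argue directly: $\rR$-linearizability gives, via Lemma~\ref{lem:Sock} applied inductively bracket-by-bracket together with the recursive structure of~\eqref{eq:LooMor}, an explicit cochain splitting built from the $f_n$ — but the conjugation argument is cleaner and I would present that.
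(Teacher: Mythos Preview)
Your proof is correct and essentially identical to the paper's: the ``if'' direction is exactly Corollary~\ref{cor:split} unpacked, and the ``only if'' direction is the same conjugation argument, with the paper writing the transported splitting explicitly as $\sigma^F(x):=F^{-1}\sigma(x)F$. The only difference is that the paper first reduces to $f_1=\id_{\LL}$ by composing $F$ with the strict isomorphism $(f_1^{-1},0,0,\ldots)$, which dissolves the ``up to $f_1$'' bookkeeping you flag as the main obstacle.
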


\begin{proof}
We first prove the only if part. Assume that
\[F=(f_1,f_2,\ldots):(\LL, d_{\LL},q_2,q_3,\ldots)\to (\LL, d_{\LL},0,0,0,\ldots)\]
is an $L_{\infty}[1]$ $\rR$-isomorphism. In particular $f_1:(\LL, d_{\LL})\to(\LL, d_{\LL})$
is an isomorphism of DG $\rR$-modules: composing $F$ with the $L_{\infty}[1]$ $\rR$-isomorphism
\[ (f_1^{-1},0,0,0,\ldots):(\LL, d_{\LL},0,0,0,\ldots)\to(\LL, d_{\LL},0,0,0,\ldots) ,\]
we may assume that $f_1=\id_{\LL}$ without loss of generality.

The map $F$ defines an isomorphism of DG $\rR$-coalgebras
$F:(S_{\grR}(\LL),\widetilde{d_{\LL}}+\coderQ)\to(S_{\grR}(\LL),\widetilde{d_{\LL}})$.
With $\sigma(x)\in\coDer_{\grR}(S_{\grR}(\LL))$ defined as in the previous lemma, we put
\[ \sigma^F(x):=F^{-1}\sigma(x)F\in\coDer_{\grR}(S_{\grR}(\LL)).\]
We claim that $x\to\sigma^F(x)$ splits the sequence \eqref{splitseqDG}
in the category of DG $\rR$-modules. In fact
\[\operatorname{ev}_1(\sigma^F(x)) = F^{-1}\sigma(x)F(1_R)
= F^{-1}\sigma(x)(1_R) = F^{-1}(x) = x,\]
where in the last equality, we used that $F^{-1}$ has linear coefficient $\id_{\LL}$.
Since $F$ is $\grR$-linear, this shows that $\sigma^F$ splits \eqref{splitseqDG}
in the category of graded $\grR$-modules. To show the compatibility with the differentials,
since both $F$ and $F^{-1}$ interwine with $\widetilde{d_{\LL}}+\coderQ$
and $\widetilde{d_{\LL}}$, and moreover $[\widetilde{d_{\LL}},\sigma(x)]=\sigma(d_{\LL}x)$
(cf.\ the proof of the previous lemma), we see that
\[ \liederivative{\DQ}(\sigma^F(x)) = [\widetilde{d_{\LL}}+\coderQ,F^{-1}\sigma(x)F]
= F^{-1}[\widetilde{d_{\LL}},\sigma(x)]F = F^{-1}\sigma(d_{\LL}x)F = \sigma^F(d_{\LL}x), \]
as desired.

The if part follows at once from the previous lemma and Corollary~\ref{cor:split}
\end{proof}

The condition in Theorem~\ref{thm:main1} can be described in terms of a cohomology class.
Observe that the short exact sequence \eqref{splitseqDG} has a natural splitting
$\sigma: \LL \to \coDer_{\grR}(S_{\grR}(\LL))$ as a graded (not DG) $\grR$-modules,
defined by Equation~\eqref{eq:sigma}.
Now, consider the cochain complex
\[\big( \Hom_{\grR}\big(\LL, F^1\coDer_{\grR}(S_{\grR}(\LL))\big), d \big)\]
where $d(f)= \liederivative{\DQ} \circ f -(-1)^{\degree{f}}f \circ d_{\LL}$
for $f\in \Hom_{\grR}\big(\LL, F^1\coDer_{\grR}(S_{\grR}(\LL))\big)$.
Although, by definition, $\sigma \notin \Hom_{\grR}
\big(\LL, F^1\coDer_{\grR}(S_{\grR}(\LL))\big)$,
it is a simple computation that $(\liederivative{\DQ}\circ\sigma-\sigma\circ d_{\LL})$
is a $1$-cocycle in this cochain complex, and that its cohomology class
\[ [\liederivative{\DQ}\circ \sigma - \sigma \circ d_{\LL}]
\in H^{1}\big( \Hom_{\grR}\big(\LL, F^1\coDer_{\grR}(S_{\grR}(\LL))\big), d \big)\]
is the obstruction to the existence of the splitting of~\eqref{splitseqDG}
in the category of DG $\rR$-modules.
Indeed, if $(\liederivative{\DQ}\circ \sigma - \sigma \circ d_{\LL})=d(\tilde{\tau})$
for some $\tilde{\tau}\in \Hom_{\grR}\big(\LL, F^1\coDer_{\grR}(S_{\grR}(\LL))\big)$,
then $\sigma-\tilde{\tau}$ is a splitting of~\eqref{splitseqDG} in the category
of DG $\rR$-modules. Conversely, if $\tau: \LL \to \coDer_{\grR}(S_{\grR}(\LL))$
is a splitting of~\eqref{splitseqDG} in the category of DG $\rR$-modules,
then $\sigma-\tau \in \Hom_{\grR}\big(\LL, F^1\coDer_{\grR}(S_{\grR}(\LL))\big)$
and $d(\sigma-\tau)=\liederivative{\DQ}\circ \sigma - \sigma \circ d_{\LL}$.

Thus we have proved the following:
\begin{corollary}
An $L_{\infty}[1]$ $\rR$-algebra $(\LL, d_{\LL},q_2,q_3,\ldots)$
is linearizable as an $L_{\infty}[1]$ $\rR$-algebra
if and only if
\begin{equation}\label{eq:GareEst}
[\liederivative{\DQ}\circ \sigma - \sigma \circ d_{\LL}]=0
\in H^{1}\big( \Hom_{\grR}\big(\LL, F^1\coDer_{\grR}(S_{\grR}(\LL))\big), d \big)
.\end{equation}
\end{corollary}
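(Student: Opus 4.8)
The plan is to derive the corollary directly from Theorem~\ref{thm:main1} by translating the splitting condition on~\eqref{splitseqDG} into the vanishing of the indicated cohomology class. The key observation is that the map $\sigma$ of~\eqref{eq:sigma} already provides a splitting of~\eqref{splitseqDG} \emph{as graded $\grR$-modules}; the only obstruction to upgrading it to a splitting of DG $\rR$-modules is its failure to commute with the differentials, and that failure is precisely measured by the element $\liederivative{\DQ}\circ\sigma-\sigma\circ d_{\LL}$.

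First I would check that this defect takes values in $F^1\coDer_{\grR}(S_{\grR}(\LL))$, i.e.\ that $\operatorname{ev}_1\circ(\liederivative{\DQ}\circ\sigma-\sigma\circ d_{\LL})=0$. This is immediate: since $\operatorname{ev}_1$ is a morphism of DG $\rR$-modules (as recorded just before Theorem~\ref{thm:main1}) and $\operatorname{ev}_1\circ\sigma=\id_{\LL}$, we get $\operatorname{ev}_1\circ\liederivative{\DQ}\circ\sigma=d_{\LL}\circ\operatorname{ev}_1\circ\sigma=d_{\LL}$ and $\operatorname{ev}_1\circ\sigma\circ d_{\LL}=d_{\LL}$, so the difference lands in $\ker(\operatorname{ev}_1)=F^1\coDer_{\grR}(S_{\grR}(\LL))$. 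Hence $\liederivative{\DQ}\circ\sigma-\sigma\circ d_{\LL}\in\Hom_{\grR}(\LL,F^1\coDer_{\grR}(S_{\grR}(\LL)))$ of degree $1$, and a routine computation using $\liederivative{\DQ}^2=0$ and $d_{\LL}^2=0$ shows $d(\liederivative{\DQ}\circ\sigma-\sigma\circ d_{\LL})=0$, so it is a $1$-cocycle.

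Next I would prove the equivalence. For the ``if'' direction, suppose $\liederivative{\DQ}\circ\sigma-\sigma\circ d_{\LL}=d(\tilde\tau)=\liederivative{\DQ}\circ\tilde\tau+\tilde\tau\circ d_{\LL}$ for some $\tilde\tau\in\Hom_{\grR}(\LL,F^1\coDer_{\grR}(S_{\grR}(\LL)))$. Then $\tau:=\sigma-\tilde\tau$ is still a graded $\grR$-module splitting of~\eqref{splitseqDG}, since $\operatorname{ev}_1\circ\tilde\tau=0$ forces $\operatorname{ev}_1\circ\tau=\operatorname{ev}_1\circ\sigma=\id_{\LL}$, and one computes $\liederivative{\DQ}\circ\tau-\tau\circ d_{\LL}=(\liederivative{\DQ}\circ\sigma-\sigma\circ d_{\LL})-d(\tilde\tau)=0$, so $\tau$ is a morphism of DG $\rR$-modules; by Theorem~\ref{thm:main1} the $L_\infty[1]$ $\rR$-algebra is linearizable. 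For the ``only if'' direction, suppose $\tau:\LL\to\coDer_{\grR}(S_{\grR}(\LL))$ is a DG $\rR$-module splitting of~\eqref{splitseqDG}. Then $\sigma-\tau$ satisfies $\operatorname{ev}_1\circ(\sigma-\tau)=\id_{\LL}-\id_{\LL}=0$, hence $\sigma-\tau\in\Hom_{\grR}(\LL,F^1\coDer_{\grR}(S_{\grR}(\LL)))$, and $d(\sigma-\tau)=(\liederivative{\DQ}\circ\sigma-\sigma\circ d_{\LL})-(\liederivative{\DQ}\circ\tau-\tau\circ d_{\LL})=\liederivative{\DQ}\circ\sigma-\sigma\circ d_{\LL}$ since $\tau$ is a cochain map; thus the class vanishes.

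I do not expect a serious obstacle here — the corollary is essentially a repackaging of Theorem~\ref{thm:main1} into the language of an obstruction class, and both implications are short diagram chases. The only point that requires a modicum of care is checking that the complex $(\Hom_{\grR}(\LL,F^1\coDer_{\grR}(S_{\grR}(\LL))),d)$ is well defined, i.e.\ that $d^2=0$ and that $d$ genuinely lands back in this Hom-complex (which uses that $F^1\coDer_{\grR}(S_{\grR}(\LL))$ is a DG $\rR$-submodule, as established earlier), together with the verification that the cocycle condition and the cohomological independence of the representative match up with the choices involved in $\sigma$ and $\tau$ — but all of this is routine once the key translation above is in place.
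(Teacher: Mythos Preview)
Your proposal is correct and follows essentially the same route as the paper: the paper also observes that $\sigma$ is a graded $\grR$-module splitting, that $\liederivative{\DQ}\circ\sigma-\sigma\circ d_{\LL}$ lands in $F^1\coDer_{\grR}(S_{\grR}(\LL))$ and is a $1$-cocycle, and then runs exactly your two implications (if $d(\tilde\tau)$ equals the cocycle then $\sigma-\tilde\tau$ is a DG splitting, and conversely $\sigma-\tau$ witnesses exactness). One small slip: for a degree~$0$ map $\tilde\tau$ the differential is $d(\tilde\tau)=\liederivative{\DQ}\circ\tilde\tau-\tilde\tau\circ d_{\LL}$, not with a plus sign, though your subsequent computation is unaffected.
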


\subsection{Change of basis} A morphism of \DGCAs\
$\morDGCA:\rS:=(\cS,d_{\cS})\to\rR:= (\grR,d_{\grR})$ induces a pair of
adjoint functors between the categories of $L_{\infty}[1]$
algebras over $\rS$ and $\rR$, lifting the standard induced
adjunction between the categories of DG $\rS$-modules and DG $\rR$-modules.

Recall that the latter has right adjoint functor
\[ \big(\operatorname{DG}\,\rR-\operatorname{mod}\big)
\to\big(\operatorname{DG}\,\rS-\operatorname{mod}\big) ,\]
called the restriction of scalars,
sending a DG $\rR$-module
to a DG $\rS$-module:
\[ (\LL, d_{\LL})\to(\LL, d_{\LL}) ,\]
but with the induced $\rS$-module structure on $\LL$ defined by
\[ sx:=\morDGCA(s)x\quad \quad \quad \forall s\in \cS, \ x\in \LL ,\]
and sending a morphism of DG $\rR$-modules $g:(\LL, d_{\LL})\to(\MM, d_{\MM})$
to the same $g$, but seen as a morphism of DG $\rS$-modules.
If $(\LL, d_{\LL},q_2,q_3,\ldots)$ is an $L_{\infty}[1]$ $\rR$-algebra
structure on $(\LL, d_{\LL})$, then the higher coefficients $q_n$, $n\geq 2$,
are $\grR$-multilinear, hence also $\cS$-multilinear,
thus they also define an $L_{\infty}[1]$ $\rS$-algebra structure on $(\LL, d_{\LL})$.
More precisely, the coefficients $q^{\cS}_n:\nst{\LL}{\grS}{n}\to \LL$
of this $L_{\infty}[1]$ $\rS$-algebra structure are the composition
of the coefficients $q_n:\nst{\LL}{\grR}{n}\to \LL$
and the natural projection $\nst{\LL}{\grS}{n}\twoheadrightarrow \nst{\LL}{\grR}{n}$.
Similarly, composition with these natural projections send the coefficients
of an $L_{\infty}[1]$ $\rR$-morphism $G:\LL\to \MM$ of $L_{\infty}[1]$ $\rR$-algebras
into the coefficients of an $L_{\infty}[1]$ $\rS$-morphism
between the corresponding $L_{\infty}[1]$ $\rS$-algebras.
This defines the right adjoint, the restriction of scalar functor
\[ \LRA\to\LSA .\]

In the other direction, at the level of the underlying DG modules we have the left adjoint functor
\[ \big(\operatorname{DG}\,\rS-\operatorname{mod}\big)
\to \big(\operatorname{DG}\, \rR-\operatorname{mod}\big) ,\]
called the extension of scalars, sending a DG $\rS$-module
to a DG $\rR$-module:
\[ (\LL, d_{\LL}) \to (\grR\otimes_{\cS} \LL,d_{\grR}\otimes\id+\id\otimes d_{\LL}) ,\]
with the obvious $\grR$-module structure given by
\[ r(r'\otimes x)=(rr')\otimes x \]
for all $r,r'\in \grR$ and $x\in \LL$,
and sending a morphism of DG $\rS$-modules
\[ f:(\LL, d_{\LL})\to (\MM, d_{\MM}) \]
to the morphism
\[ \id\otimes f: \grR\otimes_{\cS} \LL\to
\grR\otimes_{\cS} \MM \]
between the corresponding DG $\rR$-modules.
If $(\LL, d_{\LL},q_2,q_3,\ldots)$ is an $L_{\infty}[1]$ $\rS$-algebra structure
on $(\LL, d_{\LL})$, then it defines an $L_{\infty}[1]$ $\rR$-algebra structure
on $(\grR\otimes_{\cS} \LL,d_{\grR}\otimes\id+\id\otimes d_{\LL})$
with higher coefficients $q^{\grR}_{n}$ defined by
\begin{equation}\label{eq:qnR}
q^{\grR}_{n}(r_1\otimes x_1,\ldots,r_n\otimes x_n)
:=(-1)^{\sum_i|r_i| + \sum_{i<j}|x_i||r_j|} r_1\cdots r_n\otimes q_n(x_1,\ldots,x_n)
.\end{equation}
Similarly, if $f_n:\nst{\LL}{\grS}{n}\to \MM$ are the coefficients
of an $L_{\infty}[1]$ $\rS$-morphism between $L_{\infty}[1]$ $\rS$-algebras
$\LL$ and $\MM$, then
\begin{equation}\label{eq:fnR}
f^{\grR}_{n}(r_1\otimes x_1,\ldots,r_n\otimes x_n)
:=(-1)^{\sum_{i<j}|x_i||r_j|} r_1\cdots r_n\otimes f_n(x_1,\ldots,x_n)
\end{equation}
are the coefficients of an $L_{\infty}[1]$ $\rR$-morphism
between the corresponding $L_{\infty}[1]$ $\rR$-algebras.
This defines the left adjoint, the extension of scalars functor \[ \LSA\to\LRA .\]

If $(\LL, d_{\LL},q_2,q_3,\ldots)$ is an $L_{\infty}[1]$ $\rR$-algebra,
it is easy to check that the map
\[ \eta_{\LL}:\grR\otimes_{\cS} \LL\to \LL:r\otimes x\to rx \]
is a strict $L_{\infty}[1]$ $\rR$-morphism of $L_{\infty}[1]$ $\rR$-algebras,
providing the counit of the adjunction. Likewise, if $(\LL, d_{\LL},q_2,q_3,\ldots)$
is an $L_{\infty}[1]$ $\rS$-algebra, it is easy to check that the map
\[ \epsilon_{\LL}:\LL\to \grR\otimes_{\cS} \LL:x\to 1_{\grR}\otimes x \]
is a strict $L_{\infty}[1]$ $\rS$-morphism of $L_{\infty}[1]$ $\rS$-algebras,
providing the unit of the adjunction.

In summary, we proved the following proposition. Note that the second assertion
follows from the fact that the restriction of scalars does not change
the underlying cochain complexes.

\begin{proposition}
Let $\rS:=(\cS,d_{\cS})\to \rR:=(\grR,d_{\grR})$ be a morphism of \DGCAs.
\begin{enumerate}
\item
There exists an adjunction between the categories $\LRA$ and $\LSA$.
In particular, the right adjoint functor $\LRA\to\LSA$ is the restriction of scalars
and the left adjoint functor $\LSA\to \LRA$ is the extension of scalars.
\item
The restriction of scalars functor \[ \LRA\to\LSA \] preserves quasi-isomorphisms.
\end{enumerate}
\end{proposition}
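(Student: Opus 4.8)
Almost everything needed is already assembled in the discussion preceding the statement: we have explicit formulas for the restriction and extension functors on objects and on morphisms, the explicit candidates \eqref{eq:qnR}, \eqref{eq:fnR} for their effect, and the proposed unit $\epsilon_{\LL}$ and counit $\eta_{\LL}$. The plan is to verify, in order: (i) that these two assignments are well defined functors $\LRA\to\LSA$ and $\LSA\to\LRA$; (ii) that $\epsilon$ and $\eta$ are natural transformations exhibiting the adjunction, and that the two triangle identities hold; and (iii) the assertion about quasi-isomorphisms, which will be immediate.

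For (i), the restriction of scalars functor is the easy direction. If $(\LL,d_{\LL},q_2,q_3,\dots)$ is an $L_{\infty}[1]$ $\rR$-algebra, each $q_n$ is $\grR$-multilinear, hence factors through the canonical projection $\nst{\LL}{\grS}{n}\twoheadrightarrow\nst{\LL}{\grR}{n}$; since the defining sums in \eqref{eq:Loo} are universal expressions in the $q_i$ that are left unchanged by restricting the scalar action along $\morDGCA$, the resulting maps $q_n^{\cS}$ again satisfy \eqref{eq:Loo}. The same observation applied to \eqref{eq:LooMor} sends an $L_{\infty}[1]$ $\rR$-morphism to an $L_{\infty}[1]$ $\rS$-morphism, and functoriality is clear because the projections are compatible with composition. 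For extension of scalars it is cleanest to argue through the coalgebra picture of Proposition~\ref{prop:CatEmb}. The key point is a natural isomorphism of coaugmented counital graded $\grR$-coalgebras $\grR\otimes_{\cS}S_{\cS}(\LL)\cong S_{\grR}(\grR\otimes_{\cS}\LL)$, reflecting that $\grR\otimes_{\cS}(-)$ is a strong symmetric monoidal functor commuting with the relevant countable coproducts and symmetrizations. Under this isomorphism the degree-one square-zero operator $d_{\grR}\otimes\id+\id\otimes(\widetilde{d_{\LL}}+\coderQ)$ on $\grR\otimes_{\cS}S_{\cS}(\LL)$ --- whose well-definedness is the usual Leibniz-rule computation, as in the analogous verification for $\widetilde{d_{\LL}}$ earlier in this section --- becomes a DG $\rR$-coalgebra differential on $S_{\grR}(\grR\otimes_{\cS}\LL)$ annihilating $1_{\grR}$, so by Corollary~\ref{cor:CatEmb} it is $\Phi$ of a bona fide $L_{\infty}[1]$ $\rR$-algebra structure on $(\grR\otimes_{\cS}\LL,d_{\grR}\otimes\id+\id\otimes d_{\LL})$; expanding the transported coderivation bracket by bracket recovers exactly the coefficients $q_n^{\grR}$ of \eqref{eq:qnR}, the sign being the Koszul sign from commuting the $r_i$ past the $x_j$. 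Running the same recipe on $\id\otimes F$ for a morphism $F$ of DG $\rS$-coalgebras reproduces the coefficients $f_n^{\grR}$ of \eqref{eq:fnR}, and functoriality descends from that of $\grR\otimes_{\cS}(-)$.

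For (ii), one checks directly from \eqref{eq:qnR}, respectively from $\grR$-multilinearity of the brackets, that $\epsilon_{\LL}\colon x\mapsto 1_{\grR}\otimes x$ is a \emph{strict} $L_{\infty}[1]$ $\rS$-morphism and $\eta_{\LL}\colon r\otimes x\mapsto rx$ a \emph{strict} $L_{\infty}[1]$ $\rR$-morphism --- all their higher coefficients vanish --- and that both are natural in $\LL$. Since $\epsilon_{\LL}$ and $\eta_{\LL}$ are precisely the unit and counit of the classical extension $\dashv$ restriction adjunction on DG modules, and since they are strict, the two triangle identities follow from the corresponding identities on underlying DG modules: strictness prevents any higher $L_{\infty}[1]$-component from entering, so nothing beyond the DG-module statement is required. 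This establishes the adjunction of assertion (1), with restriction of scalars as the right adjoint and extension of scalars as the left adjoint. Assertion (2) is then immediate: restriction of scalars leaves the underlying cochain complex $(\LL,d_{\LL})$ and the linear component $f_1$ of a morphism untouched, and an $L_{\infty}[1]$ morphism is by definition a quasi-isomorphism precisely when $f_1$ is a quasi-isomorphism of the underlying complexes. The only genuinely fiddly point in the whole argument is matching the signs in \eqref{eq:qnR}--\eqref{eq:fnR} with those produced by transporting coderivations and morphisms across the monoidal isomorphism; this is bookkeeping rather than a conceptual obstacle, and I expect it to be the main thing that must be handled with care.
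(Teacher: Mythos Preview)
Your proof is correct and covers all the required points; the paper itself offers essentially no proof beyond the sentence ``In summary, we proved the following proposition'' together with the preceding discussion, which amounts to writing down the formulas \eqref{eq:qnR}--\eqref{eq:fnR} and the strict morphisms $\epsilon_{\LL},\eta_{\LL}$ and asserting that the verifications are easy. Your treatment of restriction of scalars, of the unit/counit, and of assertion~(2) matches the paper's exactly.

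The one genuine methodological difference is in how you establish that extension of scalars lands in $\LRA$: the paper simply declares that the $q_n^{\grR}$ in \eqref{eq:qnR} satisfy the $L_{\infty}[1]$ relations (implicitly by direct substitution into \eqref{eq:Loo}), whereas you route through the equivalence of Proposition~\ref{prop:CatEmb} and the monoidal isomorphism $\grR\otimes_{\cS}S_{\cS}(\LL)\cong S_{\grR}(\grR\otimes_{\cS}\LL)$, transporting the square-zero coderivation across it. Your approach is more conceptual and explains \emph{why} the formulas \eqref{eq:qnR}--\eqref{eq:fnR} must work, at the cost of having to verify that monoidal isomorphism and the compatibility of the transported differential with the coalgebra structure; the paper's approach is more hands-on but leaves the actual check to the reader. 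Either is perfectly adequate here, and your remark that the sign-matching is the only delicate point is accurate.
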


\begin{remark}
Assume that $(\LL, d_{\LL},q_2,q_3,\ldots)$ is an $L_{\infty}[1]$ $\rS$-algebra,
and $\rS:=(\cS,d_{\cS})\to \rR:=(\grR,d_{\grR})$ is a morphism of \DGCAs.
According to Proposition~\ref{pro:Gare-Nord}, the map $q_2:\LL\odot_{\cS}\LL\to\LL$
induces a well-defined $\grR$-bilinear map
\[ H^i\big( \grR\otimes_{\cS} \LL, d_{\grR}\otimes\id+\id\otimes d_{\LL} \big)\otimes
H^j\big( \grR\otimes_{\cS} \LL, d_{\grR}\otimes\id+\id\otimes d_{\LL} \big)
\to H^{i+j+1} \big( \grR\otimes_{\cS} \LL, d_{\grR}\otimes\id+\id\otimes d_{\LL} \big) ,\]
making it into a graded Lie$[1]$-$\grR$ algebra.
Compare Theorem~\cite[Theorem~2.3(a)]{MR1671737}.
\end{remark}

\begin{remark}\label{rem:RSFqi}
While the restriction of scalars functor $\LRA\to\LSA$ preserves quasi-isomorphisms,
the extension of scalars functor $\LSA\to\LRA$ does not preserve quasi-isomorphisms
in general. \\
For instance, consider a morphism of (ordinary) algebras $f:\cS\to\grR$ and abelian
$L_{\infty}[1]$ $\cS$-algebras, or equivalently DG $\cS$-modules, $\LL$ and $\MM$.
Note that $g:\LL\to\MM$ is a quasi-isomorphism if and only if its mapping cone $C(g)$
is an exact sequence. Since $\grR\tensor_{\cS}\argument$ is not an exact functor,
$\grR\tensor_{\cS}C(g)$ is not necessarily an exact sequence, meaning that
$\id\tensor g:\grR\tensor_{\cS}L\to\grR\tensor_{\cS}M$ is not necessarily
a quasi-isomorphism. One can explicitly check this for the quotient map
$f:\KK[x]\to\KK[x]/(x^{2})$ and the DG $\KK[x]$-modules $\LL=(\KK[x]\xto{x^{2}}\KK[x])$
and $\MM=(0\to\KK[x]/(x^{2}))$ with the quasi-isomorphism $g:\LL\to\MM$
induced by the quotient.
\end{remark}

Recall that any \DGCA{} $(\grR,d_{\grR})$ with unit is naturally equipped
with a morphism $\morDGCA: \KK\to \grR$ defined by $1\mapsto 1_{\grR}$.
When $(\grR,d_{\grR})=(\KK,0)$, it is common to use the terminology
$L_{\infty}[1]$ algebra rather than $L_{\infty}[1]$ $\KK$-algebra.
However, in order to emphasize the coefficient of $L_{\infty}[1]$ algebras
and to avoid any confusion in the later parts, we will use the terminology
$L_{\infty}[1]$ $\KK$-algebras throughout the paper.

\begin{corollary}\label{cor:forget}
Let $\rR=(\grR,d_{\grR})$ be a \DGCA.
\begin{enumerate}
\item Any $L_{\infty}[1]$ $\rR$-algebra is
naturally an $L_{\infty}[1]$ $\KK$-algebra.
\item Any $L_{\infty}[1]$ $\rR$-morphism
between $L_{\infty}[1]$ $\rR$-algebras $\LL$ and $\MM$
is automatically an $L_{\infty}[1]$ $\KK$-morphism
with both $\LL$ and $\MM$ being considered as
$L_{\infty}[1]$ $\KK$-algebras.
\item Conversely, an $L_{\infty}[1]$ $\KK$-algebra
$(\LL, d_{\LL},q_2,q_3,\ldots)$ is an $L_{\infty}[1]$ $\rR$-algebra
if the underlying cochian complex $(\LL, d_{\LL})$ is a DG $\rR$-module
and the higher brackets $q_n$ are $\grR$-multilinear.
\item Given $\rR$-algebras $\LL$ and $\MM$, an $L_{\infty}[1]$ $\KK$-morphism
\[F=(f_1,f_2,\ldots):(\LL, d_{\LL},q_2,q_3,\ldots)\to(\MM, d_{\MM},r_2,r_3,\ldots)\]
between $\LL$ and $\MM$, being considered as $L_{\infty}[1]$ $\KK$-algebras,
is an $L_{\infty}[1]$ $\rR$-morphism if $f_1: \LL\to \MM$ is a morphism
of DG $\rR$-module, and $f_n: \nst{\LL}{\grR}{n} \to \MM$ is $\grR$-mulilinear,
$\forall n\geq 2$.
\end{enumerate}
\end{corollary}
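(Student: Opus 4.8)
The plan is to observe that every assertion is a direct consequence of the compatibility of the definitions of $L_{\infty}[1]$ $\rR$-algebras and $L_{\infty}[1]$ $\rR$-morphisms with the canonical unit morphism $\morDGCA:\KK\to\grR$. I would set this up by invoking the change-of-basis discussion of the previous subsection applied to $\rS=(\KK,0)$ and $f=\morDGCA$: the restriction of scalars functor $\LRA\to\LSA=\LoneKK$ is exactly what sends an $L_{\infty}[1]$ $\rR$-algebra to its underlying $L_{\infty}[1]$ $\KK$-algebra. So assertions (1) and (2) are simply the statement that this functor exists, which was proved in the previous subsection; there is essentially nothing further to check.

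For assertion (3), I would argue directly from the defining identities. An $L_{\infty}[1]$ $\KK$-algebra structure $(\LL,d_{\LL},q_2,q_3,\ldots)$ consists of graded symmetric $\KK$-multilinear maps $q_n$ of degree $1$ satisfying the identities~\eqref{eq:Loo}. If in addition $(\LL,d_{\LL})$ is a DG $\rR$-module and each $q_n$ ($n\ge2$) is graded $\grR$-multilinear, then by definition the $q_n$ factor through the symmetric powers $\nst{\LL}{\grR}{n}$, and the identities~\eqref{eq:Loo} --- being $\KK$-linear identities among multilinear maps --- descend verbatim to identities among maps on $\nst{\LL}{\grR}{n}$. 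Hence $(\LL,d_{\LL},q_2,q_3,\ldots)$ is an $L_{\infty}[1]$ $\rR$-algebra. For assertion (4), the reasoning is identical: the morphism identities~\eqref{eq:LooMor} are $\KK$-linear identities in the $f_n$ and the brackets; if $f_1$ is a morphism of DG $\rR$-modules and the higher $f_n$ are graded $\grR$-multilinear, they factor through $\nst{\LL}{\grR}{n}$, and~\eqref{eq:LooMor} descends, so $F$ is an $L_{\infty}[1]$ $\rR$-morphism.

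There is no real obstacle here; the only point requiring a moment of care is the passage from $\KK$-multilinear maps on $\nst{\LL}{\KK}{n}$ to $\grR$-multilinear maps on $\nst{\LL}{\grR}{n}$, i.e.\ checking that "$\grR$-multilinear" is an unambiguous hypothesis --- a $\grR$-multilinear map on the ordinary ($\KK$-)symmetric power is the same thing as a map on the $\grR$-symmetric power $\nst{\LL}{\grR}{n}$, since the latter is a quotient of the former and $\grR$-multilinearity is precisely the condition that the map factors through this quotient. Once this identification is made explicit, all four statements follow immediately, and I would present the proof as a short paragraph invoking the change-of-basis functors for (1)--(2) and this factorization observation for (3)--(4).
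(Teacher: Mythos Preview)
Your proposal is correct and matches the paper's approach: the paper states this corollary without proof, immediately after the change-of-basis discussion and the remark that any unital \DGCA{} $\rR$ comes with the canonical morphism $\KK\to\grR$, so assertions (1)--(2) are exactly the existence of the restriction-of-scalars functor for $\rS=\KK$, and (3)--(4) are immediate from the definitions via the factorization you describe. Your write-up is a faithful unpacking of what the paper leaves implicit.
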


\begin{proposition}\label{prop:q.i.}
The restriction of scalars and the extension of scalars are compatible with homotopy transfer.
\end{proposition}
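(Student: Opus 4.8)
The plan is to unwind what \emph{compatibility with homotopy transfer} asserts for each of the two functors, and then to reduce the verification, in both cases, to bookkeeping on the explicit recursive formulas of Theorem~\ref{thm:transfer}.

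\emph{Restriction of scalars.} Let $(\LL;d_{\LL},q_2,q_3,\dots)$ be an $L_{\infty}[1]$ $\rR$-algebra and $(f,g,h)$ an $\grR$-linear contraction of $(\LL,d_{\LL})$ onto $(\MM,d_{\MM})$. Since every $\grR$-linear map is $\cS$-linear via $\morDGCA$, the triple $(f,g,h)$ is also an $\cS$-linear contraction of the underlying DG $\rS$-modules, so it makes sense to compare the $L_{\infty}[1]$ $\rS$-algebra on $\MM$ obtained by ``transfer over $\rR$, then restrict scalars'' with the one obtained by ``restrict scalars, then transfer over $\rS$'' (and likewise for the quasi-isomorphisms $F,G$). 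I would simply observe that these coincide on the nose: the recursive formulas of Theorem~\ref{thm:transfer} for $f_n$, $g_n$, $r_n$ involve only the brackets $q_k$, the maps $g_1=g$ and $h$, and compositions thereof, none of which is altered by restriction of scalars. The one point to record is that, by construction, the $\cS$-multilinear bracket $q_n^{\cS}$ is $q_n$ precomposed with the canonical surjection $\nst{\LL}{\grS}{n}\twoheadrightarrow\nst{\LL}{\grR}{n}$, and that these surjections assemble into a morphism of the relevant symmetric (co)algebras, so the recursion is compatible with them and the transferred data agree.

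\emph{Extension of scalars.} Now let $(\LL;d_{\LL},q_2,q_3,\dots)$ be an $L_{\infty}[1]$ $\rS$-algebra and $(f,g,h)$ an $\cS$-linear contraction of $(\LL,d_{\LL})$ onto $(\MM,d_{\MM})$. The first step is to check that $\grR\otimes_{\cS}(-)$ carries this to an $\grR$-linear contraction $(\id\otimes f,\id\otimes g,\id\otimes h)$ of $(\grR\otimes_{\cS}\LL,\,d_{\grR}\otimes\id+\id\otimes d_{\LL})$ onto $(\grR\otimes_{\cS}\MM,\,d_{\grR}\otimes\id+\id\otimes d_{\MM})$: all defining identities and side conditions follow by tensoring the corresponding identities on $\LL$ with $\id_{\grR}$, the only subtlety being the Koszul sign incurred when the degree $-1$ map $h$ moves past a scalar --- but that sign is exactly the one built into the $\grR$-module structure of $\grR\otimes_{\cS}\LL$, so $d_{\grR}\otimes\id$ and $\id\otimes h$ graded-commute and $[\,d_{\grR}\otimes\id+\id\otimes d_{\LL},\,\id\otimes h\,]=\id\otimes(d_{\LL}h+hd_{\LL})$ produces the required homotopy. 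It then remains to show that the $L_{\infty}[1]$ $\rR$-algebra on $\grR\otimes_{\cS}\MM$ obtained by ``transfer over $\rS$, then extend scalars'' --- that is, the brackets $(r_n)^{\grR}$ of \eqref{eq:qnR} and the morphisms $(f_n)^{\grR},(g_n)^{\grR}$ of \eqref{eq:fnR} --- coincides with the one obtained by ``extend scalars, then transfer over $\rR$'' along the contraction just constructed. For this I would use the canonical isomorphism of DG $\rR$-coalgebras $S_{\grR}(\grR\otimes_{\cS}\LL)\cong\grR\otimes_{\cS}S_{\cS}(\LL)$, under which $\widetilde{d_{\grR\otimes_{\cS}\LL}}$ corresponds to $d_{\grR}\otimes\id+\id\otimes\widetilde{d_{\LL}}$, the coderivation of the extended $L_{\infty}[1]$ structure corresponds to $\id\otimes\coderQ$, and --- the crucial point --- the symmetrized tensor-trick homotopy built on the left from $(\id\otimes f,\id\otimes g,\id\otimes h)$ corresponds to $\id\otimes H$ on the right (up to the sign for a degree $-1$ operator). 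Granting this identification, every term of the recursions of Theorem~\ref{thm:transfer} on $\grR\otimes_{\cS}\MM$ becomes $\id\otimes(-)$ applied to the corresponding term on $\MM$, so by induction on $n$ the transferred brackets and morphisms are precisely $(r_n)^{\grR}$, $(f_n)^{\grR}$, $(g_n)^{\grR}$; the statement for $F,G$ is part of the same computation.

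The routine-but-delicate step, and the one I expect to be the main obstacle, is precisely the identification of the tensor-trick homotopy on $S_{\grR}(\grR\otimes_{\cS}\LL)$ with $\id\otimes H$ under $S_{\grR}(\grR\otimes_{\cS}\LL)\cong\grR\otimes_{\cS}S_{\cS}(\LL)$: the definition of $H$ involves a sum over positions and over $S_n$ with Koszul signs depending on the degrees of the arguments, and one has to verify that the scalars can be collected out front uniformly in each summand --- including in the initial string of $fg$-factors --- picking up exactly the sign $(-1)^{\sum_i|r_i|+\sum_{i<j}|x_i||r_j|}$ of \eqref{eq:qnR}. Once the signs are matched for $H$, the compatibility of $\widetilde{d_{\LL}}$ and $\coderQ$ with $\grR\otimes_{\cS}(-)$ is straightforward, and the rest of the argument is formal.
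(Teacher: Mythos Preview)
Your proposal is correct and follows essentially the same approach as the paper: for restriction of scalars you observe, as the paper does, that the recursive formulas of Theorem~\ref{thm:transfer} are literally unchanged; for extension of scalars you verify that the extended contraction $(f^{\grR},g^{\grR},h^{\grR})$ feeds into the same recursion and produces exactly $(r_n)^{\grR}$, $(f_n)^{\grR}$, $(g_n)^{\grR}$, which is what the paper summarizes as ``a direct computation''. Your organization of that computation via the coalgebra isomorphism $S_{\grR}(\grR\otimes_{\cS}\LL)\cong\grR\otimes_{\cS}S_{\cS}(\LL)$ and the identification of the symmetrized homotopy with $\id\otimes H$ is a clean way to carry it out, and your flagging of the sign-matching for $H$ as the delicate point is accurate.
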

\begin{proof}
Let $\morDGCA: \rS:=(\cS,d_{\cS})\to \rR:=(\grR,d_{\grR})$ be a morphism of \DGCAs.
We first consider the restriction of scalars.
Since any graded $\grR$-linear maps are graded $\cS$-linear via $\morDGCA$,
Theorem~\ref{thm:transfer} with $\rR$-coefficients naturally holds
with $\rS$-coefficients as well. In both cases, the recursive formulas
for $f_{n}$, $g_{n}$ and $r_{n}$ for $n\geq 2$ are identical. Thus,
\begin{enumerate}
\item homotopy transfer as $L_{\infty}[1]$ $\rR$-algebras
and then apply restriction of scalars via $\morDGCA$; and
\item first apply restriction of scalars via $\morDGCA$
and then homotopy transfer as $L_{\infty}[1]$ $\rS$-algebra
\end{enumerate}
are the same.

Next we consider the extension of scalars.
Under the hypothesis of Theorem~\ref{thm:transfer} with $\cS$-coefficients,
one obtains $\cS$-linear maps $f_{n}$, $g_{n}$ and $r_{n}$ for $n\geq 2$.
Using the notation from the previous subsection, the extension of scalars
via the morphism $\morDGCA$ induces $\grR$-linear maps, $f_{n}^{\grR}$,
$g_{n}^{\grR}$ as in Equation~\eqref{eq:fnR}, and $q_{n}^{\grR}$, $r_{n}^{\grR}$
as in Equation~\eqref{eq:qnR}, for $n\geq 2$.
Comparing them with the formulas in Theorem~\ref{thm:transfer},
a direct computation verifies that $f_{n}^{\grR}$, $g_{n}^{\grR}$
and $r_{n}^{\grR}$ agrees with the homotopy transfer of $L_{\infty}[1]$ $\rR$-algebra
$(\grR\otimes_{\cS}\LL;d_{\grR}\otimes\id+\id\otimes d_{\LL},
q_{2}^{\grR},\ldots,q_{n}^{\grR},\ldots)$
via the $\grR$-linear contraction $(f^{\grR}, g^{\grR}, h^{\grR})$
obtained by extending the $\cS$-linear contraction $(f,g,h)$ via $\morDGCA$.
Thus, the extension of scalars is compatible with homotopy transfer.
\end{proof}

The following proposition provides a criterion for linearizability
with respect to the restriction of scalars.
\begin{proposition}
\label{prop:S-split}
Let $\morDGCA: \rS:=(\cS,d_{\cS})\to \rR:=(\grR,d_{\grR})$ be a morphism of \DGCAs,
and let $(\LL;d_{\LL},q_2,q_3,\ldots)$ be an $L_{\infty}[1]$ $\rR$-algebra.
Assume that the exact sequence \eqref{splitseqDG} splits in the category of DG $\rS$-modules.
Then $(\LL;d_{\LL},q_2,q_3,\ldots)$ is linearizable as an $L_{\infty}[1]$ $\rS$-algebra.
In particular, $(\LL;d_{\LL},q_{2},q_{3},\ldots)$
is homotopy abelian as $L_{\infty}[1]$ $\rS$-algebras.
\end{proposition}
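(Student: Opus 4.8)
The plan is to reduce the claim about $L_{\infty}[1]$ $\rS$-algebras to the linearization criterion established in Theorem~\ref{thm:main1}, applied over the base $\rS$ rather than over $\rR$. The key observation is that linearizability, as characterized in Theorem~\ref{thm:main1}, depends only on the underlying DG module structure and the splitting of a short exact sequence, and that the sequence \eqref{splitseqDG} is, up to the change of scalars, the same sequence whether read over $\rR$ or over $\rS$.

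First I would recall, from Corollary~\ref{cor:forget}(1), that restriction of scalars along $\morDGCA$ turns the $L_{\infty}[1]$ $\rR$-algebra $(\LL;d_{\LL},q_2,q_3,\ldots)$ into an $L_{\infty}[1]$ $\rS$-algebra whose underlying DG module is $(\LL,d_{\LL})$ with the $\morDGCA$-induced $\cS$-action, and whose higher brackets $q_n^{\cS}$ are the composites of the $q_n$ with the natural projections $\nst{\LL}{\grS}{n}\twoheadrightarrow\nst{\LL}{\grR}{n}$. Next I would apply Theorem~\ref{thm:main1} \emph{over the base $\rS$} to this $L_{\infty}[1]$ $\rS$-algebra: it is linearizable as an $L_{\infty}[1]$ $\rS$-algebra if and only if the short exact sequence
\[
0\to\big(F^1\coDer_{\cS}(S_{\cS}(\LL)),\liederivative{\DQ}\big)
\to\big(\coDer_{\cS}(S_{\cS}(\LL)),\liederivative{\DQ}\big)
\xrightarrow{\operatorname{ev}_1}(\LL,d_{\LL})\to 0
\]
splits in the category of DG $\rS$-modules. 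The remaining task is therefore to deduce the splitting of this $\rS$-version of \eqref{splitseqDG} from the hypothesis that the $\rR$-version splits over DG $\rS$-modules.

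The mechanism for that deduction is the following: any $\grR$-coderivation of $S_{\grR}(\LL)$ restricts, via $\morDGCA$, to an $\grS$-coderivation of $S_{\cS}(\LL)$ — concretely, there is a map $\coDer_{\grR}(S_{\grR}(\LL))\to\coDer_{\cS}(S_{\cS}(\LL))$ sending a coderivation with Taylor coefficients $(q_0,q_1,q_2,\ldots)$ to the coderivation whose coefficients are the composites of the $q_i$ with the projections $\nst{\LL}{\grS}{i}\twoheadrightarrow\nst{\LL}{\grR}{i}$, and this map is a morphism of DG $\rS$-modules intertwining $\liederivative{\DQ}$ on both sides, carrying $F^1$ into $F^1$, and commuting with $\operatorname{ev}_1$. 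Hence a DG $\rS$-module splitting $\tau:\LL\to\coDer_{\grR}(S_{\grR}(\LL))$ of the $\rR$-version of \eqref{splitseqDG} composes with this map to give a DG $\rS$-module splitting of the $\rS$-version. By Theorem~\ref{thm:main1} over $\rS$, $(\LL;d_{\LL},q_2,q_3,\ldots)$ is then linearizable as an $L_{\infty}[1]$ $\rS$-algebra, and since an $\rS$-linearizable $L_{\infty}[1]$ $\rS$-algebra is $\rS$-quasi-isomorphic (indeed $\rS$-isomorphic) to the abelian one, it is a fortiori $\rS$-homotopy abelian, which is the final assertion.

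The main obstacle I anticipate is verifying cleanly that the map $\coDer_{\grR}(S_{\grR}(\LL))\to\coDer_{\cS}(S_{\cS}(\LL))$ is well-defined, $\morDGCA$-semilinear (i.e.\ $\grR$-linear, hence $\cS$-linear), and genuinely compatible with the differentials $\liederivative{\DQ}$ — in particular that it sends $\widetilde{d_{\LL}}+\coderQ$ on the $\rR$-side to $\widetilde{d_{\LL}}+\coderQ^{\cS}$ on the $\rS$-side, where $\coderQ^{\cS}$ is the coderivation encoding the restricted brackets $q_n^{\cS}$. This is essentially a bookkeeping exercise using the explicit reconstruction formula \eqref{eq:coder} together with the fact that the projections $\nst{\LL}{\grS}{n}\twoheadrightarrow\nst{\LL}{\grR}{n}$ assemble into a morphism of graded coalgebras $S_{\cS}(\LL)\to S_{\grR}(\LL)$ covering $\morDGCA$; but it does require some care with signs and with the distinction between the $\grR$-linear and merely $\grR$-semilinear parts of the relevant maps.
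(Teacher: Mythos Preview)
Your proposal is correct and follows essentially the same route as the paper: both arguments construct the map $\coDer_{\grR}(S_{\grR}(\LL))\to\coDer_{\cS}(S_{\cS}(\LL))$ by precomposing Taylor coefficients with the projections $\nst{\LL}{\grS}{n}\twoheadrightarrow\nst{\LL}{\grR}{n}$, verify it is a morphism of DG $\rS$-modules intertwining $\liederivative{\DQ}$ with $\liederivative{\DQ^{\cS}}$, and then compose the hypothesized $\rS$-splitting of \eqref{splitseqDG} with this map to obtain a splitting of the $\rS$-version, concluding via Theorem~\ref{thm:main1}. The paper phrases the map as an inclusion of graded $\cS$-modules (and notes it is in fact a monomorphism of DG Lie $\rS$-algebras), but the content is identical to what you describe.
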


\begin{proof}
The natural projections $\nst{\LL}{\grS}{n}\twoheadrightarrow \nst{\LL}{\grR}{n}$
induce inclusions of graded $\cS$-modules
\[ \Hom_{\grR}(\nst{\LL}{\grR}{n},\LL)\hookrightarrow\Hom_{\cS}(\nst{\LL}{\grS}{n},\LL) \]
for all $n\geq 0$, which assemble to an inclusion of graded $\grS$-modules
\[ \coDer_{\grR}(S_{\grR}(\LL))\cong\prod_{n\geq 0}\Hom_{\grR}(\nst{\LL}{\grR}{n},\LL)
\hookrightarrow\prod_{n\geq 0}\Hom_{\cS}(\nst{\LL}{\grS}{n},\LL)
\cong\coDer_{\cS}(S_{\cS}(L)):Q\to Q^{\#} .\]
It is easy to check that
\[ (\coDer_{\grR}(S_{\grR}(\LL)),\liederivative{Q},[\argument,\argument])
\hookrightarrow (\coDer_{\cS}(S_{\cS}(\LL)),\liederivative{Q^\#},[\argument,\argument]) \]
is a monomorphism of DG Lie $\rS$-algebras, and in particular a cochain map.
It is straightforward to show that the composition of this monomorphism
with a splitting of~\eqref{splitseqDG} in the category of DG $\rS$-modules yields a splitting of
\begin{equation*}
0 \to(F^1\coDer_{\cS}(S_{\cS}(\LL)),\liederivative{Q^\#})
\to(\coDer_{\cS}(S_{\cS}(\LL)),\liederivative{Q^\#})
\xrightarrow{\operatorname{ev}_1}(\LL,d_{\LL})\to 0
\end{equation*}
in the category of DG $\rS$-modules, and so the desired conclusion
follows from Theorem~\ref{thm:main1}.
\end{proof}

\section{Kapranov \texorpdfstring{$L_{\infty}[1]$}{L-infinity[1]}-algebras
associated with DG Lie algebroids}

\subsection{DG Lie algebroids}

In this section, we briefly go through the basic notions on DG Lie algebroids.

Let $\cM$ be a graded manifold. A graded Lie algebroid over $\cM$
is a graded vector bundle $\cL\to \cM$ equipped with a Lie bracket
$\liecL{\argument}{\argument}$ on $\sections{\cL}$ and a bundle map
$\rho:\cL\to \tangent{\cM}$ of degree $0$, called anchor map, satisfying the Leibniz rule
\[ \liecL{l_{1}}{f\cdot l_{2}}=(\rho(l_{1})f)\cdot l_{2}+(-1)^{\degree{f}
\cdot \degree{l_{1}}}f\cdot \liecL{l_{1}}{l_{2}} \]
for $f\in\smooth{\cM}$, $l_{1},l_{2}\in\sections{\cL}$.

Let $(\cM,Q)$ be a DG manifold. A DG Lie algebroid over $(\cM,Q)$
is a graded Lie algebroid $\cL\to \cM$ equipped with a differential
$\QL:\sections{\cL}^{\bullet}\to \sections{\cL}^{\bullet +1}$
such that the following compatibility conditions are satisfied:
\begin{enumerate}
\item $\QL(f\cdot l)=Q(f)\cdot l + (-1)^{\degree{f}}f\cdot \QL(l)$
\item $\QL(\liecL{l_{1}}{l_{2}})=\liecL{\QL(l_{1})}{l_{2}}
+(-1)^{\degree{l_{1}}}\liecL{l_{1}}{\QL(l_{2})}$
\item $\liederivative{Q}(\rho(l))=\rho(\QL(l))$
\end{enumerate}
for any homogeneous $f\in\smooth{\cM}$ and $l,l_{1},l_{2}\in\sections{\cL}$.
Here, $\liederivative{Q}:\XX(\cM)\to \XX(\cM)$ denotes the Lie derivative along $Q$.

\begin{remark}
In~\cite{MR2534186}, DG Lie algebroids (a.k.a.\ $Q$-algebroids) are defined
as Lie algebroid objects in the category of DG manifolds. In fact, it is proved
in the same paper that such definition is equivalent to the definition above
and moreover, the item (3) in the compatibility condition is redundant.
\end{remark}

\begin{example}
\begin{enumerate}
\item
Any differential graded Lie algebra (DGLA) $(\frakg,d, [\argument,\argument])$
forms a DG Lie algebroid $(\cL,\QL)=(\frakg,d)$ over a point $(\cM,Q)=(\{\text{pt}\},0)$.
Conversely, any DG Lie algebroid $(\cL,\QL)$ over a point is a DGLA.
\item
Let $(\cM,Q)$ be a DG manifold. Then the tangent bundle $\tangent{\cM}$
equipped with Lie derivative $\liederivative{Q}$ along $Q$ forms a DG Lie algebroid
over $(\cM,Q)$, called the tangent DG Lie algebroid of the DG manifold $(\cM,Q)$.
\end{enumerate}
\end{example}

Given a graded Lie algebroid $\cL\to\cM$ and a graded vector bundle $\cE$ over $\cM$,
an $\cL$-connection on $\cE$ is a $\KK$-bilinear map
$\nabla:\sections{\cL}\times\sections{\cE}\to \sections{\cE}$
of degree $0$ satisfying the conditions:
\begin{enumerate}
\item $\nabla_{f\cdot l}e = f\cdot \nabla_{l}e$ ;
\item $\nabla_{l}(f\cdot e) = \rho(l)f \cdot e
+ (-1)^{\degree{l}\cdot \degree{f}} f\cdot \nabla_{l}e$ ,
\end{enumerate}
for any homogeneous
$f\in\smooth{\cM}$, $l\in\sections{\cL}$ and $e\in\sections{\cE}$.

When $\cE=\cL$, the torsion of an $\cL$-connection $\nabla$ on $\cL$
is $T^{\nabla}\in\sections{\Lambda^{2}\cL^{\vee}\tensor\cL}$ defined by
\[ T^{\nabla}(l_{1},l_{2})=\nabla_{l_{1}}l_{2}-(-1)^{\degree{l_{1}}\cdot
\degree{l_{2}}} \nabla_{l_{2}}l_{1} - [l_{1},l_{2}]_{\cL}\]
for any homogeneous $l_{1},l_{2}\in\sections{\cL}$.
We say $\nabla$ is torsion-free if $T^{\nabla}=0$.
Torsion-free connections always exist \cite{MR3910470}:
if $\tilde{\nabla}$ is any connection, then
$\nabla=\tilde{\nabla}-\half T^{\tilde{\nabla}}$ is a torsion-free connection.

The curvature of an $\cL$-connection $\nabla$ on $\cL$ is the degree~$0$ element
$\curvature{\nabla} \in \sections{\Lambda^{2}\cL^{\vee}\tensor \End(\cL)}^{0}$ defined by
\[ \curvature{\nabla}(l_{1},l_{2})l_{3}=\nabla_{l_{1}}\nabla_{l_{2}}l_{3}-(-1)^{\degree{l_{1}}
\cdot\degree{l_{2}}}\nabla_{l_{2}}\nabla_{l_{1}}l_{3}-\nabla_{\liecL{l_{1}}{l_{2}}}l_{3} \]
for any homogeneous $l_{1},l_{2},l_{3}\in\sections{\cL}$.

Given a DG Lie algebroid $(\cL,\QL)$ over $(\cM,Q)$,
consider the graded vector bundle $S^2\cL^{\vee}\tensor\cL$ over $\cM$.
There is an induced differential on $\sections{ S^2\cL^{\vee}\tensor\cL}$,
again denoted by $\QL$, by abuse of notation.
Thus, one obtains a cochain complex
$(\sections{ S^2\cL^{\vee}\tensor \cL }^{\bullet}, \QL)$
whose cohomology group is denoted by $H^{\bullet}(\sections{S^2\cL^{\vee}\tensor \cL}, \QL)$.

Given any torsion-free $\cL$-connection $\nabla$ on $\cL$,
consider a $\smooth{\cM}$-bilinear map of degree $1$
\[ \atiyahcocycleQL:\sections{\cL}\times\sections{\cL}\to\sections{\cL} \]
defined by
\[ \atiyahcocycleQL(l_{1},l_{2})=\QL(\nabla_{l_{1}}l_{2})
-\nabla_{\QL(l_{1})}l_{2} -(-1)^{\degree{l_{1}}} \nabla_{l_{1}}\QL(l_{2}) \]
for $l_{1},l_{2}\in\sections{\cL}$.
It is simple to see that $\atiyahcocycleQL$ is indeed a tensor.
Moreover, it is symmetric:
\[\atiyahcocycleQL(l_{1},l_{2})=(-1)^{\degree{l_{1}}\cdot\degree{l_{2}}}
\atiyahcocycleQL(l_{2},l_{1})\]
for any homogeneous $l_{1},l_{2}\in\sections{\cL}$.
Hence it defines a section of the vector bundle $S^{2}\cL^{\vee}\tensor\cL$,
i.e.\ $\atiyahcocycleQL\in\sections{S^{2}\cL^{\vee}\tensor\cL}$.

Moreover, $\atiyahcocycleQL$ is a $1$-cocycle of the cochain
$(\sections{S^2\cL^{\vee}\tensor\cL }^{\bullet},\QL)$,
called the Atiyah $1$-cocycle of $(\cL,\QL)$ associated with $\nabla$.
Its cohomology class
\[ \atiyahclassQL=[\atiyahcocycleQL]\in H^{1}( \sections{ S^2\cL^{\vee}\tensor \cL }, \QL) \]
is called the Atiyah class of the DG Lie algebroid $(\cL,\QL)$.
It is independent of the choice of $\nabla$ and is indeed the obstruction class
to the existence of an $\cL$-connection on $\cL$ compatible with $\QL$.
Indeed, $\atiyahclassQL=0$ if and only if there exists a torsion-free $\cL$-connection
$\nabla$ on $\cL$ such that $\atiyahcocycleQL=0$.

\begin{example}\label{example:AtDGLA}
\begin{enumerate}
\item
If $(\cL,\QL)$ is a DGLA, the Atiyah class of $(\cL,\QL)$ vanishes.
Indeed, since $\smooth{\cM}\cong \KK$, any torsion-free
$\cL$-connection $\nabla$ on $\cL$ is $\smooth{\cM}$-bilinear,
that is, $\nabla\in \sections{S^2\cL^{\vee}\tensor \cL }$, and thus the
Atiyah cocycle $\At^{\nabla}=\QL(\nabla)$ is in the coboundary.
\item
When $(\cL,\QL)=(\tangent{\cM},\liederivative{Q})$ is the tangent bundle
of a DG manifold $(\cM,Q)$, the Atiyah class of $(\cL,\QL)$ is the Atiyah class
of the DG manifold $(\cM,Q)$.
\end{enumerate}
\end{example}

\subsection{PBW for DG Lie algebroids}

Let $(\cM,Q)$ be a DG manifold, and let $(\cL,\QL)$ be
a DG Lie algebroid over $(\cM,Q)$.
There are two DG $\rR$-coalgebras arising from $(\cL,\QL)$, where $\rR=(\cR,d_{\cR})$ stands for
the \DGCA{} $(\smooth{\cM},Q)$,
i.e.\ $\cR=\smooth{\cM}$ and $d_{\cR}=Q$.

The first one is the DG $\rR$-coalgebra $(\ScL,\QS)$
corresponding to the one in Corollary~\ref{cor:CatEmb} with the zero MC element.
More precisely, under the following natural identification
\[ S_\cR \big(\sections{\cL}\big) \cong \ScL ,\]
the comultiplication is defined by Equation~\eqref{eq:Delta}
and $\QS:\ScL\to \ScL$ is defined by $\QS(f)=Q(f)$ for $f\in \cR$ and
\[ \QS(l_{1}\odot\cdots\odot l_{n})=\sum_{\sigma\in \shuffle{1}{n-1}} \sign(\sigma)
\cdot \QL(l_{\sigma(1)})\odot l_{\sigma(2)}\odot \cdots \odot l_{\sigma(n)} \]
for $l_{1},\ldots,l_{n}\in\sections{\cL}$,
where $\sign(\sigma)$ denotes the Koszul sign.

The other one is a DG $\rR$-coalgebra structure on $\UcL$,
the universal enveloping algebra of the graded Lie algebroid $\cL$.
Recall that $\UcL$ is the reduced $\KK$-tensor algebra
of $\big(\cR\oplus \sections{\cL}\big)$ quotient by $\mathcal{I}$
\[ \UcL=\big(\bigoplus_{n=1}^{\infty}(\cR\oplus\sections{\cL})^{\tensor n}\big) / \mathcal{I} \]
where $\mathcal{I}$ is the two sided ideal generated by the following 4 types of elements
\begin{enumerate}
\item $f\tensor g - f\cdot g$ ;
\item $f\tensor l - f\cdot l$ ;
\item $l_{1}\tensor l_{2} -(-1)^{\degree{l_{1}}\cdot
\degree{l_{2}}} l_{2}\tensor l_{1} - \liecL{l_{1}}{l_{2}}$ ;
\item $l\tensor f - (-1)^{\degree{f}\cdot \degree{l}} f\tensor l - \rho(l)f$ ,
\end{enumerate}
for homogeneous $f,g\in\cR$ and $l,l_{1},l_{2}\in\sections{\cL}$.
The universal enveloping algebra $\UcL$ is a graded left $\cR$-module.

The $\cR$-linear comultiplication $\Delta:\UcL \to \UcL \tensor_{\cR} \UcL$
is defined by $\Delta(1)=1\tensor 1$ and
\begin{equation}\label{eq:ComultUcL}
\Delta(u_{1}\cdot u_{2})=\Delta(u_{1})\cdot\Delta(u_{2})
\end{equation}
for any $u_{1},u_{2}\in\UcL$.
More explicitly, Equation~\eqref{eq:ComultUcL} means that $\Delta$
is the deconcatenation characterised by
\begin{multline*}
\Delta(l_{1}l_{2}\cdots l_{n})= 1\tensor (l_{1}l_{2}\cdots l_{n})
+ (l_{1}l_{2}\cdots l_{n}) \tensor 1 + \\
+ \sum_{k=0}^{n} \sum_{\sigma\in \shuffle{k}{n-k}} \sign(\sigma)
\cdot (l_{\sigma(1)}\cdots l_{\sigma(k)}) \tensor (l_{\sigma(k+1)}\cdots l_{\sigma(n)}).
\end{multline*}
The differential $\QU: \UcL \to \UcL$ is defined as the extension of $\QL$ by Leibniz rule:
for $f\in \cR$, $\QU(f)=Q(f)$ and for $l_{1}, \ldots, l_{n} \in \sections{\cL}$,
\[ \QU(l_{1}l_{2} \cdots l_{n})=\sum_{k=1}^{n}(-1)^{\degree{l_{1}}+\cdots
+\degree{l_{k-1}}} l_{1} l_{2} \cdots \QL(l_{k}) \cdots l_{n}\, .\]
It is straightforward to check that $(\UcL,\QU)$ forms a DG $\rR$-coalgebra.

\begin{proposition}\label{prop:SULQ}
Given a DG Lie algebroid $(\cL,\QL)$ over a DG manifold $(\cM,Q)$,
both $(\ScL, \QS)$ and $(\UcL, \QU)$ form DG $\rR$-coalgebras.
\end{proposition}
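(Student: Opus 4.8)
The plan is to treat the two coalgebras separately, and to observe that $(\ScL,\QS)$ is already subsumed by the general theory of Section~2, while for $(\UcL,\QU)$ only one point requires genuine work. For $(\ScL,\QS)$: compatibility condition~(1) in the definition of a DG Lie algebroid says exactly that $\QL(f\cdot l)=Q(f)\cdot l+(-1)^{\degree{f}}f\cdot\QL(l)$, i.e.\ that $(\sections{\cL},\QL)$ is a DG $\rR$-module over $\rR=(\smooth{\cM},Q)$, and $\QL^{2}=0$ makes it an abelian $L_{\infty}[1]$ $\rR$-algebra $(\sections{\cL},\QL,0,0,\ldots)$. Under the identification $S_{\cR}(\sections{\cL})\cong\ScL$ the operator $\QS$ is precisely the operator $\widetilde{\QL}$ of Equation~\eqref{eq:dL} attached to this DG $\rR$-module, so $(\ScL,\QS)$ is the image of this abelian $L_{\infty}[1]$ $\rR$-algebra under the embedding $\Phi$ into DG $\rR$-coalgebras --- equivalently the DG $\rR$-coalgebra of Corollary~\ref{cor:CatEmb} associated with the zero Maurer--Cartan element --- and is therefore a DG $\rR$-coalgebra with no further argument needed.

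For $(\UcL,\QU)$, I would first establish that $\QU$ is well defined. Introduce the graded derivation $\widehat{Q}$ of the reduced tensor algebra $\bigoplus_{n\ge1}(\cR\oplus\sections{\cL})^{\tensor n}$ determined by $\widehat{Q}|_{\cR}=Q$, $\widehat{Q}|_{\sections{\cL}}=\QL$, and the graded Leibniz rule; this is unambiguous on the free algebra. Since $\mathcal{I}$ is a two-sided ideal and $\widehat{Q}$ a derivation, it suffices to check that $\widehat{Q}$ sends each of the four families of generators of $\mathcal{I}$ into $\mathcal{I}$, for then $\widehat{Q}(\mathcal{I})\subseteq\mathcal{I}$ and $\widehat{Q}$ descends to the quotient $\UcL$, yielding $\QU$. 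Generators of types~(1) and~(2) are dispatched using that $Q$ is a derivation of $\smooth{\cM}$ together with compatibility condition~(1); generators of type~(3) use that $\QL$ is a derivation of the bracket $\liecL{\argument}{\argument}$ (condition~(2)); and generators of type~(4) use condition~(3), $\liederivative{Q}\rho(l)=\rho(\QL l)$, rewritten on functions as $\rho(\QL l)(f)=Q(\rho(l)(f))-(-1)^{\degree{l}}\rho(l)(Q(f))$. I expect this to be the only real obstacle of the proof: a routine but sign-sensitive bookkeeping computation in which each compatibility axiom of a DG Lie algebroid enters essentially once.

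Granting that $\QU$ is well defined, the remaining properties should follow formally. The comultiplication $\Delta$ is the classical coproduct on the universal enveloping algebra of a graded Lie algebroid; its well-definedness (compatibility with $\mathcal{I}$), coassociativity and counitality are standard and checked on generators. One has $\QU^{2}=0$ because $\QU^{2}$ is again a derivation of $\UcL$ (the square of an odd derivation) which vanishes on the generators $\cR$ and $\sections{\cL}$, as $Q^{2}=0$ and $\QL^{2}=0$. Next, $\QU$ is a coderivation, $\Delta\circ\QU=(\QU\tensor\id+\id\tensor\QU)\circ\Delta$: both sides are maps $\UcL\to\UcL\tensor_{\cR}\UcL$ obeying the same Leibniz-type rule relative to the product of $\UcL$ (because $\Delta$ is multiplicative and $\QU$ is a derivation), so they agree once they agree on the generators $\cR$ and $\sections{\cL}$, which is immediate from the definitions. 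Finally, the DG $\rR$-module identity $\QU(f\cdot u)=Q(f)\cdot u+(-1)^{\degree{f}}f\cdot\QU(u)$ follows directly from the defining Leibniz property of $\QU$ together with the type~(2) relation in $\mathcal{I}$. Assembling these facts shows that $(\UcL,\QU)$ is a DG $\rR$-coalgebra, completing the proof.
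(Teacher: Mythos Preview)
Your proposal is correct and matches the paper's approach: the paper explicitly identifies $(\ScL,\QS)$ as the DG $\rR$-coalgebra of Corollary~\ref{cor:CatEmb} with the zero Maurer--Cartan element, exactly as you do, and for $(\UcL,\QU)$ it simply asserts that ``it is straightforward to check'' without supplying details. Your argument---descending a derivation of the tensor algebra through the ideal $\mathcal{I}$ using the three compatibility axioms, then verifying $\QU^{2}=0$, the coderivation identity, and the Leibniz rule on generators---is precisely the routine verification the paper leaves to the reader.
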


In fact, $\ScL$ and $\UcL$ are isomorphic as graded $\cR$-coalgebras
(but not as DG $\rR$-coalgebras) via the Poincaré--Birkhoff--Witt (PBW) map.
Let $\nabla$ be an $\cL$-connection on $\cL$.
The PBW map associated with $\nabla$ is the $\cR$-linear map
\begin{equation}\label{eq:PBW}
\pbw^{\nabla}:\ScL \to \UcL
\end{equation}
defined by the following recursive formulas:
\begin{gather*}
\pbw^{\nabla}(f)=f, \quad \forall f\in \cR; \\
\pbw^{\nabla}(l)=l, \quad \forall l\in \sections{\cL}; \\
\pbw^{\nabla}(\mathfrak l)=\frac{1}{n+1}\sum_{k=1}^{n+1} \sign
\cdot \big( l_{k}\cdot \pbw^{\nabla}(\mathfrak{l}^{\{k\}})
- \pbw^{\nabla}(\nabla_{l_{k}} \mathfrak{l}^{\{k\}})\big)
,\end{gather*}
for all $\mathfrak{l} = l_{1}\odot\cdots\odot l_{n+1} \in \sections{S^{n+1}\cL}$.
Here, $\mathfrak{l}^{\{k\}}=l_{1}\odot \cdots \hat{l}_{k}\cdots
\odot l_{n+1}\in \sections{S^{n}\cL}$ denotes the symmetric tensor of $l_{1},\ldots, l_{n+1}$
except $l_{k}$ and the symbol $\sign=\sign(l_{1},\cdots,l_{n+1})$ denotes the Koszul sign.

An argument similar to that of~\cite[Theorem~4.3]{MR3910470}
(see also \cite{StienonVitaglianoXu}) shows
that the PBW map is well-defined and, indeed, an isomorphism of
graded $\cR$-coalgebras.

\begin{proposition}
\label{thm:GrPBW}
Let $\cL$ be a graded Lie algebroid over a graded manifold $\cM$.
Upon a choice of an $\cL$-connection $\nabla$ on $\cL$,
the map $\pbw^{\nabla}: \ScL \to \UcL$
is an isomorphism of graded $\cR$-coalgebras.
\end{proposition}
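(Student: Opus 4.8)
The plan is to adapt the proof of \cite[Theorem~4.3]{MR3910470} (see also \cite{StienonVitaglianoXu}) to the $\ZZ$-graded, $\cR$-linear setting; the only genuinely new ingredient is the bookkeeping of Koszul signs. There are three things to verify: that the recursive formula defining $\pbw^{\nabla}$ descends to a well-defined $\cR$-linear map $\ScL\to\UcL$; that this map is a morphism of graded $\cR$-coalgebras; and that it is bijective. An isomorphism of graded $\cR$-coalgebras then follows, since the set-theoretic inverse of a bijective coalgebra morphism is automatically a coalgebra morphism.

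\emph{Well-definedness.} I would proceed by induction on the symmetric degree. The recursion assigns to $(l_1,\ldots,l_{n+1})\in\sections{\cL}^{\times(n+1)}$ the element $\tfrac{1}{n+1}\sum_{k}\sign\cdot\bigl(l_k\cdot\pbw^{\nabla}(\mathfrak{l}^{\{k\}})-\pbw^{\nabla}(\nabla_{l_k}\mathfrak{l}^{\{k\}})\bigr)$, and one must check that this expression is graded symmetric and $\cR$-multilinear in the $l_i$, so that it factors through $\sections{S^{n+1}\cL}\cong S_{\cR}(\sections{\cL})$. The $\cR$-multilinearity is exactly what the correction term $\pbw^{\nabla}(\nabla_{l_k}\mathfrak{l}^{\{k\}})$ is designed for: it cancels the failure of $l_k\cdot\pbw^{\nabla}(\mathfrak{l}^{\{k\}})$ to be $\cR$-linear in $l_k$, using the Leibniz rule $\nabla_{l}(f\cdot e)=\rho(l)f\cdot e+(-1)^{|l||f|}f\cdot\nabla_l e$ together with the defining relation $l\otimes f-(-1)^{|f||l|}f\otimes l=\rho(l)f$ of $\UcL$. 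For graded symmetry it suffices to treat transpositions of two adjacent arguments $l_i\leftrightarrow l_{i+1}$, since these generate $S_{n+1}$; the failure of $\nabla$ to be graded symmetric in the two slots is governed by $\liecL{l_i}{l_{i+1}}$ and the torsion $T^{\nabla}$, and, combined with the relation $l_i\otimes l_{i+1}-(-1)^{|l_i||l_{i+1}|}l_{i+1}\otimes l_i=\liecL{l_i}{l_{i+1}}$ in $\UcL$ and the induction hypothesis, this produces the required sign-twisted symmetry — the graded analogue of the corresponding step in \cite{MR3910470}, valid for an arbitrary connection.

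\emph{Bijectivity.} Filter $\UcL$ by the $\cR$-submodules $\UcL^{\leq k}$ generated by products of at most $k$ sections of $\cL$, and filter $\ScL$ by symmetric degree. The recursion shows that $\pbw^{\nabla}$ respects these filtrations and that $\pbw^{\nabla}(l_1\odot\cdots\odot l_n)\equiv l_1\cdots l_n\pmod{\UcL^{\leq n-1}}$. By the (graded) Poincar\'e--Birkhoff--Witt theorem for graded Lie algebroids, the canonical surjection $\sections{S^n\cL}\to\UcL^{\leq n}/\UcL^{\leq n-1}$, $l_1\odot\cdots\odot l_n\mapsto[l_1\cdots l_n]$, is an isomorphism; hence $\pbw^{\nabla}$ induces the identity map on the associated graded modules. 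Since both filtrations are exhaustive and bounded below, it follows that $\pbw^{\nabla}$ is an isomorphism of graded $\cR$-modules.

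\emph{Coalgebra morphism.} Finally one verifies the identity $\Delta_{\UcL}\circ\pbw^{\nabla}=(\pbw^{\nabla}\otimes_{\cR}\pbw^{\nabla})\circ\Delta_{\ScL}$; compatibility with the counits and coaugmentations is immediate, since $\pbw^{\nabla}$ is $\cR$-linear, restricts to the identity on $\cR$, and maps the augmentation ideal $\overline{S}_{\cR}(\sections{\cL})$ into the augmentation ideal of $\UcL$. I would prove the identity for the coproducts by induction on symmetric degree, substituting the recursion for $\pbw^{\nabla}$ and comparing the shuffle coproduct \eqref{eq:Delta} on $\ScL$ with the deconcatenation coproduct \eqref{eq:ComultUcL} on $\UcL$; the inductive step is formal once the signs are accounted for. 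I expect the well-definedness step to be the main obstacle, as it is the only point where the anchor $\rho$, the bracket $\liecL{\argument}{\argument}$, the torsion of $\nabla$ and the Koszul signs all interact; once it is established, bijectivity reduces to the classical PBW theorem and the coalgebra compatibility to a routine sign-decorated induction.
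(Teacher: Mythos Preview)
Your proposal is correct and follows exactly the approach the paper indicates: the paper does not give a self-contained proof but simply states that ``an argument similar to that of~\cite[Theorem~4.3]{MR3910470} (see also \cite{StienonVitaglianoXu}) shows that the PBW map is well-defined and, indeed, an isomorphism of graded $\cR$-coalgebras,'' and your outline is precisely that adaptation, with the expected Koszul-sign bookkeeping.
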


\subsection{Kapranov \texorpdfstring{$L_{\infty}[1]$}{L-infinity[1]}
\texorpdfstring{$\rR$}{R}-algebras}

We are now ready to state one of the main results in this paper.

\begin{theorem}
\label{prop:QSRN0}
Given a DG Lie algebroid $(\cL,\QL)$ over a DG manifold $(\cM,Q)$,
there is an induced $L_{\infty}[1]$ $\rR$-algebra structure on $(\sections{\cL},\QL)$,
where $\rR$ is the \DGCA{} $(\smooth{\cM},Q)$, and whose
$2$-ary bracket is a cocycle representative of the Atiyah class.
\end{theorem}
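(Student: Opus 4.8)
The plan is to transfer the DG $\rR$-coalgebra structure $(\UcL,\QU)$ of the universal enveloping algebra across the PBW isomorphism of Proposition~\ref{thm:GrPBW} to obtain a DG $\rR$-coalgebra structure on $\ScL$, and then invoke Corollary~\ref{cor:CatEmb} to extract an $L_{\infty}[1]$ $\rR$-algebra on $(\sections{\cL},\QL)$. Concretely, fix a torsion-free $\cL$-connection $\nabla$ on $\cL$ --- such connections exist, as recalled in the text. The PBW map $\pbw^{\nabla}:\ScL\to\UcL$ is an isomorphism of graded $\cR$-coalgebras; hence the conjugated operator
\[
\DQ:=(\pbw^{\nabla})^{-1}\circ\QU\circ\pbw^{\nabla}:\ScL\to\ScL
\]
is a degree $1$ coderivation of the graded $\cR$-coalgebra $\ScL$ squaring to zero (since $\QU^2=0$ and $\pbw^{\nabla}$ is an isomorphism), so $(\ScL,\DQ)$ is a DG $\rR$-coalgebra. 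Since $\pbw^{\nabla}(f)=f$ for $f\in\cR$ and $\QU(f)=Q(f)$, we get $\DQ(1_{\grR})=0$. Corollary~\ref{cor:CatEmb} then produces a DG $\rR$-module $(\sections{\cL},d)$ with $d=p\,\DQ\,i$ and an MC element $\coderQ\in F^2\coDer_{\grR}(\ScL)$ with $\DQ=\widetilde{d}+\coderQ$, and Lemma~\ref{lem:MC} (or Proposition~\ref{prop:CatEmb}) upgrades this to the desired $L_{\infty}[1]$ $\rR$-algebra structure $(\sections{\cL};q_1,q_2,q_3,\ldots)$.

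\textbf{Identifying $q_1$ and the remaining verifications.} The next step is to compute the linear and binary components of $\DQ$ explicitly from the recursive formula for $\pbw^{\nabla}$. On degree-$1$ elements $l\in\sections{\cL}$ one has $\pbw^{\nabla}(l)=l$, and an easy computation with the recursion shows $\DQ(l)=\QL(l)\in\sections{\cL}$, so that $q_1=\QL$ and the DG $\rR$-module in Corollary~\ref{cor:CatEmb} is precisely $(\sections{\cL},\QL)$ as claimed. For the quadratic part, apply $\DQ$ to $l_1\odot l_2\in\sections{S^2\cL}$: using $\pbw^{\nabla}(l_1\odot l_2)=\tfrac12(l_1 l_2+l_2 l_1-\nabla_{l_1}l_2-(-1)^{|l_1||l_2|}\nabla_{l_2}l_1)$ together with the third defining relation of $\UcL$ (which rewrites $l_1 l_2$ in terms of the symmetrized product and $\liecL{l_1}{l_2}$) and the Leibniz formula for $\QU$, one extracts the $\sections{\cL}$-component of $\DQ(l_1\odot l_2)$. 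A direct but bookkeeping-heavy computation identifies $q_2(l_1,l_2)$ --- modulo an $\QL$-exact term --- with $\atiyahcocycleQL(l_1,l_2)=\QL(\nabla_{l_1}l_2)-\nabla_{\QL(l_1)}l_2-(-1)^{|l_1|}\nabla_{l_1}\QL(l_2)$, the Atiyah cocycle associated with $\nabla$. Since $q_2$ is automatically a $1$-cocycle in $(\nst{\sections{\cL}^{\vee}}{\cR}{2}\otimes_\cR\sections{\cL},\QL)$ by the discussion following the definition of $L_{\infty}[1]$ $\rR$-algebras, and since its class agrees with $\atiyahclassQL$, this gives the final assertion.

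\textbf{Main obstacle.} The conceptual content --- transporting a coalgebra structure along an isomorphism, then applying Corollary~\ref{cor:CatEmb} --- is formal; the real work is the explicit identification of $q_2$ with the Atiyah cocycle. The difficulty is twofold: first, one must carefully track Koszul signs through the recursive PBW formula and the commutation relations in $\UcL$; second, one must show that the symmetrization inherent in passing from the (a priori only graded cocommutative up to lower-order corrections) product on $\UcL$ to the symmetric algebra $\ScL$ does not contribute extra terms to the $\sections{\cL}$-component beyond $\atiyahcocycleQL$ and an $\QL$-coboundary. I expect the cleanest route is to first treat the case of a \emph{torsion-free} connection, where the antisymmetric part of $\nabla$ is constrained by $T^\nabla=0$, so that the Lie bracket terms coming from relation (3) cancel against the connection terms, leaving exactly the Atiyah expression; the independence of $\atiyahclassQL$ from $\nabla$ (already recorded in the text) then shows the cohomology class is well defined regardless of the chosen connection.
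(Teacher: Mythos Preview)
Your approach is essentially identical to the paper's: conjugate $\QU$ through the PBW isomorphism for a torsion-free connection, then read off the $L_\infty[1]$ $\rR$-algebra via Proposition~\ref{prop:CatEmb}/Corollary~\ref{cor:CatEmb}. One small correction worth noting: the quadratic computation is cleaner than you anticipate. Since $\nabla$ is torsion-free, one has simply $\pbw^{\nabla}(l_1\odot l_2)=l_1\cdot l_2-\nabla_{l_1}l_2$ in $\UcL$, and inverting this on $\QU(l_1\cdot l_2-\nabla_{l_1}l_2)$ yields $q_2=-\atiyahcocycleQL$ \emph{on the nose}, with no $\QL$-exact correction; the ``extra symmetrization terms'' you worry about in your obstacle paragraph cancel exactly against the Lie bracket terms via $T^\nabla=0$, so no separate coboundary analysis is needed.
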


When the DG Lie algebroid is the tangent DG Lie algebroid
$T\cM\to \cM$ of a DG manifold $(\cM,Q)$, we recover the following:

\begin{corollary}[{\cite[Theorem~4.4]{MR4393962}}]
\label{thm:Genova}
Given a DG manifold $(\cM,Q)$, there is an induced $L_{\infty}[1]$ $\rR$-algebra
structure on $(\XX (\cM), \liederivative{Q})$, where $\rR$ is the \DGCA{}
$(\smooth{\cM},Q)$, and whose $2$-ary bracket is a cocycle representative
of the Atiyah class of the DG manifold $(\cM,Q)$.
\end{corollary}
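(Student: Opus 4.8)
The plan is to transport the DG $\rR$-coalgebra structure of $\UcL$ onto $\ScL$ along the PBW isomorphism, and then to use the equivalence (Proposition~\ref{prop:CatEmb}, Corollary~\ref{cor:CatEmb}) between $L_{\infty}[1]$ $\rR$-algebras and DG $\rR$-coalgebras of the form $(S_{\cR}(\sections{\cL}),\DQ)$ with $\DQ(1_{\cR})=0$.

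First I would fix a \emph{torsion-free} $\cL$-connection $\nabla$ on $\cL$ (such connections exist, as recalled above) and consider the PBW isomorphism of graded $\cR$-coalgebras $\pbw^{\nabla}\colon\ScL\to\UcL$ of Proposition~\ref{thm:GrPBW}. Set
\[ \DQ:=(\pbw^{\nabla})^{-1}\circ\QU\circ\pbw^{\nabla}\colon\ScL\to\ScL ,\]
where $(\UcL,\QU)$ is the DG $\rR$-coalgebra of Proposition~\ref{prop:SULQ}. Then $(\ScL,\DQ)$ is again a DG $\rR$-coalgebra: conjugating the coderivation $\QU$ by the coalgebra isomorphism $\pbw^{\nabla}$ yields a degree-one map compatible with the comultiplication; $\DQ^{2}=(\pbw^{\nabla})^{-1}\QU^{2}\pbw^{\nabla}=0$; and the Leibniz rule over $\rR$ for $\DQ$ follows from that for $\QU$ together with the $\cR$-linearity of $\pbw^{\nabla}$, hence of $(\pbw^{\nabla})^{-1}$. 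Since $\pbw^{\nabla}(f)=f$ and $\QU(f)=Q(f)$ for $f\in\cR$, we have $\DQ|_{\cR}=Q$, so in particular $\DQ(1_{\cR})=Q(1_{\cR})=0$. By Corollary~\ref{cor:CatEmb}, the DG $\rR$-coalgebra $(\ScL,\DQ)$ corresponds to a DG $\rR$-module $(\sections{\cL},d)$ together with an $L_{\infty}[1]$ $\rR$-algebra structure. As $\QU(l)=\QL(l)\in\sections{\cL}$ for $l\in\sections{\cL}$ and $\pbw^{\nabla}$ restricts to the identity on $\cR\oplus\sections{\cL}$, the unary bracket is $d(l)=p\,\DQ(l)=p(\pbw^{\nabla})^{-1}(\QL(l))=\QL(l)$, so the underlying DG $\rR$-module is precisely $(\sections{\cL},\QL)$.

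It remains to identify the binary bracket, which by construction is the composite $q_{2}\colon\nst{\sections{\cL}}{\cR}{2}\hookrightarrow\ScL\xrightarrow{\DQ}\ScL\twoheadrightarrow\sections{\cL}$. I would compute $q_{2}(l_{1}\odot l_{2})=p(\pbw^{\nabla})^{-1}\QU\,\pbw^{\nabla}(l_{1}\odot l_{2})$ in three elementary steps: (i) the torsion-free PBW recursion in low filtration degree gives $\pbw^{\nabla}(l_{1}\odot l_{2})=l_{1}l_{2}-\nabla_{l_{1}}l_{2}$ in $\UcL$; (ii) applying $\QU$, which extends $\QL$ by the Leibniz rule, yields $\QL(l_{1})\,l_{2}+(-1)^{\degree{l_{1}}}l_{1}\,\QL(l_{2})-\QL(\nabla_{l_{1}}l_{2})$; (iii) using $(\pbw^{\nabla})^{-1}(a\cdot b)=a\odot b+\nabla_{a}b$ for homogeneous $a,b\in\sections{\cL}$ and that $(\pbw^{\nabla})^{-1}$ is the identity on $\sections{\cL}$, the projection onto $\nst{\sections{\cL}}{\cR}{1}=\sections{\cL}$ annihilates the $\odot$-terms and leaves
\[ q_{2}(l_{1}\odot l_{2})=\nabla_{\QL(l_{1})}l_{2}+(-1)^{\degree{l_{1}}}\nabla_{l_{1}}\QL(l_{2})-\QL(\nabla_{l_{1}}l_{2})=-\atiyahcocycleQL(l_{1},l_{2}) .\]
Hence $q_{2}=-\atiyahcocycleQL$ coincides, up to an inessential sign, with the Atiyah $1$-cocycle of $(\cL,\QL)$ associated with $\nabla$, and $[q_{2}]=-\atiyahclassQL$ is a cocycle representative of the Atiyah class, as claimed.

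I expect the main obstacle to be the Koszul-sign bookkeeping in step (iii) — running the PBW recursion and its inverse in filtration degrees $\le 2$ while tracking the signs incurred when commuting $\QL(l_{i})$ past the remaining factors — together with the routine but somewhat fiddly verification in the second paragraph that $\DQ$ is a genuine uncurved DG $\rR$-coalgebra differential, not merely a coderivation of the underlying graded coalgebra. Everything else is formal.
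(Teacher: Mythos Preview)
Your proof is correct and is essentially identical to the paper's proof of Theorem~\ref{prop:QSRN}, from which this corollary follows by specializing to the tangent DG Lie algebroid $(\cL,\QL)=(T\cM,\liederivative{Q})$. The paper defines the same operator $\QU^{\pbw}:=(\pbw^{\nabla})^{-1}\QU\,\pbw^{\nabla}$, verifies that $(\ScL,\QU^{\pbw})$ is a DG $\rR$-coalgebra with $\QU^{\pbw}(1_{\cR})=0$, invokes Proposition~\ref{prop:CatEmb}, and identifies $R_{2}=-\atiyahcocycleQL$ by the same low-degree PBW computation you outline.
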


More precisely, we prove the following:
\begin{theorem}
\label{prop:QSRN}
Let $(\cL,\QL)$ be a DG Lie algebroid over a DG manifold
$(\cM,Q)$, and let $\nabla$ be a torsion-free $\cL$-connection on $\cL$.
Then $\sections{\cL}$ admits
an $L_{\infty}[1]$ $\rR$-algebra structure
$(\QL, R_{2},R_{3},\ldots)$,
where $\rR$ is the \DGCA{} $(C^\infty (\cM), Q)$, such that
\begin{enumerate}
\item $R_{2}=-\atiyahcocycleQL \in \sections{S^{2}(\cL^{\vee})\tensor \cL}$;
\item the $L_{\infty}[1]$ $\rR$-algebra structure on
$\sections{\cL}$ arising from different choices of $\cL$-connections are all isomorphic.
\end{enumerate}
\end{theorem}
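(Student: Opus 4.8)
The plan is to obtain the $L_{\infty}[1]$ $\rR$-algebra structure on $\sections{\cL}$ by transporting the DG $\rR$-coalgebra structure $(\UcL,\QU)$ across the PBW isomorphism of Proposition~\ref{thm:GrPBW}, and then to identify the resulting multibrackets well enough to pin down $R_{1}$ and $R_{2}$. Concretely, fix a torsion-free $\cL$-connection $\nabla$ on $\cL$ and let $\pbw^{\nabla}\colon \ScL \to \UcL$ be the associated isomorphism of graded $\cR$-coalgebras. Then $\DQ := (\pbw^{\nabla})^{-1}\circ \QU \circ \pbw^{\nabla}$ is a degree $1$ square-zero $\cR$-coderivation of $\ScL$ with $\DQ(1_{\cR}) = 0$ (since $\QU(1) = 0$ and $\pbw^{\nabla}$ is counital and coaugmented). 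By Corollary~\ref{cor:CatEmb}, $\DQ$ decomposes uniquely as $\widetilde{d_{\LL}} + \coderQ$ with $\coderQ \in F^{2}\coDer_{\cR}(\ScL)$ an MC element; by Lemma~\ref{lem:MC} this is exactly an $L_{\infty}[1]$ $\rR$-algebra structure $(\sections{\cL}; d_{\LL}, R_{2}, R_{3},\ldots)$ on $\sections{\cL}$. The first thing to check is that $d_{\LL} = \QL$: this is the linear part of $\DQ$, i.e.\ $p\,\DQ\,i$ where $i\colon \sections{\cL}\hookrightarrow \ScL$ and $p$ is the projection, and it follows immediately from $\pbw^{\nabla}(l) = l$ for $l\in\sections{\cL}$ together with $\QU(l) = \QL(l)$.

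Next I would compute $R_{2}$. The quadratic bracket is the component $S^{2}_{\cR}(\sections{\cL}) \to \sections{\cL}$ of $\DQ$, so I need to extract the degree-one, length-drop-by-one part of $(\pbw^{\nabla})^{-1}\,\QU\,\pbw^{\nabla}$ on $\sections{S^{2}\cL}$. Using the recursive defining formula for $\pbw^{\nabla}$ one has, for $l_{1},l_{2}\in\sections{\cL}$,
\[
\pbw^{\nabla}(l_{1}\odot l_{2}) = \tfrac12\big(l_{1}l_{2} + l_{2}l_{1}\big) - \tfrac12\big(\pbw^{\nabla}(\nabla_{l_{1}}l_{2}) + \pbw^{\nabla}(\nabla_{l_{2}}l_{1})\big),
\]
and after applying $\QU$, using the Leibniz rule for $\QU$, the relation $l_{1}l_{2} - (-1)^{\degree{l_1}\degree{l_2}}l_{2}l_{1} = \liecL{l_{1}}{l_{2}}$ in $\UcL$, and torsion-freeness of $\nabla$ (which makes the symmetrization of $\nabla_{l_1}l_2$ equal to $l_{1}l_{2}+l_{2}l_{1}$ modulo the bracket), one projects back via $(\pbw^{\nabla})^{-1}$ onto $\sections{\cL}$. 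The terms that survive are precisely $\QL(\nabla_{l_{1}}l_{2}) - \nabla_{\QL(l_{1})}l_{2} - (-1)^{\degree{l_{1}}}\nabla_{l_{1}}\QL(l_{2})$ up to sign, i.e.\ $R_{2} = -\atiyahcocycleQL$, which is symmetric and $\cR$-bilinear exactly as recalled in the preamble. Since $\atiyahcocycleQL$ is a $\QL$-cocycle representing $\atiyahclassQL$, this also proves the last clause of Theorem~\ref{prop:QSRN0}.

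For part (2), suppose $\nabla$ and $\nabla'$ are two torsion-free $\cL$-connections, with PBW isomorphisms $\pbw$ and $\pbw'$. Then $\Psi := (\pbw)^{-1}\circ \pbw'\colon \ScL \to \ScL$ is an isomorphism of graded $\cR$-coalgebras which is counital and coaugmented, hence — being determined by its corestriction — is of the form $\Psi = (g_{1}, g_{2}, g_{3},\ldots)$ with $g_{1} = \id_{\sections{\cL}}$ (both PBW maps restrict to the identity on $\sections{\cL}$). Moreover $\Psi$ intertwines the two transported coderivations $\DQ = (\pbw)^{-1}\QU\pbw$ and $\DQ' = (\pbw')^{-1}\QU\pbw'$ by construction, so by the coalgebraic description of morphisms recalled after Lemma~\ref{lem:MC}, $\Psi$ is an $L_{\infty}[1]$ $\rR$-isomorphism between the two induced structures; since $g_{1} = \id$ it is in particular an isomorphism. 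This gives the stated independence up to isomorphism. The main obstacle, and the step requiring genuine care, is the $R_{2}$ computation: one must track Koszul signs through the recursive PBW formula, the coderivation comultiplication, and the inverse PBW map simultaneously, and verify that all higher-length contributions cancel so that the length-two-to-one component is exactly $-\atiyahcocycleQL$; the well-definedness and coalgebra-isomorphism property of $\pbw^{\nabla}$ itself I would cite from the cited argument of~\cite{MR3910470}.
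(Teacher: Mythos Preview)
Your proposal is correct and follows essentially the same route as the paper: transport $\QU$ through $\pbw^{\nabla}$ to get a square-zero coderivation on $\ScL$, invoke the coalgebraic characterisation of $L_{\infty}[1]$ $\rR$-algebras, read off $d_{\LL}=\QL$ and $R_{2}=-\atiyahcocycleQL$ from the low-degree components, and obtain independence of $\nabla$ from the fact that $(\pbw^{\nabla})^{-1}\circ\pbw^{\nabla'}$ is a DG $\rR$-coalgebra isomorphism. The paper states the $R_{2}$ computation as a one-line ``direct computation'', whereas you sketch it via the recursion for $\pbw^{\nabla}$; your displayed formula for $\pbw^{\nabla}(l_{1}\odot l_{2})$ is missing the Koszul sign $(-1)^{\degree{l_{1}}\degree{l_{2}}}$ on the second summands, but you flag this yourself, and once that sign is restored the torsion-free condition collapses the expression to $l_{1}l_{2}-\nabla_{l_{1}}l_{2}$, from which the claimed identity follows.
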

Such an $L_{\infty}[1]$ $\rR$-algebra is called the
Kapranov $L_{\infty}[1]$ $\rR$-algebra of the DG Lie algebroid $(\cL,\QL)$.

\begin{proof}
Consider the operator $\QU^{\pbw}:\ScL \to \ScL$ of degree $1$ defined by
\begin{equation}\label{eq:QPBW}
\QU^{\pbw}=(\pbw^{\nabla})^{-1}\circ \QU \circ \pbw^{\nabla}.
\end{equation}
We claim that the pair $(\ScL,\QU^{\pbw})$ corresponds
to the $L_{\infty}[1]$ $\rR$-algebra described in the theorem.

We first show the pair $(\ScL, \QU^{\pbw})$
is a DG $\rR$-coalgebra satisfying $\QU^{\pbw}(1_{\cR})=0$.
By the construction, the isomorphism $\pbw^{\nabla}$ of graded $\cR$-coalgebras satisfies
$\pbw^{\nabla}\circ \QU^{\pbw}=\QU\circ \pbw^{\nabla}$.
Thus, $(\ScL,\QU^{\pbw})$ is a DG $\rR$-coalgebra,
isomorphic to $(\UcL,\QU)$ via $\pbw^{\nabla}$.
Moreover, since $\QU(1_{\cR})=0$ and $\pbw^{\nabla}(1_{\cR})=1_{\cR}$,
we have $\QU^{\pbw}(1_{\cR})=0$.

According to Proposition~\ref{prop:CatEmb}, the DG $\rR$-coalgebra $(\ScL,\QU^{\pbw})$
corresponds to an $L_{\infty}[1]$ $\rR$-algebra $(\sections{\cL};d_{\cL},R_{2},R_{3},\ldots)$.
It is simple to check that $d_{\cL}=\QL$. Next, direct computation shows that,
for $l_{1},l_{2}\in \sections{\cL}$,
\[ \QU^{\pbw^{\nabla}}(l_{1}\odot l_{2})=-\atiyahcocycleQL(l_{1}\odot l_{2})
+\QL(l_{1})\odot l_{2} + (-1)^{\degree{l_{1}}}l_{1}\odot \QL(l_{2}).\]
Thus, we have $R_{2}=-\atiyahcocycleQL$.

Finally, we show that these $L_{\infty}[1]$ $\rR$-structures
constructed from different choices of connection are all isomorphic.
Assume that $(\ScL, \widetilde{\mathcal{Q}}_{U}^{\pbw})$
is obtained from a different choice of a connection $\widetilde{\nabla}$.
Since both
$\pbw^{\nabla}$ and $\pbw^{\widetilde{\nabla}}$ are isomorphisms of
DG $\rR$-coalgebras,
so is
\[ F:=(\pbw^{\nabla})^{-1}\pbw^{\widetilde{\nabla}}:
\big(\sections{S\cL}, \widetilde{\mathcal{Q}}_{U}^{\pbw}\big ) \to
\big(\ScL, \QU^{\pbw} \big) .\]
Thus, $F$ induces an isomorphism of $L_\infty[1]$ $\rR$-algebras.

This concludes the proof.
\end{proof}

\begin{remark}
We first remark that the binary brackets described in Theorem~\ref{prop:QSRN0}
and Theorem~\ref{prop:QSRN} have opposite signs.
However, they are isomorphic as $L_{\infty}[1]$ $\rR$-algebras,
where the isomorphism is given by $(-\id,0,0,0,\ldots):\sections{\cL}\to \sections{\cL}$.

We also remark that the description of the higher multi-brackets
$R_{n}$, $n\geq 3$, given in~\cite[Theorem~4.7]{MR4393962} for the case
$\cL=\tangent{\cM}$, extends verbatim to any DG Lie algebroid
$(\cL,\QL)$ via the same computation and reasoning.
In particular, the brackets $R_{n}$ in Theorem~\ref{prop:QSRN}
are entirely determined by the Atiyah $1$-cocycle $\At^{\nabla}$,
the curvature $R^{\nabla}$ and their higher covariant exterior derivative.
Moreover, when the curvature $R^{\nabla}$ vanishes, then for $n\geq 3$,
the bracket $R_{n}$ satisfy the recursive formula:
\[ R_{n}(\mathfrak{l})=\frac{1}{n} \sum_{k=1}^{n} \sign_{k}
\Big((-1)^{\degree{l_{k}}} \nabla_{l_{k}} \big(R_{n-1}(\mathfrak{l}^{\{k\}})\big)
- R_{n-1}(\nabla_{l_{k}}\mathfrak{l}^{\{k\}})\Big) \]
where $\sign_{k}=(-1)^{\degree{l_{k}}(\degree{l_{1}}+\cdots+\degree{l_{k-1}})}$
is the Koszul sign, $\mathfrak{l}=l_{1}\odot\cdots\odot l_{n}\in \sections{S^{n}\cL}$,
and $\mathfrak{l}^{\{k\}}$ denotes the symmetric tensors of all $l_{1},\ldots,l_{n}$
except $l_{k}$.
\end{remark}

Before we move on to the $\rR$-linearizability of the Kapranov
$L_{\infty}[1]$ $\rR$-algebra, we need some preparation.
For each choice of torsion-free $\cL$-connection $\nabla$ on $\cL$,
consider the following map of degree~$1$
\[C^{\nabla}:=\QU\circ \pbw^{\nabla} - \pbw^{\nabla} \circ \QS:\ScL \to \UcL.\]
It is clear that $C^{\nabla}$ is graded $\cR$-linear.
Moreover, it is characterised by the following lemma, whose proof follows
from~\cite[Proposition~3.7]{MR4393962}.

\begin{lemma}[\cite{MR4393962}]\label{lem:CNabla}
The $\cR$-linear map $C^{\nabla}$ satisfies
\begin{gather*}
C^{\nabla}(f)=0 ; \\
C^{\nabla}(l)=0 ; \\
C^{\nabla}(l_{1}\odot l_{2})= - \At^{\nabla}(l_{1}\odot l_{2}),
\end{gather*}
for $f\in \smooth{\cM}$ and $l, l_{1}, l_{2} \in \sections{\cL}$,
and for $n\geq 3$, it satisfies the recursive formula
\begin{equation}\label{eq:CNabla}
C^{\nabla}(\mathfrak{l})= \frac{1}{n} \sum_{k=1}^{n} \sign_{k}
\Big [ (-1)^{\degree{l_{k}}} l_{k} \cdot C^{\nabla} (\mathfrak{l}^{\{k\}})
- C^{\nabla}(\nabla_{l_{k}}\mathfrak{l}^{\{k\}})\Big]
-\frac{2}{n}\sum_{i<j} \sign_{i} \sign_{j} \pbw^{\nabla}
\Big(\At^{\nabla}(l_{j}\odot l_{i})\odot \mathfrak{l}^{\{i,j\}} \Big),
\end{equation}
where $\mathfrak{l}=l_{1}\odot\cdots\odot l_{n}\in \sections{S^{n}\cL}$
and $\sign_{k}=(-1)^{\degree{l_{k}}(\degree{l_{1}}+\cdots+\degree{l_{k-1}})}$.
Here, the symbol $\mathfrak{l}^{\{k\}}$ denotes the symmetric tensors
of all $l_{1},\ldots, l_{n}$ except $l_{k}$, and similarly
the symbol $\mathfrak{l}^{\{i,j\}}$ denotes the symmetric tensor
of all $l_{1},\ldots, l_{n}$ except $l_{i}$ and $l_{j}$.
\end{lemma}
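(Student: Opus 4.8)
The plan is to verify the four displayed identities by direct computation, following closely the proof of~\cite[Proposition~3.7]{MR4393962}, which establishes the analogous statement in the special case $\cL=\tangent{\cM}$ of the tangent DG Lie algebroid.

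The first two identities are essentially formal. Since $\pbw^{\nabla}(f)=f$ and $\pbw^{\nabla}(l)=l$, while $\QS$ and $\QU$ agree on $\cR$ (both equal $Q$) and on $\sections{\cL}$ (both equal $\QL$), I would simply compute $C^{\nabla}(f)=\QU(f)-\pbw^{\nabla}(Q(f))=Q(f)-Q(f)=0$ and, likewise, $C^{\nabla}(l)=\QL(l)-\QL(l)=0$.

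For the quadratic identity the first step is to simplify $\pbw^{\nabla}(l_{1}\odot l_{2})$. Unwinding the recursion defining $\pbw^{\nabla}$ on $l_{1}\odot l_{2}$ and rewriting $l_{2}l_{1}$ by the defining relations of $\UcL$, the combination $\nabla_{l_{1}}l_{2}+(-1)^{\degree{l_{1}}\degree{l_{2}}}\nabla_{l_{2}}l_{1}+\liecL{l_{1}}{l_{2}}$ reduces, by torsion-freeness of $\nabla$, to $2\nabla_{l_{1}}l_{2}$; this gives the clean formula $\pbw^{\nabla}(l_{1}\odot l_{2})=l_{1}l_{2}-\nabla_{l_{1}}l_{2}$. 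I would then apply the derivation $\QU$ to this, expand $\pbw^{\nabla}(\QS(l_{1}\odot l_{2}))=\pbw^{\nabla}\big(\QL(l_{1})\odot l_{2}+(-1)^{\degree{l_{1}}}l_{1}\odot\QL(l_{2})\big)$ with the same formula, and subtract; the product terms $\QL(l_{1})l_{2}$ and $(-1)^{\degree{l_{1}}}l_{1}\QL(l_{2})$ cancel, and what remains is exactly $-\big(\QL(\nabla_{l_{1}}l_{2})-\nabla_{\QL(l_{1})}l_{2}-(-1)^{\degree{l_{1}}}\nabla_{l_{1}}\QL(l_{2})\big)=-\At^{\nabla}(l_{1}\odot l_{2})$.

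The recursion for $n\geq 3$ is where the real work lies. I would substitute the recursion defining $\pbw^{\nabla}(\mathfrak{l})$ into $C^{\nabla}(\mathfrak{l})=\QU(\pbw^{\nabla}(\mathfrak{l}))-\pbw^{\nabla}(\QS(\mathfrak{l}))$, distribute $\QU$ over each product $l_{k}\cdot\pbw^{\nabla}(\mathfrak{l}^{\{k\}})$ by its Leibniz rule, and replace $\QU\circ\pbw^{\nabla}$ by $C^{\nabla}+\pbw^{\nabla}\circ\QS$ on the symmetric powers of shorter length. The resulting terms would then be reorganized using: the shuffle expansion of $\QS(\mathfrak{l})$ and of the $\QS(\mathfrak{l}^{\{k\}})$ in terms of $\QL$; the defining formula for $\At^{\nabla}$, which appears each time $\QL$ lands on a factor $\nabla_{l_{k}}(\cdot)$; the torsion-freeness relation $\liecL{l_{1}}{l_{2}}=\nabla_{l_{1}}l_{2}-(-1)^{\degree{l_{1}}\degree{l_{2}}}\nabla_{l_{2}}l_{1}$; and the $\cR$-linearity of $C^{\nabla}$ together with the base cases proved above. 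None of this uses any feature special to the tangent algebroid --- only the graded Lie algebroid axioms, the axioms for an $\cL$-connection, the (co)derivation properties of $\QU$ and $\QS$, and the recursion defining $\pbw^{\nabla}$ --- so the computation of~\cite[Proposition~3.7]{MR4393962} goes through verbatim. I expect the main obstacle to be purely the Koszul-sign bookkeeping: one has to check that all the cross terms coalesce precisely into the single double sum $-\frac{2}{n}\sum_{i<j}\sign_{i}\sign_{j}\,\pbw^{\nabla}\big(\At^{\nabla}(l_{j}\odot l_{i})\odot\mathfrak{l}^{\{i,j\}}\big)$ appearing in~\eqref{eq:CNabla}, with no leftover curvature contribution.
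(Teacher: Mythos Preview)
Your proposal is correct and follows exactly the approach indicated in the paper, which simply refers to~\cite[Proposition~3.7]{MR4393962} and notes that the argument there (for $\cL=\tangent{\cM}$) carries over verbatim to an arbitrary DG Lie algebroid. Your outline of the low-degree computations and of why the recursive step uses nothing special to the tangent case is accurate.
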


\begin{theorem}\label{thm:DGPBW}
Assume that $(\cL, \QL)$ is a DG Lie algebroid over $(\cM, Q)$.
The following statements are equivalent.
\begin{enumerate}
\item The Kapranov $L_{\infty}[1]$ $\rR$-algebra on $(\Gamma(\cL),\QL)$
is linearizable. \label{item:Linear}
\item The Atiyah class of the DG Lie algebroid $(\cL, \QL )$ vanishes. \label{item:At0}
\item There exists an $\cL$-connection $\nabla$ on $\cL$
whose associated PBW map $\pbw^{\nabla}:\ScL\to \UcL$ intertwines
with the differentials $\QS$ and $\QU$. \label{item:AtPBW}
\end{enumerate}
\end{theorem}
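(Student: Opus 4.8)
The strategy is to prove the cycle of implications $(1)\Rightarrow(2)\Rightarrow(3)\Rightarrow(1)$, working throughout in the DG $\rR$-coalgebra picture of Proposition~\ref{prop:CatEmb}, where $\rR=(\smooth{\cM},Q)$.

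For $(1)\Rightarrow(2)$ I would only examine the binary bracket. Fix a torsion-free $\cL$-connection $\nabla$; by Theorem~\ref{prop:QSRN} the Kapranov structure has $R_{2}=-\atiyahcocycleQL$, and under the natural identification of the $\cR$-module underlying $H^{1}$ in~\eqref{eq:q2} with $\sections{S^{2}\cL^{\vee}\tensor\cL}$ the induced differential is the operator $\QL$ that defines the Atiyah class; hence the class $[R_{2}]$ equals $-\atiyahclassQL$. Since linearizability of the Kapranov $L_{\infty}[1]$ $\rR$-algebra does not depend on the choice of connection (Theorem~\ref{prop:QSRN}), Lemma~\ref{lem:Sock} forces $[R_{2}]=0$, whence $\atiyahclassQL=0$. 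This step is immediate once the identification of the two $H^{1}$'s is written out.

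For $(2)\Rightarrow(3)$, vanishing of $\atiyahclassQL$ yields a torsion-free $\cL$-connection $\nabla$ with $\atiyahcocycleQL=0$. The plan is to prove that the $\cR$-linear obstruction map $C^{\nabla}=\QU\circ\pbw^{\nabla}-\pbw^{\nabla}\circ\QS$ vanishes identically, by induction on symmetric degree. The base cases on $\cR$, on $\sections{\cL}$, and on $\sections{S^{2}\cL}$ are supplied directly by Lemma~\ref{lem:CNabla} together with $\atiyahcocycleQL=0$; for $n\geq3$, specializing the recursion~\eqref{eq:CNabla} to $\At^{\nabla}=0$ expresses $C^{\nabla}$ in symmetric degree $n$ solely in terms of $C^{\nabla}$ in degree $n-1$, so the induction closes and $\pbw^{\nabla}$ intertwines $\QS$ with $\QU$.

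For $(3)\Rightarrow(1)$: if $\pbw^{\nabla}$ intertwines $\QS$ with $\QU$ for some $\cL$-connection $\nabla$, then by Proposition~\ref{thm:GrPBW} it is an isomorphism of DG $\rR$-coalgebras from $(\ScL,\QS)$ to $(\UcL,\QU)$. Choosing any torsion-free $\cL$-connection $\nabla_{0}$, the map $\pbw^{\nabla_{0}}$ likewise identifies $(\ScL,\QU^{\pbw})$ --- the DG $\rR$-coalgebra attached by~\eqref{eq:QPBW} to the Kapranov $L_{\infty}[1]$ $\rR$-algebra --- with $(\UcL,\QU)$, so $(\pbw^{\nabla_{0}})^{-1}\circ\pbw^{\nabla}$ is an isomorphism of DG $\rR$-coalgebras between $(\ScL,\QS)$ and $(\ScL,\QU^{\pbw})$. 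Since $(\ScL,\QS)$ is precisely the DG $\rR$-coalgebra of the abelian $L_{\infty}[1]$ $\rR$-algebra $(\sections{\cL},\QL,0,0,\ldots)$, Proposition~\ref{prop:CatEmb} converts this into an $L_{\infty}[1]$ $\rR$-isomorphism, establishing linearizability. The only point requiring genuine care is the bookkeeping in $(2)\Rightarrow(3)$: checking that, once $\At^{\nabla}=0$ is imposed, the recursion~\eqref{eq:CNabla} really does involve $C^{\nabla}$ only in strictly lower symmetric degree, so the induction terminates; every other step is a direct invocation of a result already proved above.
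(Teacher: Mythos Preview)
Your proposal is correct and follows essentially the same cycle of implications as the paper's proof: $(1)\Rightarrow(2)$ via Lemma~\ref{lem:Sock}, $(2)\Rightarrow(3)$ via the induction on symmetric degree using Lemma~\ref{lem:CNabla} and the recursion~\eqref{eq:CNabla} with $\At^{\nabla}=0$, and $(3)\Rightarrow(1)$ via the DG $\rR$-coalgebra picture. The only cosmetic difference is in $(3)\Rightarrow(1)$: the paper uses the given $\nabla$ itself to compute $\QU^{\pbw}=\QS$ directly (so the Kapranov structure attached to that $\nabla$ is already abelian), whereas you introduce an auxiliary torsion-free $\nabla_{0}$ and compose PBW maps; your version has the minor advantage of cleanly separating the connection in~(3), which is not assumed torsion-free, from the one used to define the Kapranov structure in Theorem~\ref{prop:QSRN}.
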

\begin{proof}
First, \eqref{item:Linear} $\Rightarrow$ \eqref{item:At0}
follows directly from Lemma~\ref{lem:Sock}.

Next, we show \eqref{item:At0}$\Rightarrow$\eqref{item:AtPBW}.
Assume that the Atiyah class vanishes. Then there exists a torsion-free
$\cL$-connection $\nabla$ on $\cL$ such that $\At^{\nabla}=0$.
By Lemma~\ref{lem:CNabla}, $C^{\nabla}(l_{1}\odot l_{2})=0$
for all $l_{1},l_{2}\in \sections{\cL}$.
By the recursive formula~\eqref{eq:CNabla}, one can inductively show that
$C^{\nabla}\equiv 0$. By the definition of $C^{\nabla}$,
the assertion~\eqref{item:AtPBW} holds.

Finally, we show \eqref{item:AtPBW} $\Rightarrow$ \eqref{item:Linear}.
Recall that the $L_{\infty}[1]$ $\rR$-algebra structure of $(\cL,\QL)$
corresponds to the DG $\rR$-coalgebra $(\ScL,\QU^{\pbw})$ ---
see the proof of Theorem~\ref{prop:QSRN}. Thus, if there exists $\nabla$
such that $\pbw^{\nabla}$ intertwines with $\QS$ and $\QU$, then $\QU^{\pbw}=\QS$.
That is, the linear coefficient of $\QU^{\pbw}$ is $\QL$
and the higher coefficients of $\QU^{\pbw}$ all vanish.
Thus, the Kapranov $L_{\infty}[1]$ $\rR$-algebra structure on $(\cL,\QL)$
associated with $\nabla$ is abelian, which implies assertion~\eqref{item:Linear}.
This completes the proof.
\end{proof}

\begin{example}
If $(\cL,\QL)$ is a DGLA $(\frakg,d)$, any choice of $\cL$-connection $\nabla$ on $\cL$
induces an isomorphism
\[\pbw^{\nabla}:(\ScL,\QS) \to (\UcL,\QU)\]
of DG $\KK$-coalgebras (cf.\ Example~\ref{example:AtDGLA}).
Indeed, there is a canonical choice of torsion-free connection
$\nabla=\half \lie{\argument}{\argument}$ given by the half of the Lie bracket on $\frakg$.
Under this choice, the induced PBW isomorphism $\pbw^{\nabla}$ is the classical one.
\end{example}

\subsection{Homotopy abelian \texorpdfstring{$L_{\infty}[1]$}{L-infinity[1]}
\texorpdfstring{$\KK$}{K}-algebras}

By Corollary~\ref{cor:forget}, the $L_{\infty}[1]$ $\rR$-algebra structure
in Theorem~\ref{prop:QSRN0} induces an $L_{\infty}[1]$ $\KK$-algebra.

\begin{proposition}
\label{prop:KapranovLinfty}
Let $(\cL, \QL)$ be a DG Lie algebroid over a DG manifold $(\cM, Q)$.
Each choice of torsion-free $\cL$-connection $\nabla$ on $\cL$ determines
an $L_{\infty}[1]$ $\KK$-algebra on $\sections{\cL}$ such that
\begin{enumerate}
\item its unary bracket $\frakq_{1}:\sections{\cL}\to \sections{\cL}$ is the differential $\QL$;
\item its higher multibrackets $\frakq_{n}:S^{n}_{\KK}(\sections{\cL}) \to \sections{\cL}$,
with $n\geq 2$, is the composition
\[\frakq_{n}: S^{n}_{\KK}(\sections{\cL}) \to \sections{S^{n}\cL} \to \sections{\cL}\]
induced by a family of sections
$R_{n}\in \sections{S^{n}\cL^{\vee}\tensor \cL}$; and
\item $R_{2}=-\atiyahcocycleQL$.
\end{enumerate}

Furthermore, the $L_{\infty}[1]$ $\KK$-algebra structures on $\sections{\cL}$
arising from different choices of $\cL$-connections are all isomorphic.
\end{proposition}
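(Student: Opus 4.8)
The plan is to deduce this statement from its $\rR$-linear counterpart, Theorem~\ref{prop:QSRN}, by restricting scalars along the canonical morphism of \DGCAs{} $\KK\to\smooth{\cM}$. First I would fix a torsion-free $\cL$-connection $\nabla$ on $\cL$. Theorem~\ref{prop:QSRN} equips the DG $\rR$-module $(\sections{\cL},\QL)$ --- with $\rR=(\smooth{\cM},Q)$ --- with an $L_{\infty}[1]$ $\rR$-algebra structure $(\QL,R_{2},R_{3},\ldots)$ whose binary bracket satisfies $R_{2}=-\atiyahcocycleQL$ and whose higher brackets are graded symmetric graded $\smooth{\cM}$-multilinear maps $R_{n}\colon\nst{\sections{\cL}}{\cR}{n}\to\sections{\cL}$ of degree $1$. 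By Corollary~\ref{cor:forget}(1), forgetting the $\rR$-structure along $\KK\to\smooth{\cM}$ produces an $L_{\infty}[1]$ $\KK$-algebra on the same cochain complex $(\sections{\cL},\QL)$; in particular its unary bracket is $\frakq_{1}=\QL$, which is assertion~(1), and the tensor underlying its binary bracket is still $R_{2}=-\atiyahcocycleQL$, which is assertion~(3).

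Next I would make the shape of the higher $\KK$-brackets explicit. Since $\cL\to\cM$ is a finite-rank graded vector bundle, $\sections{\cL}$ is a finitely generated projective $\smooth{\cM}$-module, so there is the canonical identification of $\smooth{\cM}$-modules $\nst{\sections{\cL}}{\cR}{n}\cong\sections{S^{n}\cL}$ --- the same one already used in the description of $\ScL$ --- under which a graded symmetric $\smooth{\cM}$-multilinear map $\nst{\sections{\cL}}{\cR}{n}\to\sections{\cL}$ is exactly a bundle map, i.e.\ a section $R_{n}\in\sections{S^{n}\cL^{\vee}\tensor\cL}$. The explicit description of restriction of scalars recalled in the change-of-basis subsection says that the induced $\KK$-multibracket is obtained from $R_{n}$ by precomposition with the natural projection $S^{n}_{\KK}(\sections{\cL})\twoheadrightarrow\nst{\sections{\cL}}{\cR}{n}$. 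Hence $\frakq_{n}$ factors as $S^{n}_{\KK}(\sections{\cL})\to\sections{S^{n}\cL}\xrightarrow{R_{n}}\sections{\cL}$, which is assertion~(2).

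For the final claim I would invoke Theorem~\ref{prop:QSRN}(2): any two torsion-free $\cL$-connections $\nabla$ and $\widetilde{\nabla}$ give $L_{\infty}[1]$ $\rR$-algebra structures on $\sections{\cL}$ that are isomorphic \emph{as $L_{\infty}[1]$ $\rR$-algebras}, and by Corollary~\ref{cor:forget}(2) such an $L_{\infty}[1]$ $\rR$-isomorphism is automatically an $L_{\infty}[1]$ $\KK$-isomorphism. Therefore the $L_{\infty}[1]$ $\KK$-algebra structures attached to the various torsion-free connections are all isomorphic.

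I do not expect any real obstacle here: the proposition is essentially a bookkeeping consequence of Theorem~\ref{prop:QSRN} and Corollary~\ref{cor:forget}. The only point deserving a line of care is the translation between graded $\smooth{\cM}$-multilinear brackets on $\sections{\cL}$ and genuine tensors $R_{n}\in\sections{S^{n}\cL^{\vee}\tensor\cL}$ (which rests on local freeness of $\cL$, i.e.\ Serre--Swan), together with the observation that restriction of scalars along $\KK\to\smooth{\cM}$ introduces no data beyond the $R_{n}$ --- it merely precomposes with the projection $S^{n}_{\KK}(\sections{\cL})\twoheadrightarrow\nst{\sections{\cL}}{\cR}{n}\cong\sections{S^{n}\cL}$.
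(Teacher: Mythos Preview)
Your proposal is correct and follows exactly the approach the paper intends: the paper introduces this proposition with the sentence ``By Corollary~\ref{cor:forget}, the $L_{\infty}[1]$ $\rR$-algebra structure in Theorem~\ref{prop:QSRN0} induces an $L_{\infty}[1]$ $\KK$-algebra'' and then states the proposition without further proof, treating it as an immediate consequence of Theorem~\ref{prop:QSRN} under restriction of scalars. Your write-up simply makes the implicit bookkeeping explicit, including the Serre--Swan identification $\nst{\sections{\cL}}{\cR}{n}\cong\sections{S^{n}\cL}$ needed to see the higher brackets as genuine tensors $R_n$.
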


Now we are ready to state the main theorem of this paper.

\begin{theorem}\label{thm:LAHA}
The $L_{\infty}[1]$ $\KK$-algebra induced by
the Kapranov $L_{\infty}[1]$ $\rR$-algebras of a DG Lie algebroid
is always homotopy abelian.
\end{theorem}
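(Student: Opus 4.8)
The plan is to produce, over the ground field $\KK$, a $\KK$-linear section of the evaluation map that is also a cochain map, and then quote Proposition~\ref{prop:S-split}. Fix a torsion-free $\cL$-connection $\nabla$ on $\cL$. By the proof of Theorem~\ref{prop:QSRN}, the Kapranov $L_{\infty}[1]$ $\rR$-algebra on $(\sections{\cL},\QL)$ is the one attached by Proposition~\ref{prop:CatEmb} to the DG $\rR$-coalgebra $(\ScL,\QU^{\pbw})$, where $\QU^{\pbw}=(\pbw^{\nabla})^{-1}\circ\QU\circ\pbw^{\nabla}$ and $\pbw^{\nabla}\colon\ScL\to\UcL$ is the PBW isomorphism of graded $\cR$-coalgebras. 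In the notation of~\eqref{splitseqDG} this means $\DQ=\widetilde{\QL}+\coderQ=\QU^{\pbw}$, so $\liederivative{\DQ}=[\QU^{\pbw},-]$, and $d_{\LL}=\QL$.

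The crucial point is that, while $\UcL$ is only a \emph{left} $\cR$-module, \emph{right} multiplication by a section is $\cR$-linear. For $x\in\sections{\cL}$, let $R_{x}\colon\UcL\to\UcL$ be the degree $\degree{x}$ map $R_{x}(u)=(-1)^{\degree{u}\degree{x}}\,ux$. Using that $x\in\sections{\cL}\subset\UcL$ is primitive, that $\Delta$ is an algebra morphism, and that $\QU$ is the degree~$1$ derivation of $\UcL$ extending $\QL$, one verifies by a direct sign-careful computation that $R_{x}$ is a graded $\cR$-linear coderivation of the $\cR$-coalgebra $\UcL$ and that $[\QU,R_{x}]=R_{\QL(x)}$. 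Conjugating by the coalgebra isomorphism $\pbw^{\nabla}$, set
\[
\tau\colon\sections{\cL}\longrightarrow\coDer_{\grR}(S_{\grR}(\sections{\cL})),\qquad \tau(x):=(\pbw^{\nabla})^{-1}\circ R_{x}\circ\pbw^{\nabla}.
\]
Then $\tau$ is $\KK$-linear of degree $0$; since $\pbw^{\nabla}(1_{\cR})=1$, $R_{x}(1)=x$ and $\pbw^{\nabla}$ restricts to the identity on $\sections{\cL}$, we get $\operatorname{ev}_{1}(\tau(x))=x$; and since $\pbw^{\nabla}$ intertwines $\QU^{\pbw}$ and $\QU$,
\[
\liederivative{\DQ}(\tau(x))=[\QU^{\pbw},\tau(x)]=(\pbw^{\nabla})^{-1}\,[\QU,R_{x}]\,\pbw^{\nabla}=(\pbw^{\nabla})^{-1}\,R_{\QL(x)}\,\pbw^{\nabla}=\tau(\QL(x)).
\]
Hence $\tau$ is a $\KK$-linear cochain map splitting $\operatorname{ev}_{1}$; that is, the short exact sequence~\eqref{splitseqDG} splits in the category of DG $\KK$-modules.

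Applying Proposition~\ref{prop:S-split} to the canonical morphism of \DGCAs $\KK\to\rR$ then shows that the Kapranov $L_{\infty}[1]$ $\rR$-algebra is linearizable, a fortiori homotopy abelian, \emph{as an $L_{\infty}[1]$ $\KK$-algebra}, which is the assertion. The only genuine work is the verification of the two properties of $R_{x}$ above — a bookkeeping computation in the enveloping algebra $\UcL$ whose sole subtlety is keeping track of Koszul signs — with the identity $[\QU,R_{x}]=R_{\QL(x)}$ being exactly what makes $\tau$ a cochain map. I do not expect the argument to improve to $\rR$-linearizability: $R_{x}$ is $\cR$-linear in $u$ but not in $x$ (since $f$ and $x$ do not commute in $\UcL$), so $\tau$ is not $\grR$-linear, in agreement with Theorem~\ref{thm:DGPBW} and the possible non-vanishing of the Atiyah class.
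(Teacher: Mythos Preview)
Your proof is correct and follows essentially the same route as the paper: both invoke Proposition~\ref{prop:S-split} with $\rS=\KK$ and construct the required $\KK$-linear splitting by conjugating, via $\pbw^{\nabla}$, the right-multiplication coderivation $u\mapsto(-1)^{\degree{u}\degree{x}}ux$ on $\UcL$. Your closing remark that $\tau$ fails to be $\grR$-linear in $x$, hence does not upgrade to an $\rR$-linearization, is also on point and consistent with Theorem~\ref{thm:DGPBW}.
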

\begin{proof}
Let $(\cL, \QL)$ be a DG Lie algebroid
over a DG manifold $(\cM,Q)$, and
let $\nabla$ be a torsion-free $\cL$-connection on $\cL$.
We need to show that the Kapranov $L_{\infty}[1]$ $\KK$-algebra
$(\sections{\cL},\QL,\frakq_{2}, \frakq_{3},\ldots)$ associated with $\nabla$
is isomorphic to the abelian $L_{\infty}[1]$ $\KK$-algebra $(\sections{\cL}, \QL,0,0,0,\ldots)$.
According to Proposition~\ref{prop:S-split} with $\rS=\KK$,
it suffices to show that there exists a right inverse
$\tau$ of $\ev_{1_{\KK}}:(\coDer_{\grR}(\ScL), \liederivative{\QU^{\pbw}})\to (\sections{\cL},\QL)$
in the category of DG $\KK$-modules.

Define a DG $\KK$-module homomorphism
\[ \tau:(\sections{\cL},\QL) \to (\coDer_{\grR}(\ScL), \liederivative{\QU^{\pbw}}) \]
by
\begin{equation} \label{eq:Tau}
\tau(\eta): \mathfrak{l}\mapsto (-1)^{\degree{\eta}\cdot
\degree{\mathfrak{l}}} (\pbw^{\nabla})^{-1}\big( \pbw^{\nabla}(\mathfrak{l})\cdot \eta \big)
\end{equation}
for $\eta\in \sections{\cL}$ and $\mathfrak{l}\in \ScL$. We claim that $\tau$
is indeed well-defined and is a DG right inverse of $\ev_{1_\KK}$.

By \eqref{eq:QPBW}, the map $\pbw^{\nabla}:(\ScL,\QU^{\pbw})\to (\UcL,\QU)$
is an isomorphism of DG $\rR$-coalgebras. Thus, it suffices to show that
$\tilde{\tau}:(\sections{\cL},\QL) \to (\coDer_{\grR}(\UcL), \liederivative{\QU})$,
defined by
\[ \tilde{\tau}(\eta) : u \mapsto (-1)^{\degree{u}\cdot \degree{\eta}} u \cdot \eta \]
for $\eta\in \sections{\cL}$ and $u\in \UcL$, is well-defined.
Indeed, it is clear that $\tilde{\tau}$ is graded $\grR$-linear, and
by Equation~\eqref{eq:ComultUcL}, we have
\[\Delta(u\cdot \eta)=\Delta(u) \cdot (\eta\tensor 1 + 1\tensor \eta),
\qquad \forall\eta\in \sections{\cL},\quad \forall u\in \UcL ,\]
or equivalently, $\tilde{\tau}(\eta)\in \coDer_{\KK}(\UcL)$.
Moreover, the equation
\[\QU(u\cdot \eta)=\QU(u)\cdot \eta + (-1)^{\degree{u}}u\cdot \QL(\eta),
\qquad \forall \eta\in \sections{\cL},\quad \forall u\in \UcL\]
implies that $\tilde{\tau}$ is compatible with differentials.
Thus, $\tilde{\tau}$ is well-defined, and so is $\tau$.
Furthermore, it is easy to check that $\ev_{1_\KK}\circ \tau(\eta)=\eta$. This completes the proof.
\end{proof}

In particular, when $(\cL,\QL)$ is the tangent DG Lie algebroid
$T\cM\to\cM$ of a DG manifold $(\cM,Q)$, we have the following:

\begin{corollary}
\label{cor:DGManHA}
\begin{enumerate}
\item Given any DG manifold $(\cM,Q)$, there is
an induced Kapranov $L_{\infty}[1]$ $\rR$-algebra
structure on $(\XX(\cM),\liederivative{Q})$, whose
binary bracket is
a cocycle representative of the Atiyah class
of the DG manifold;
\item This $L_{\infty}[1]$ $\rR$-algebra $\XX(\cM)$ is linearlsable if and
only if the Atiyah class of the DG manifold vanishes;
\item The $L_{\infty}[1]$ $\KK$-algebra $\XX(\cM)$ induced by
this Kapranov $L_{\infty}[1]$ $\rR$-algebras of a DG manifold
is always homotopy abelian.
\end{enumerate}
\end{corollary}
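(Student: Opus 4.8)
The plan is to derive all three assertions by specializing the general results of this section to the tangent DG Lie algebroid. Recall that, for any DG manifold $(\cM,Q)$, the tangent bundle $T\cM\to\cM$ equipped with the Lie derivative $\liederivative{Q}$ along $Q$ is a DG Lie algebroid over $(\cM,Q)$ (the tangent DG Lie algebroid), with $\sections{T\cM}=\XX(\cM)$ and $\QL=\liederivative{Q}$; its associated base \DGCA{} is $\rR=(\smooth{\cM},Q)$. Moreover, by Example~\ref{example:AtDGLA}(2), the Atiyah class $\atiyahclassQL$ of this DG Lie algebroid is, by definition, the Atiyah class of the DG manifold $(\cM,Q)$.

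Granting these identifications, assertion~(1) follows from Theorem~\ref{prop:QSRN0} applied to $(\cL,\QL)=(T\cM,\liederivative{Q})$ --- together with the explicit description of the binary bracket as $-\atiyahcocycleQL$ furnished by Theorem~\ref{prop:QSRN} --- which yields a Kapranov $L_{\infty}[1]$ $\rR$-algebra structure on $(\XX(\cM),\liederivative{Q})$ whose $2$-ary bracket is a cocycle representative of $\atiyahclassQL$, i.e.\ of the Atiyah class of the DG manifold. Assertion~(2) follows from the equivalence of \eqref{item:Linear} and \eqref{item:At0} in Theorem~\ref{thm:DGPBW}, applied to the same DG Lie algebroid: the Kapranov $L_{\infty}[1]$ $\rR$-algebra on $\XX(\cM)$ is linearizable if and only if $\atiyahclassQL=0$, which by the above is precisely the vanishing of the Atiyah class of the DG manifold. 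Finally, assertion~(3) is immediate from Theorem~\ref{thm:LAHA}, which states that the $L_{\infty}[1]$ $\KK$-algebra underlying the Kapranov $L_{\infty}[1]$ $\rR$-algebra of any DG Lie algebroid is homotopy abelian.

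Since every step is a direct invocation of a theorem already established above, there is no substantial obstacle to overcome. The only points requiring attention are the bookkeeping identifications $\sections{T\cM}=\XX(\cM)$ and $\QL=\liederivative{Q}$, so that $(\sections{\cL},\QL)$ becomes $(\XX(\cM),\liederivative{Q})$, and the reading of Example~\ref{example:AtDGLA}(2) matching the two notions of Atiyah class; both are routine.
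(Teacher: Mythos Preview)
Your proposal is correct and follows exactly the paper's approach: the corollary is obtained by specializing Theorems~\ref{prop:QSRN0}, \ref{thm:DGPBW}, and~\ref{thm:LAHA} to the tangent DG Lie algebroid $(\cL,\QL)=(T\cM,\liederivative{Q})$, using Example~\ref{example:AtDGLA}(2) to identify the Atiyah class of this DG Lie algebroid with that of the DG manifold. The paper itself does not spell out a proof beyond the sentence introducing the corollary as this specialization.
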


\begin{remark}
It is important to note that the base \DGCA{}
in Theorem~\ref{thm:LAHA} and Corollary~\ref{cor:DGManHA}
is the field $\KK=(\KK,0)$.
Although it is always $\KK$-homotopy abelian, and equivalently, $\KK$-linearizable,
the corresponding Kapranov $L_{\infty}[1]$ $\rR$-algebra may not be $\rR$-linearizable in general.
\end{remark}

\subsection{The case of Lie pair}
\label{sec:DGLAdLP}

Let $M$ be a smooth manifold. A Lie pair consists of a pair of
Lie algebroids $(L,A)$ over $M$ such that $A$ is a Lie subalgebroid of $L$.

\begin{example}\label{ex:foliation}
Let $M$ be a smooth manifold and $\cF$ be a regular foliation on $M$.
Then the integrable distribution $\tangent{\cF}\subset \tangent{M}$ of $\cF$
is a Lie subalgebroid of $\tangent{M}$.
Thus $(\tangent{M},\tangent{\cF})$ is a Lie pair.
Similarly, when $X$ is a complex manifold and $\tangentzo{X} \subset \tangentcc{X}$
is the anti-holomorphic tangent bundle, then $(\tangentcc{X}, \tangentzo{X})$ is a Lie pair.
\end{example}

Associated to every Lie pair $(L,A)$, there is a natural DG Lie algebroid $(\cL,\QL)$.
Below, we will briefly recall its construction. For details, see~\cite{StienonVitaglianoXu}.

Conisder the DG manifold $(\cM,Q)=(A[1],d_{\CE})$.
More precisely, the graded algebra of functions $C^\infty (\cM)$
is $\Omega_{A}^{\bullet}:=\sections{\Lambda^{\bullet} A^{\vee}}$,
and the homological vector field $Q$ is the Chevalley--Eilenberg differential
$d_{\CE}$ of the Lie algebroid $A$.

Next, the graded Lie algebroid $\cL$ is the pull-back Lie algebroid $\cL=\pi^{!}L$
of $L\to M$ via the graded manifold map $\pi:A[1]\to M$.
More precisely, let $\rho_{L}:L\to \tangent{M}$ be the anchor map
of $L$ and $\tangent{\pi}:\tangent{A[1]}\to \tangent{M}$ be the tangent map of $\pi$.
The pull-back vector bundle $\pi^{!}L=\tangent{A[1]}\times_{\tangent{M}}L$
of $\rho_{L}$ over $\tangent{\pi}$
\[ \begin{tikzcd}
\pi^{!}L \arrow{r} \arrow{d}& L \arrow{d}{\rho_{L}}\\
\tangent{A[1]}\arrow{r}{\tangent{\pi}} & \tangent{M}
\end{tikzcd} \]
is a graded Lie algebroid over $\cM$. The graded space
of sections $ \sections{\pi^{!}L}$ consists of
elements of the form $(X,\sum_{i} f_{i}\tensor \xi_{i})\in
\XX(A[1])\times \Omega_{A}(L)$, with $f_{i}\in \Omega_{A}$
and $\xi_{i}\in \sections{M; L}$, satisfying
$\tangent{\pi}(X)=\sum_{i}f_{i}\cdot \rho_{L}(\xi_{i})$.
The anchor map $\rho_{\pi^{!}L}$ is the projection to $\XX(A[1])$
and the Lie bracket on $\sections{\pi^{!}L}$ is defined by
\begin{multline}
\liecL{(X, \sum_{i}f_{i}\tensor \xi_{i})}{(Y, \sum_{j}g_{j}\tensor \eta_{j})} = \\
= (\lie{X}{Y}, \sum_{j}X(g_{j})\tensor \eta_{j}
- (-1)^{\degree{Y}\cdot \degree{f_{i}}} \sum_{i} Y(f_{i}) \tensor \xi_{i}
+ \sum_{i,j} f_{i}\cdot g_{j}\tensor \lie{\xi_{i}}{\eta_{j}})
\end{multline}
for $X,Y\in \XX(A[1])$, $f_{i},g_{j}\in \Omega_{A}$ and $\xi_{i},\eta_{j} \in \sections{M; L}$.

Finally, let $s\in \sections{\pi^{!}L}$ be the section defined by $s=(d_{\CE},\iota)$,
where $\iota:A[1]\to L$ is the composition of the inclusion $A\into L$ with the degree
shift map $A[1]\to A$.
More explicitly, one can express the section $s$ in a local chart as follows.

Let $(x^1,\ldots,x^n)$ be a local chart on $M$,
$(e_1,\ldots,e_m)$ be a local frame for $A$,
and $(\xi^{1},\ldots,\xi^{m})$ be the dual frame for $A^{\vee}$.
Together, we may view $(x^1,\ldots,x^n;\xi^1,\ldots,\xi^m)$ as a local chart on $A[1]$.

Assume that
\[ \rho_A (e_i) =\sum_j a_i^j (x)\field{x}{j} \qquad\text{and}\qquad
\lie{e_i}{e_j} =\sum_k c^k_{ij} (x) e_k ,\]
where the $a_i^j (x)$ and $c^k_{ij} (x)$ are functions of the local coordinates $x^1,\dots,x^n$.
Then, in the local chart $(x^1,\cdots,x^n,\xi^1,\cdots,\xi^m)$ on $A[1]$,
the homological vector field $d_{\CE}$ reads
\begin{equation}\label{eq:dA}
d_{\CE}=\sum_{i,j} a_i^j (x)\xi^i \field{x}{j}
-\frac{1}{2}\sum_{i,j,k} c^k_{ij}(x) \xi^i\xi^j
\frac{\partial}{\partial \xi^k}=\sum_i \xi^i X_i ,
\end{equation}
where
\begin{equation}\label{eq:Xi}
X_i=\sum_j a_i^j (x)\field{x}{j}-\frac{1}{2}\sum_{j,k} c^k_{ij} (x)\xi^j
\frac{\partial}{\partial \xi^k}
.\end{equation}

Hence
\begin{equation}\label{eq:sphi}
s=\sum_i\xi^i\cdot(X_i, e_i\circ\pi)
.\end{equation}

It is simple to see that $s$ is a degree $1$ section satisfying
$\rho_{\cL}(s)=d_{\CE}$ and $\liecL{s}{s}=0$.
Thus it defines a DG Lie algebroid $(\cL,\QL)=(\pi^{!}L, \liederivative{s})$
over $(\cM,Q)=(A[1],d_{\CE})$, where $\liederivative{s}:=\liecL{s}{\argument}$
is the Lie derivative along $s\in\sections{\cL}$.
According to Theorem~\ref{prop:QSRN0}, we have

\begin{theorem}\label{thm:Liepair1}
Given a Lie pair $(L,A)$, there is an induced
$L_{\infty}[1]$ $\rR$-algebra structure
on $\sections{A[1]; \pi^{!}L}$, where $\rR$
is the \DGCA{} $(\Omega_{A}^{\bullet},d_{\CE})$,
and the unary bracket is $\liederivative{s}$,
while the $2$-ary bracket is a cocycle representative
of the Atiyah class of the DG Lie algebroid
$(\pi^{!}L, \liederivative{s})$.
\end{theorem}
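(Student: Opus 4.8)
The plan is to obtain Theorem~\ref{thm:Liepair1} as a direct application of Theorem~\ref{prop:QSRN0}, once the geometric data of the Lie pair $(L,A)$ has been repackaged as a DG Lie algebroid over a DG manifold.

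First I would record that the construction carried out just above already performs this repackaging. The graded manifold $\cM = A[1]$ carries the homological vector field $Q = d_{\CE}$, and its algebra of functions is $\smooth{\cM} = \Omega_{A}^{\bullet} = \sections{\Lambda^{\bullet}A^{\vee}}$, so the associated \DGCA{} is $\rR = (\Omega_{A}^{\bullet}, d_{\CE})$. The pull-back $\pi^{!}L = \tangent{A[1]}\times_{\tangent{M}}L$ is a graded Lie algebroid over $\cM$, with anchor the projection onto $\XX(A[1])$ and bracket as displayed above. Setting $\QL := \liederivative{s} = \liecL{s}{\argument}$, where $s = (d_{\CE},\iota)$ is the degree $1$ section of Equation~\eqref{eq:sphi}, the pair $(\pi^{!}L, \QL)$ is a DG Lie algebroid over $(A[1], d_{\CE})$. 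The one genuine verification here is that $s$ is a well-defined section of the pull-back algebroid and satisfies $\liecL{s}{s} = 0$; this follows either abstractly from the fact that $A$ is a Lie subalgebroid of $L$, or from the local expression~\eqref{eq:sphi} together with Equations~\eqref{eq:dA}--\eqref{eq:Xi}, and is in essence equivalent to $d_{\CE}$ being a homological vector field. Granting $\liecL{s}{s}=0$, the operator $\QL = \liecL{s}{\argument}$ squares to zero because $\QL^{2}(l) = \liecL{s}{\liecL{s}{l}} = \half\liecL{\liecL{s}{s}}{l} = 0$ by the graded Jacobi identity, and the compatibility conditions defining a DG Lie algebroid are then formal consequences of $\liecL{s}{\argument}$ being a degree $1$ derivation of the bracket and of $\rho_{\cL}(s) = d_{\CE}$ (the third condition being redundant, as noted in the text).

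Second, I would apply Theorem~\ref{prop:QSRN0} verbatim to the DG Lie algebroid $(\cL, \QL) = (\pi^{!}L, \liederivative{s})$ over the DG manifold $(\cM, Q) = (A[1], d_{\CE})$. This yields an $L_{\infty}[1]$ $\rR$-algebra structure on $(\sections{\pi^{!}L}, \liederivative{s})$ with $\rR = (\Omega_{A}^{\bullet}, d_{\CE})$, whose unary bracket is $\liederivative{s}$ and whose binary bracket is a cocycle representative of the Atiyah class $\atiyahclassQL$ of $(\pi^{!}L, \liederivative{s})$ --- which is exactly the content of Theorem~\ref{thm:Liepair1}.

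I do not expect any real obstacle: everything of substance is already contained in Theorem~\ref{prop:QSRN0}, and the preparatory step amounts to the routine check that $(\pi^{!}L, \liederivative{s})$ is a DG Lie algebroid, i.e.\ that $s$ is a well-defined square-zero degree $1$ section. If one wishes an explicit description, the $L_{\infty}[1]$ $\rR$-algebra structure can, as in the proof of Theorem~\ref{prop:QSRN}, be produced by choosing a torsion-free $\pi^{!}L$-connection $\nabla$ on $\pi^{!}L$ and transporting the DG $\rR$-coalgebra structure of $U(\pi^{!}L)$ along the PBW map $\pbw^{\nabla}$, in which case the binary bracket is $-\At^{\nabla}$; but this refinement is not needed for the existence statement.
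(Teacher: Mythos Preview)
Your proposal is correct and matches the paper's own argument exactly: the paper simply states ``According to Theorem~\ref{prop:QSRN0}, we have'' Theorem~\ref{thm:Liepair1}, relying on the preceding construction of $(\pi^{!}L,\liederivative{s})$ as a DG Lie algebroid over $(A[1],d_{\CE})$. Your additional verification that $s$ is a square-zero degree~$1$ section is a useful expansion of what the paper leaves as a routine check.
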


\subsection{Another Kapranov \texorpdfstring{$L_{\infty}[1]$}{L-infinity[1]}
\texorpdfstring{$\rR$}{R}-algebra associated with a Lie pair}

In~\cite{MR4271478}, the following result was proved. For the precise statement,
see Theorem~\ref{thm:LAKapranov} below.

Throughout this section, $(L,A)$ is a Lie pair and $B:=L/A$ is the quotient bundle.

\begin{theorem}
\label{thm:Liepair2}
Given a Lie pair $(L, A)$, there is an $L_{\infty}[1]$ $\rR$-algebra structure
on $\Omega_{A}(B):=\sections{\Lambda A^{\vee}\tensor B}$,
where $\rR$ is the \DGCA{} $(\Omega_{A}^{\bullet}, d_{\CE})$,
and the unary bracket is the Chevalley--Eilenberg differential corresponding
to the Bott $A$-connection on $B$, while the $2$-ary bracket is a cocycle representative
of the Atiyah class of the Lie pair $(L,A)$.
\end{theorem}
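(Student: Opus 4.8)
I would deduce this from the Kapranov $L_{\infty}[1]$ $\rR$-algebra on $\sections{\pi^{!}L}$ already produced in Theorem~\ref{thm:Liepair1} (with $\rR=(\Omega_{A}^{\bullet},d_{\CE})$), by transporting it along the canonical projection onto $\Omega_{A}(B)$. The first step is structural: $\pi^{!}A\subset\pi^{!}L$ is a sub-Lie-algebroid over $A[1]$, it is preserved by $\liederivative{s}$ because the section $s=(d_{\CE},\iota)$ already lies in $\sections{\pi^{!}A}$ (as $\iota$ factors through $A\into L$), the quotient bundle $\pi^{!}L/\pi^{!}A$ is canonically $\pi^{*}B$, and a direct computation identifies the differential that $\liederivative{s}$ induces on $\sections{\pi^{!}L/\pi^{!}A}\cong\Omega_{A}(B)$ with the Chevalley--Eilenberg differential $d_{\CE}^{B}$ of the Bott $A$-connection on $B$. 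This gives a short exact sequence of DG $\rR$-modules
\[ 0\longrightarrow(\sections{\pi^{!}A},\liederivative{s})\longrightarrow(\sections{\pi^{!}L},\liederivative{s})\longrightarrow(\Omega_{A}(B),d_{\CE}^{B})\longrightarrow 0 . \]

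The crucial observation is that $(\sections{\pi^{!}A},\liederivative{s})$ is $\grR$-linearly contractible: the presence of the tautological section $\iota$ inside $s$ makes $\liederivative{s}$ on $\sections{\pi^{!}A}$ (after choosing a splitting $\pi^{!}A\cong\pi^{*}A[1]\oplus\pi^{*}A$) a perturbation of the mapping cone of the degree-shift isomorphism induced by $\iota$, from which an $\grR$-linear contracting homotopy can be read off. Feeding this, together with an $\grR$-linear splitting of the sequence above, into the symmetrized tensor trick produces an $\grR$-linear contraction of the DG $\rR$-module $(\sections{\pi^{!}L},\liederivative{s})$ onto $(\Omega_{A}(B),d_{\CE}^{B})$. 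Applying the Homotopy Transfer Theorem~\ref{thm:transfer} to the Kapranov $L_{\infty}[1]$ $\rR$-algebra of $(\pi^{!}L,\liederivative{s})$ along this contraction then yields an $L_{\infty}[1]$ $\rR$-algebra structure on $\Omega_{A}(B)$ with unary bracket $d_{\CE}^{B}$, together with $L_{\infty}[1]$ $\rR$-quasi-isomorphisms $F\colon\Omega_{A}(B)\to\sections{\pi^{!}L}$ and $G\colon\sections{\pi^{!}L}\to\Omega_{A}(B)$ extending the contraction maps.

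It remains to recognise the binary bracket. By the explicit transfer formulas the quadratic term is $r_{2}=g\circ q_{2}\circ(f\odot f)$, where $q_{2}=-\atiyahcocycleQL$ (with $\QL=\liederivative{s}$) is the Atiyah $1$-cocycle of the DG Lie algebroid $(\pi^{!}L,\liederivative{s})$ from Theorem~\ref{prop:QSRN}, and $f,g$ are the linear parts of $F,G$. Since $f$ and $g$ are quasi-isomorphisms, the transfer data induce an isomorphism in cohomology on the relevant $\Hom$-complexes, so $[r_{2}]$ is carried to $\atiyahclassQL$ for $\cL=\pi^{!}L$; and one then checks that this class agrees with the Atiyah class $\alpha_{(L,A)}$ of the Lie pair, by comparing the cocycle built out of $\liederivative{s}$ and a lifted connection with the standard definition of the Lie-pair Atiyah cocycle. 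I expect this last comparison — two a priori different incarnations of the Atiyah class — to be the main obstacle, alongside the routine but fiddly verification that the quotient differential is exactly $d_{\CE}^{B}$; the economical alternative, which is presumably the route actually taken here, is to cite the main theorem of~\cite{MR4271478} directly, leaving the precise statement to Theorem~\ref{thm:LAKapranov}, the quasi-isomorphism with $\sections{\pi^{!}L}$ to Theorem~\ref{thm:LPHA}, and (for the complex-manifold case) the identification of the two Atiyah classes to~\cite{MR3439229}.
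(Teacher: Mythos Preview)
Your proposal takes a genuinely different route from the paper. The paper does not derive Theorem~\ref{thm:Liepair2} by homotopy transfer from $\sections{\pi^{!}L}$; as you correctly anticipate in your final sentence, it cites~\cite{MR4271478} and then \emph{recovers} the statement directly (as Theorem~\ref{thm:LAKapranov}) via a PBW construction intrinsic to the Lie pair: one takes the Lie-pair PBW isomorphism $\underline{\pbw}:\Omega_{A}(SB)\to\Omega_{A}\otimes_{R}\cUB$, pulls back the natural differential $d_A$ on the target to obtain $d^{\lightning}$ on $\Omega_{A}(SB)$, checks that $(\Omega_{A}(SB),d^{\lightning})$ is a DG $\rR$-coalgebra with $d^{\lightning}(1)=0$, and reads off the $L_{\infty}[1]$ $\rR$-structure via Proposition~\ref{prop:CatEmb}. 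The binary bracket $\widetilde{R}_{2}$ is then computed explicitly and recognised as the Lie-pair Atiyah cocycle by inspection --- no comparison of two different Atiyah classes is needed.

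Your approach is essentially the mechanism the paper deploys \emph{later}, in Theorem~\ref{thm:LPHA}, to show that the two Kapranov $L_{\infty}[1]$ $\rR$-algebras are quasi-isomorphic (though there the argument builds an explicit DG $\rR$-coalgebra morphism $P_{\cU}^{\pbw}$ rather than invoking HTT). Your route would work --- the contraction of $(\sections{\pi^{!}L},\liederivative{s})$ onto $(\Omega_{A}(B),d^{\Bott})$ is exactly~\cite[Theorem~3.1]{StienonVitaglianoXu}, cited in the proof of Theorem~\ref{thm:LPHA} --- but, as you note, it leaves the identification of $[r_{2}]$ with $\alpha_{(L,A)}$ as a separate step. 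In the paper that comparison is outsourced to Liao~\cite{MR4665716} and used only after both structures are built independently. The paper's direct Lie-pair PBW construction avoids this detour and delivers the Atiyah cocycle on the nose; your route, by contrast, would unify the two constructions from the outset at the cost of deferring the Atiyah-class identification.
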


Such an $L_{\infty}[1]$ algebra is called Kapranov $L_{\infty}[1]$ $\rR$-algebra
of the Lie pair $(L, A)$.

For the Lie pair $(\tangentcc{X}, \tangentzo{X})$ arising
from a Kähler manifold $X$, such an $L_{\infty}[1]$
structure on $\Omega_{A}(B)=\Omega_{X}^{0,\bullet}(T_{X})$
is due to Kapranov \cite{MR1671737}.

It is natural to ask what
is the relation between these two $L_{\infty}[1]$ $\rR$-algebras
associated to a Lie pair $(L, A)$.
In what follows, we prove that they are indeed quasi-isomorphic
as $L_{\infty}[1]$ $\rR$-algebras.

Let us briefly review this second type
of Kapranov $L_{\infty}[1]$ $\rR$-algebra
associated with a Lie pair.
For details, see~\cite{MR4271478}.
We begin with PBW isomorphism for a Lie pair.
For simplicity of notations, we denote by $R=\smooth{M}$.

Given a Lie pair $(L,A)$, we have an exact sequence of vector bundles
\begin{equation}\label{eq:LPSES}
0\to A\into L \xto{q} B \to 0.
\end{equation}
There is a canonical flat $A$-connection
$\nabla^{\Bott}:\sections{A}\times \sections{B}\to \sections{B}$ on $B$,
known as the Bott $A$-connection, defined by
\[ \nabla^{\Bott}_{a}b = q[a, j(b)]_{L}, \qquad \forall a\in \sections{A},
\quad \forall b\in \sections{B},\]
where $j: B\to L$ is any splitting of \eqref{eq:LPSES}.
Note that $\nabla^{\Bott}$ is independent of the choices of splittings $j$.

Recall that given a Lie algebroid $L$, there is a left $R$-coalgebra
$\cU(L)$, the universal enveloping algebra of $L$. Consider the quotient space
$\cUB:=\cU(L)/\big(\cU(L)\cdot \sections{A}\big)$ of $\cU(L)$ by the left ideal
$\cU(L)\cdot \sections{A}$ generated by $\sections{A}$.
Observe that the comultiplication $\Delta$ of $\cU(L)$ satisfies
\[\Delta(\cU (L) \cdot \sections{A})\subset
\big(\cU (L) \cdot \sections{A} \tensor \cU (L)\big) +
\big(\cU (L) \tensor \cU (L) \cdot \sections{A}\big) . \]
Thus $\Delta$ descends to the quotient space $\cUB$.
Therefore $\cUB$ is naturally an $R$-coalgebra.

On the other hand, it was shown in~\cite{MR4271478} that both $\sections{SB}$
and $\cUB$ are isomorphic as $R$-coalgebras via PBW isomorphism.
\begin{theorem}[\cite{MR4271478}]\label{thm:LAPBW}
Upon a choice of splitting $j:B\to L$ of exact sequence~\eqref{eq:LPSES} and a choice of
$L$-connection $\nabla^{B}$ on $B=L/A$ extending
the Bott $A$-connection, there exists a unique isomorphism of $R$-coalgebras
\[\underline{\pbw}=\underline{\pbw}^{\nabla^{B},j}:\sections{SB}\to \cUB\]
satisfying
\begin{gather}
\underline{\pbw}(f)=f, \quad \forall f\in \smooth{M} ; \\
\underline{\pbw}(b)=j(b), \quad \forall b\in \sections{B} ; \\
\underline{\pbw}(b_{1}\odot \cdots \odot b_{n})=\frac{1}{n}\sum_{i=1}^{n}
\big(j(b_{i})\centerdot \underline{\pbw}(\mathbf{b}^{\{i\}})
-\underline{\pbw}(\nabla^{B}_{j(b_{i})}\mathbf{b}^{\{i\}})\big)
\end{gather}
for all $b_{1},\cdots,b_{n}\in \sections{B}$, where the symbol $\centerdot$
on the RHS denotes the left $\cU L$-modules structure on $\cUB$
and the notation $\mathbf{b}^{\{i\}} \in \sections{SB}$ denotes
the symmetric tensors of $b_{1},\cdots,b_{n}$ except $b_{i}$.
\end{theorem}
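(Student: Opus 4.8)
Since the three displayed identities already amount to a complete recursive prescription for $\underline{\pbw}$ on $\sections{SB}=\bigoplus_{n\geq 0}\sections{S^{n}B}$ (degrees $0$ and $1$ being the first two identities, and degree $n$ being reduced to degrees $<n$ by the third), \emph{uniqueness is automatic} once existence is established. The content of the statement is therefore threefold: that this prescription yields a well-defined $R$-linear map on the symmetric algebra; that the resulting map is a morphism of $R$-coalgebras; and that it is bijective. The plan is to follow, essentially line for line, the proof of the Poincaré--Birkhoff--Witt theorem for graded Lie algebroids (Proposition~\ref{thm:GrPBW} and \cite{MR3910470}), of which Theorem~\ref{thm:LAPBW} is the ``relative'' counterpart: taking $A=0$, so that $B=L$, $j=\id_{L}$ and $\cUB=\cU(L)$, the recursion above specializes exactly to the defining recursion of $\pbw^{\nabla}$.

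First I would check that the prescription descends from $\sections{B}^{\otimes n}$ to $\sections{S^{n}B}$. Symmetry in $b_{1},\dots,b_{n}$ is built in, since one sums over all $i$, so it suffices to verify $R$-multilinearity, by induction on $n$. Expanding $\underline{\pbw}(fb_{1}\odot b_{2}\odot\cdots\odot b_{n})$ via the recursion: the term $i=1$ produces $f$ as an overall left factor, using that $j$ is $R$-linear and that $\nabla^{B}$ is $R$-linear in its $L$-slot; in each term $i\geq 2$ one moves $f$ to the left through $j(b_{i})\centerdot(-)$ at the cost of an anchor correction $(\rho_{L}(j(b_{i}))f)\,(-)$, and this correction is cancelled precisely by the Leibniz term produced by $\nabla^{B}_{j(b_{i})}$ acting on $fb_{1}\odot\cdots$, once $\nabla^{B}$ has been extended to $\sections{SB}$ as a derivation. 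Here one uses the defining relations of $\cU(L)$ --- equivalently, that $\cUB$ is a left $\cU(L)$-module satisfying $\xi\centerdot(f\centerdot u)=f\centerdot(\xi\centerdot u)+(\rho_{L}(\xi)f)\centerdot u$ for $\xi\in\sections{L}$ --- together with the Leibniz rule for $\nabla^{B}$. A second, entirely parallel induction shows that $\underline{\pbw}$ intertwines the shuffle comultiplication of $\sections{SB}$ with the descent to $\cUB$ of the comultiplication of $\cU(L)$: one uses that $\Delta$ is an algebra map on $\cU(L)$, that sections of $L$ are primitive, that the derivation extension of $\nabla^{B}_{\xi}$ to $\sections{SB}$ satisfies the co-Leibniz rule for the shuffle comultiplication, and the induction hypothesis to match the $\nabla^{B}$-correction terms on the two sides.

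It then remains to prove bijectivity, for which I would pass to associated graded objects. Equip $\cU(L)$ with its standard order filtration $\cU^{\leq n}(L)$; the PBW theorem for Lie algebroids gives $\operatorname{gr}\cU(L)\cong\sections{SL}$. Because $A\subset L$ is a subbundle, the left ideal $\cU(L)\cdot\sections{A}$ is compatible with the filtration and its associated graded is the ideal $\sections{SL}\cdot\sections{A}\subset\sections{SL}$; hence $\operatorname{gr}\cUB\cong\sections{SL}/(\sections{SL}\cdot\sections{A})\cong\sections{SB}$, the last isomorphism coming fiberwise from $B=L/A$. The map $\underline{\pbw}$ is filtered --- it sends $\sections{S^{\leq n}B}$ into $\cU^{\leq n}B$ --- and from the recursion one reads off that its leading symbol on $\sections{S^{n}B}$ sends $b_{1}\odot\cdots\odot b_{n}$ to the class of $j(b_{1})\cdots j(b_{n})$, which is exactly the isomorphism $\sections{S^{n}B}\xrightarrow{\sim}\operatorname{gr}_{n}\cUB$ just identified. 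Thus $\operatorname{gr}\underline{\pbw}$ is an isomorphism; as both filtrations are exhaustive and concentrated in non-negative degrees, $\underline{\pbw}$ is itself an isomorphism. Compatibility with the counits and with the left $R$-module structures is immediate, so $\underline{\pbw}$ is an isomorphism of $R$-coalgebras, and uniqueness holds as noted above.

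The step I expect to be most delicate is the well-definedness induction: tracking the anchor and Leibniz corrections and verifying their exact cancellation is where the relative setting departs visibly from the Lie-algebra case. I would also note, while carrying it out, that only the restriction of $\nabla^{B}$ along the subbundle $j(B)\subset L$ actually enters the recursion, so the hypothesis that $\nabla^{B}$ extend the Bott $A$-connection plays no role in well-definedness or in the coalgebra isomorphism itself --- it is a normalization that becomes relevant only later, when one identifies the differential and higher brackets transported through $\underline{\pbw}$ with Chevalley--Eilenberg data of the Lie pair.
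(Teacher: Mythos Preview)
The paper does not supply its own proof of this statement: Theorem~\ref{thm:LAPBW} is quoted from \cite{MR4271478} (with the analogous graded Lie algebroid version credited to \cite{MR3910470}), and no argument is given in the text. So there is nothing in the paper to compare your proof against.

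That said, your proposal is correct and is exactly the standard argument one finds in those references. The well-definedness induction works as you describe: the anchor correction produced by commuting $f$ past $j(b_i)\centerdot(-)$ in the left $\cU(L)$-module $\cUB$ cancels against the Leibniz term coming from the derivation extension of $\nabla^{B}_{j(b_i)}$ to $\sections{SB}$, and the coalgebra-morphism induction runs on the same mechanism together with primitivity of $\sections{L}$ in $\cU(L)$. The associated-graded argument for bijectivity is also the expected one; the identification $\operatorname{gr}\cUB\cong\sections{SB}$ is canonical (no splitting needed at that step), and the leading symbol of the recursion is visibly the symmetrization $b_1\odot\cdots\odot b_n\mapsto\overline{j(b_1)\cdots j(b_n)}$. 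Your closing remark is accurate and worth retaining: only $\nabla^{B}|_{j(B)}$ enters the recursion, so the hypothesis that $\nabla^{B}$ extend the Bott $A$-connection plays no role in the isomorphism itself --- it becomes relevant only afterwards, when one identifies the transferred differential $d^{\lightning}$ on $\Omega_A(SB)$ with Chevalley--Eilenberg data for the Lie pair.
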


Note that $\Omega_{A}^{\bullet}$ is a graded $R$-algebra
and that $R$-coalgebra structures on $\sections{SB}$
and $\cUB$ extend to graded $\Omega_{A}$-coalgebra
structures on $\Omega_{A}(SB)$ and $\Omega_{A}\tensor_{R}\cUB$, respectively,
by tensoring $\Omega_{A}$
over $R$. Clearly, the above PBW isomorphism naturally
extends to
\begin{equation}
\label{eq:PBWliepair}
\underline{\pbw}:\Omega_{A}^{\bullet}(B) \to \Omega_{A}^{\bullet}\tensor_{R}\cUB,
\end{equation}
an isomorphism of graded $\Omega_{A}$-coalgebras.

Consider the flat $A$-connection $\nabla^{\lightning}:\sections{A}\times\sections{SB}
\to \sections{SB}$ on $SB$, the bundle of symmetric powers of $B$, defined by
\[ \nabla^{\lightning}_{a}\mathbf{b} = (\underline{\pbw})^{-1}
\big(a\centerdot \underline{\pbw}(\mathbf{b})\big),\qquad
\forall a\in\sections{A}\subset\sections{L}, \quad \forall\mathbf{b}\in\sections{SB} ,\]
where the symbol $\centerdot$ denotes the left $\cU (L)$-module
structure on $\cUB$. The corresponding covariant exterior derivative is a map of degree $1$
\begin{equation}\label{eq:d-lightning}
d^{\lightning}:\Omega^{\bullet}_{A}(SB)\to \Omega^{\bullet+1}_{A}(SB)
\end{equation}
satisfying $(d^{\lightning})^{2}=0$.
The restriction of $d^{\lightning}$ to $\Omega_{A}(B)$ is
the composition $d^{\lightning}|_{\Omega_{A}(B)}=\inc\circ d^{\Bott}$
of the natural inclusion $\inc:\Omega_{A}(B)\into \Omega_{A}(SB)$ with
the Chevalley--Eilenberg differential
$d^{\Bott}:\Omega^{\bullet}_{A}(B)\to \Omega^{\bullet+1}_{A}(B)$
induced by the Bott $A$-connection on $B$.
It is simple to see that the pair $(\Omega_{A}(SB),d^{\lightning})$ is
indeed a DG $\rR$-coalgebra.
Furthermore, we have $d^{\lightning}(1_{\Omega_{A}} )=0$.

Note that we have a natural identification
\[\Omega_{A}(SB)\cong S_{\Omega_{A}}\Omega_{A}(B)\]
as $\Omega_{A}$-coalgebras.
Thus, according to
Proposition~\ref{prop:CatEmb}, the DG $\rR$-coalgebra
$(\Omega_{A}(SB),d^{\lightning})$ induces an $L_{\infty}[1]$ $\rR$-algebra
$(\Omega_{A}(B),d^{\Bott},\widetilde{R}_{2},\widetilde{R}_{3},\ldots)$, where
\[ \widetilde{R}_{n}\in \Hom^{1}_{\Omega_{A}}(\Omega_{A}(S^{n}B),
\Omega_{A}(B))\cong\Omega_{A}^{1}(S^{n}B^{\vee}\tensor B) \]
for $n\geq 2$. Assume that the $L$-connection $\nabla^{B}$ on $B$
in Theorem~\ref{thm:LAPBW} is torsion-free, i.e.\
\[ \nabla_{l_{1}}q(l_{2}) - \nabla_{l_{2}}q(l_{1})= q(\lie{l_{1}}{l_{2}}) \]
for all $l_{1},l_{2}\in\sections{L}$.
Then $\widetilde{R}_{2}\in
\Omega_{A}^{1}(S^{2}B^{\vee}\tensor B)$
is written explicitly as
\[ \widetilde{R}_{2}(a; b_{1}\odot b_{2})=\nabla^{B}_{a}\nabla^{B}_{j(b_{1})}b_{2}
-\nabla^{B}_{j(b_{1})}\nabla^{B}_{a}b_{2} - \nabla^{B}_{\lie{a}{j(b_{1})}_{L}}b_{2} \]
for all $a\in\sections{A}$ and $b_{1},b_{2}\in\sections{B}$.
Then $\widetilde{R}_{2}$ is a $1$-cocycle, in
the cochain complex $(\Omega_{A}^{\bullet}(S^{2}B^{\vee}\tensor B),d^{\Bott})$,
called the Atiyah 1-cocycle associated with $\nabla^{B}$.
Its cohomology class
\[ \alpha_{(L,A)}=[\widetilde{R}_{2}] \in H_{\CE}^{1}(A; S^{2}B^{\vee}\tensor B) \]
does not depend on the choice of $\nabla^{B}$ and $j$, and is called the Atiyah class
of the Lie pair $(L,A)$ --- see~\cite{MR4271478} for details.

Thus we have recovered the following:

\begin{theorem}[\cite{MR4271478}]\label{thm:LAKapranov}
Let $(L,A)$ be a Lie pair and $B=L/A$ be the quotient bundle.
A torsion-free $L$-connection $\nabla^{B}$ on $B$ extending
the Bott $A$-connection and a splitting $j:B\to L$ of \eqref{eq:LPSES} determine an
$L_{\infty}[1]$ $\rR$-algebra $(\Omega_{A}(B),d^{\Bott},
\widetilde{R}_{2},\widetilde{R}_{3},\ldots)$
where $\rR$ is the \DGCA{} $(\Omega_{A}^{\bullet}, d_{\CE})$
such that
\begin{enumerate}
\item its unary bracket
\[d^{\Bott}:\Omega^{\bullet}_{A}(B)\to \Omega^{\bullet+1}_{A}(B)\]
is the Chevalley--Eilenberg
differential induced by the Bott $A$-connection on $B$.
\item its binary bracket
\[\widetilde{R}_{2}: \Omega^{\bullet}_{A}(S^{2}B) \to \Omega^{\bullet+1}_{A}(B)\]
is the Atiyah 1-cocycle associated with $\nabla^{B}$ and $j$.
\end{enumerate}
Furthermore, the $L_{\infty}[1]$ $\rR$-algebra structure on $\Omega_{A}(B)$
arising from different choices of $\nabla^{B}$ and $j:B\to L$
are all isomorphic.
\end{theorem}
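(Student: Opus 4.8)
The plan is to obtain the statement from Proposition~\ref{prop:CatEmb} applied to the DG $\rR$-coalgebra $(\Omega_{A}(SB),d^{\lightning})$. As observed above, $(\Omega_{A}(SB),d^{\lightning})$ is a DG $\rR$-coalgebra with $d^{\lightning}(1_{\Omega_{A}})=0$, and $\Omega_{A}(SB)\cong S_{\Omega_{A}}\Omega_{A}(B)$ as graded $\Omega_{A}$-coalgebras; so Proposition~\ref{prop:CatEmb} at once yields an $L_{\infty}[1]$ $\rR$-algebra $(\Omega_{A}(B),d^{\Bott},\widetilde{R}_{2},\widetilde{R}_{3},\ldots)$ whose $n$-ary bracket $\widetilde{R}_{n}\in\Omega_{A}^{1}(S^{n}B^{\vee}\tensor B)$ is the $S^{n}B\to B$ component of the coderivation $d^{\lightning}$, i.e.\ the composite $\Omega_{A}(S^{n}B)\hookrightarrow\Omega_{A}(SB)\xrightarrow{d^{\lightning}}\Omega_{A}(SB)\twoheadrightarrow\Omega_{A}(B)$. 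It then remains to identify the unary and binary brackets and to prove independence of the auxiliary data $(\nabla^{B},j)$.

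The unary bracket needs no computation: by construction $d^{\lightning}$ restricts on $\Omega_{A}(S^{1}B)=\Omega_{A}(B)$ to $\inc\circ d^{\Bott}$, so projecting onto $\Omega_{A}(B)$ returns $d^{\Bott}$, which squares to zero because the Bott $A$-connection is flat; this proves assertion~(1).

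The computational heart of the argument --- and the step I expect to be the main obstacle, even though in the Lie-pair case it carries no Koszul signs --- is the identification of $\widetilde{R}_{2}$. Since $d^{\lightning}$ is the covariant exterior derivative of the flat $A$-connection $\nabla^{\lightning}$ on $SB$, for $a\in\sections{A}$ and $b_{1},b_{2}\in\sections{B}$ the element $\widetilde{R}_{2}(a;b_{1}\odot b_{2})$ is the $\sections{B}$-component of $\nabla^{\lightning}_{a}(b_{1}\odot b_{2})=(\underline{\pbw})^{-1}\big(a\centerdot\underline{\pbw}(b_{1}\odot b_{2})\big)$. Here I would expand $\underline{\pbw}(b_{1}\odot b_{2})$ via the $n=2$ instance of the recursion of Theorem~\ref{thm:LAPBW}, namely $\underline{\pbw}(b_{1}\odot b_{2})=\tfrac{1}{2}\big(j(b_{1})\centerdot j(b_{2})+j(b_{2})\centerdot j(b_{1})-\underline{\pbw}(\nabla^{B}_{j(b_{1})}b_{2})-\underline{\pbw}(\nabla^{B}_{j(b_{2})}b_{1})\big)$, then move $a$ to the right through each factor $j(b_{i})$ using $a\centerdot j(b)=j(b)\centerdot a+\lie{a}{j(b)}_{L}$ in $\cU(L)$ and discard every resulting term ending in a section of $A$ (these vanish in $\cUB$), and finally apply $(\underline{\pbw})^{-1}$ and project onto $\sections{B}$. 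Using torsion-freeness of $\nabla^{B}$ (so that $q\,\lie{a}{j(b)}_{L}=\nabla^{B}_{a}b$ and $q\,\lie{j(b_{1})}{j(b_{2})}_{L}=\nabla^{B}_{j(b_{1})}b_{2}-\nabla^{B}_{j(b_{2})}b_{1}$, where $q\colon L\to B$ is the projection), this computation identifies the $\sections{B}$-component with
\[ \widetilde{R}_{2}(a;b_{1}\odot b_{2})=\nabla^{B}_{a}\nabla^{B}_{j(b_{1})}b_{2}-\nabla^{B}_{j(b_{1})}\nabla^{B}_{a}b_{2}-\nabla^{B}_{\lie{a}{j(b_{1})}_{L}}b_{2}, \]
which is symmetric in $b_{1},b_{2}$ precisely because $\nabla^{B}$ is torsion-free and is the Atiyah $1$-cocycle of $(L,A)$ attached to $\nabla^{B}$ and $j$; in particular it is a $d^{\Bott}$-cocycle with class $\alpha_{(L,A)}$, giving assertion~(2). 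The higher $\widetilde{R}_{n}$ emerge from the same mechanism and need not be made explicit.

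For independence of the choices I would argue as follows. The left $\cU(A)$-action on $\cUB$ makes $\cUB$ a representation of the Lie algebroid $A$, and the Chevalley--Eilenberg differential of this representation on $\Omega_{A}^{\bullet}\tensor_{R}\cUB$ depends only on the Lie pair $(L,A)$; moreover, by the defining property $\nabla^{\lightning}_{a}=(\underline{\pbw})^{-1}\circ(a\centerdot-)\circ\underline{\pbw}$, the extended PBW isomorphism~\eqref{eq:PBWliepair} conjugates this intrinsic differential onto $d^{\lightning}$. Hence, given two choices $(\nabla^{B}_{1},j_{1})$ and $(\nabla^{B}_{2},j_{2})$, the composite $F:=(\underline{\pbw}_{1})^{-1}\circ\underline{\pbw}_{2}$ is an isomorphism of DG $\rR$-coalgebras $(\Omega_{A}(SB),d^{\lightning}_{2})\to(\Omega_{A}(SB),d^{\lightning}_{1})$, hence by Proposition~\ref{prop:CatEmb} an isomorphism between the two $L_{\infty}[1]$ $\rR$-algebra structures on $\Omega_{A}(B)$; and since $j_{2}(b)-j_{1}(b)\in\sections{A}$ vanishes in $\cUB$ while $\underline{\pbw}_{i}(b)=j_{i}(b)$, its linear component is in fact $\id_{\Omega_{A}(B)}$. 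This would complete the proof.
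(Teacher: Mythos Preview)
Your proposal is correct and follows essentially the same approach as the paper: the paper's argument is precisely the discussion preceding the theorem statement, which applies Proposition~\ref{prop:CatEmb} to the DG $\rR$-coalgebra $(\Omega_{A}(SB),d^{\lightning})$, identifies the unary bracket as $d^{\Bott}$ and the binary bracket as the Atiyah $1$-cocycle, and then states the theorem as a recovery of the result from~\cite{MR4271478}. Your independence argument via $F=(\underline{\pbw}_{1})^{-1}\circ\underline{\pbw}_{2}$ is the exact analogue of the argument in the proof of Theorem~\ref{prop:QSRN}, and the paper leaves this step implicit (deferring to~\cite{MR4271478}); your added observation that the linear component is $\id_{\Omega_{A}(B)}$ is a nice bonus.
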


The following theorem is the main result of this section.

\begin{theorem}\label{thm:LPHA}
Let $(L,A)$ be a Lie pair, and let $(\pi^{!}L,\liederivative{s})$
be its corresponding DG Lie algebroid.
Then the Kapranov $L_{\infty}[1]$ $\rR$-algebra
$\sections{\pi^{!}L}$ of the DG Lie algebroid
$(\pi^{!}L, \liederivative{s})$,
as in Theorem~\ref{thm:Liepair1},
and the Kapranov $L_{\infty}[1]$ $\rR$-algebra
$\Omega_A(B)$ of the Lie pair $(L, A)$,
as in Theorem~\ref{thm:Liepair2}, are quasi-isomorphic.
\end{theorem}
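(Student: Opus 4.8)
The two $L_\infty[1]$ $\rR$-algebras in question both have $\rR = (\Omega_A^\bullet, d_{\CE})$ as their base \DGCA, and both are built (via Proposition~\ref{prop:CatEmb}) from DG $\rR$-coalgebras: the first from $\big(S_{\Omega_A}(\sections{\pi^!L}), \QU^{\pbw}\big)$ after transporting $\QU$ through the DG Lie algebroid PBW map of Theorem~\ref{thm:GrPBW}; the second from $\big(\Omega_A(SB), d^\lightning\big) \cong \big(S_{\Omega_A}(\Omega_A(B)), d^\lightning\big)$, transported from $\Omega_A \otimes_R \cUB$ through the Lie pair PBW map $\underline{\pbw}$ of Theorem~\ref{thm:LAPBW}. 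The strategy is therefore to exhibit a direct comparison at the level of universal enveloping algebras: construct a surjective morphism of DG $\rR$-coalgebras $\Psi : (\UcL, \QU) \to (\Omega_A \otimes_R \cUB, \mathrm{id}\otimes\text{(left }\cU L\text{-action of }\QU\text{)})$, where $\cL = \pi^!L$, and show it is a quasi-isomorphism on the underlying complexes. Conjugating $\Psi$ by the two PBW maps then yields a morphism of DG $\rR$-coalgebras between $\big(\ScL, \QU^{\pbw}\big)$ and $\big(\Omega_A(SB), d^\lightning\big)$, i.e.\ (by Proposition~\ref{prop:CatEmb}) an $L_\infty[1]$ $\rR$-morphism $\sections{\pi^!L} \to \Omega_A(B)$, and one checks its linear component is a quasi-isomorphism.

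\textbf{Key steps.} First I would identify the universal enveloping algebra $\cU(\pi^!L)$ of the pull-back Lie algebroid concretely. Since $\pi^!L = TA[1] \times_{TM} L$ sits over $A[1]$ with function ring $\Omega_A^\bullet$, and its sections are pairs $(X, \sum f_i \otimes \xi_i)$ compatible with anchors, there is a natural map $\sections{L} \to \sections{\pi^!L}$ (sending $\xi \mapsto (\text{lift}, 1\otimes\xi)$ modulo the compatibility data) and, more importantly, the PBW map $\pbw^\nabla : S_{\Omega_A}(\sections{\pi^!L}) \to \cU(\pi^!L)$ identifies $\cU(\pi^!L) \cong \Omega_A \otimes_R \cU(L) \otimes_R S(\text{vertical part})$ after a choice of connection adapted to the splitting $A[1]\to M$; the "vertical" directions of $\pi^!L$ over the fiber are precisely $A$ itself (shifted), and quotienting those out should produce $\Omega_A \otimes_R \cUB$. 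Second, I would define $\Psi$ on generators: on $\Omega_A^\bullet \subset \cU(\pi^!L)$ it is the identity, and on the $L$-part it is the canonical map to $\cUB = \cU(L)/(\cU(L)\cdot\sections{A})$ tensored up to $\Omega_A$; one must verify $\Psi$ kills the two-sided ideal generated by the vertical/$A$-directions of $\pi^!L$ and descends. Third, I would check $\Psi$ is a morphism of coalgebras (both comultiplications are deconcatenation/shuffle, and the ideal is a coideal because $\Delta(\cU(L)\cdot\sections{A})$ lands in $\cU(L)\cdot\sections{A}\otimes\cU(L) + \cU(L)\otimes\cU(L)\cdot\sections{A}$, as recalled in the excerpt) and compatible with $\QU$. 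Fourth, the quasi-isomorphism statement: the kernel of $\Psi$, filtered by symmetric degree, has associated graded an acyclic Koszul-type complex $S(A[1]) \otimes (\text{stuff})$ with differential essentially $d_{\CE}$ paired against the tautological section $s = (d_{\CE}, \iota)$ — indeed $\iota : A[1]\to A\subset L$ is exactly the map whose vertical component generates the kernel, so the relevant sub-quotient complex is the Koszul complex of $\Omega_A = \sections{\Lambda^\bullet A^\vee}$ against $A[1]$, which is acyclic in positive degrees. Fifth, conjugate by both PBW maps, invoke Proposition~\ref{prop:CatEmb} to read off the $L_\infty[1]$ $\rR$-morphism, and note its linear part is the induced map on $S^1$, which is the quasi-isomorphism from step four restricted to generators.

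\textbf{Main obstacle.} The hard part will be step four — establishing that $\Psi$ is a quasi-isomorphism — together with the bookkeeping in step one needed to make the comparison precise. One must choose the $\cL$-connection $\nabla$ on $\pi^!L$ and the $L$-connection $\nabla^B$ on $B$ compatibly (so that $\pbw^\nabla$ and $\underline{\pbw}$ are genuinely intertwined by $\Psi$, not merely intertwined up to homotopy), which requires unpacking how a splitting $j : B\to L$ and an extension of the Bott connection lift to data on the pull-back algebroid over $A[1]$; the tautological section $s$ in \eqref{eq:sphi} is the key gadget that makes the vertical directions exact. I expect the cleanest route is to first prove the statement at the level of the underlying complexes (filtration/associated-graded Koszul argument, using that $TA[1]$ is, fiberwise over $M$, a free module and that $\Omega_A$ is a Koszul-acyclic resolution in the relevant sense), and only afterwards upgrade to coalgebras and then to $L_\infty[1]$ $\rR$-algebras via the already-established dictionary. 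The algebraic manipulations (checking $\Psi$ respects the ideal, the comultiplication, and $\QU$) are routine but sign-heavy; I would not grind through them here.
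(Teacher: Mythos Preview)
Your overall architecture matches the paper's proof: build a morphism of DG $\rR$-coalgebras $\Psi:\cU(\pi^!L)\to\Omega_A\otimes_R\cUB$ (the paper calls it $P_\cU$, defined on products of sections of the special type $(Y,v\circ\pi)$ by sending them to $\overline{v_1\cdots v_n}$), conjugate by the two PBW isomorphisms, and read off the $L_\infty[1]$ $\rR$-morphism via Proposition~\ref{prop:CatEmb}. So the strategy is correct.

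Two places where you are making your life harder than necessary. First, your ``main obstacle'' --- choosing $\nabla$ and $\nabla^B$ compatibly so that the PBW maps are genuinely intertwined by $\Psi$ --- is not actually needed. The composite $\underline{\pbw}^{-1}\circ\Psi\circ\pbw^\nabla$ is automatically a morphism of DG $\rR$-coalgebras for \emph{any} choices of connections, since each factor is; the only thing you have to check is that its linear part is a quasi-isomorphism of DG $\rR$-modules. Second, and relatedly, you propose to prove quasi-isomorphism at the level of $\cU(\pi^!L)$ via a Koszul/filtration argument on $\ker\Psi$. The paper instead observes that because both PBW maps restrict to the identity in symmetric degree~$\le 1$, the linear part of the conjugated morphism is just the obvious projection $P_0:\sections{\pi^!L}\to\Omega_A(B)$, $(X,\sum f_i\otimes\xi_i)\mapsto\sum f_i\otimes q(\xi_i)$, and then cites \cite{StienonVitaglianoXu} for the fact that $P_0$ is (the projection of) a contraction of $(\sections{\pi^!L},\liederivative{s})$ onto $(\Omega_A(B),d^{\Bott})$. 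Your Koszul argument would establish this too, but working at the linear level from the start is considerably lighter than analyzing the full kernel of $\Psi$.
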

\begin{proof}

Introduce an $\Omega_A$-linear operator
\[ P_0: \sections{\pi^! L} \to \Omega_{A}(B) \]
by the relation
\[ P_0 (X,\sum_{i} f_{i}\tensor \xi_{i})=\sum_{i} f_{i}\otimes
q(\xi_{i}) \]
for any element
$(X,\sum_{i} f_{i}\tensor \xi_{i})\in
\XX(A[1])\times \Omega_{A}(L)$
satisfying $\tangent{\pi}(X)=\sum_{i}f_{i}\cdot
\rho_{L}(\xi_{i})$.
According to~\cite[Theorem~3.1]{StienonVitaglianoXu},
$P_0$ is the projection of a contraction from
$(\sections{\pi^{!}L}, \liederivative{s})$ to
$(\Omega_{A}(B), d^{\Bott})$,
hence is a quasi-isomorphism.

Following \cite[Lemma~4.5]{StienonVitaglianoXu},
let
\[ P_\cU : \cU(\pi^! L)
\to \Omega_{A}\tensor_{R}\cUB \]
be the $\Omega_A$-linear map defined by
\begin{equation}\label{eq:P_U}
P_\cU\big((Y_1,v_1\circ\pi) \circc\cdots\circc (Y_n,v_n\circ\pi)\big)
=\class{v_1 \circc \cdots \circc v_n}
\end{equation}
for all pairs of the special type $(Y_1,v_1\circ\pi),\cdots,(Y_n,v_n\circ\pi)$
characterizing sections of the Lie algebroid $\pi^! L\to A[1]$.
That is, $Y_i\in \XX (A[1])$ and $v_i\in \sections{L}$ such
that $T\pi (Y_i)=\rho_L (v_i)$, $\forall i=1, \cdots, n$.
The notation $\overline{u}$ refers to the equivalence class
in $\cUB$ of an element $u\in\cU(L)$.
From~\cite[Lemma~4.5]{StienonVitaglianoXu},
it follows that $P_\cU$ is a well-defined
morphism of DG modules
\[ P_\cU : \big(\cU(\pi^! L), \QU\big)\to
\big(\Omega_{A}\tensor_{R}\cUB, d_A\big) \]
over the \DGCA{} $(\Omega_A, d_{\CE})$.
Here, $d_{A}$ is the Chevalley--Eilenberg differential
associated with the left $A$-module structure on $\cUB$, induced by left multiplication.
Moreover, from Equation~\eqref{eq:P_U},
it is simple to see that $P_\cU$ respects the $\Omega_A$-coalgebra
structures. Therefore,
$P_\cU$ is indeed a morphism of
DG $\rR$-coalgebras, where $\rR$ is the \DGCA{} $(\Omega_A, d_{\CE})$.

Finally, note that both
$\pbw^{\nabla}:\sections{S \pi^{!}L} \to \cU(\pi^{!}L)$ as in ~\eqref{eq:PBW}
and
$\underline{\pbw}:\Omega_{A}(SB) \to \Omega_{A}\tensor_{R}\cUB$
as in~\eqref{eq:PBWliepair} are
isomorphism of graded $\Omega_A$-coalgebras.
It thus follows that
\begin{equation}
\label{eq:Garedel'Est}
P_{\cU}^{\pbw}:=\underline{\pbw}\circ P_\cU \circ (\pbw^{\nabla})^{-1}:
\big(\sections{S \pi^{!}L} , \QU^{\pbw}\big)\to
\big(\Omega^{\bullet}_{A}(SB), d^{\lightning}\big)
\end{equation}
is a morphism a DG $\rR$-coalgebras. Moreover,
it is simple to see, by the construction, that
we have the following commutative diagram of cochain complexes:

\begin{equation}
\label{diag:coalgebras}
\begin{tikzcd}
(\sections{\pi^{!}L}, \QL) \arrow{rr}{P_0} \arrow{d}{} && (\Omega_{A}(B), d^{\Bott}) \arrow{d}{} \\
\big(\sections{S \pi^{!}L} , \QU^{\pbw}\big) \arrow{rr}{P_{\cU}^{\pbw}}
&& \big(\Omega^{\bullet}_{A}(SB), d^{\lightning}\big)
\end{tikzcd}
\end{equation}
where the vertical arrows are natural inclusions.
Since $P_0$ is a quasi-isomorphism, it follows that
the DG $\rR$-coalgebra morphism~\eqref{eq:Garedel'Est} indeed
induces a quasi-isomorphism between
Kapranov $L_{\infty}[1]$ $\rR$-algebra
of the DG Lie algebroid
$(\cL,\QL)=(\pi^{!}L,\liederivative{s})$
as in Theorem~\ref{thm:Liepair1},
and the Kapranov $L_{\infty}[1]$ $\rR$-algebra
of the Lie pair $(L, A)$ as in Theorem~\ref{thm:Liepair2}.
This concludes the proof.
\end{proof}

According to Liao \cite[Theorem~3.1]{MR4665716},
there is an isomorphism
\begin{equation}
\Phi: H^{1}(\sections{S^2(\pi^{!}L)^{\vee}\tensor \pi^{!}L}, \liederivative{s})
\stackrel{\sim}{\to}
H_{\CE}^{1}(A; S^{2}B^{\vee}\tensor B)
\end{equation}
such that $\Phi \big(\alpha_{(\pi^{!}L, \liederivative{s})}\big)
=\alpha_{(L,A)}$.

Combining with Theorem~\ref{thm:LAHA}, we immediately have the following

\begin{corollary}\label{cor:LPHA}
Let $(L,A)$ be a Lie pair, $B=L/A$ the quotient bundle, and
$\rR$ the \DGCA{} $(\Omega_{A}^{\bullet}, d_{\CE})$.
\begin{enumerate}
\item The Kapranov $L_{\infty}[1]$ $\rR$-algebra on
$\Omega_{A}(B)$ as in Theorem~\ref{thm:LAKapranov}
is linearizable if and only if its Atiyah class
$\alpha_{(L,A)}$ vanishes.
\item On the other hand, the induced Kapranov
$L_{\infty}[1]$ $\KK$-algebra on $\Omega_{A}(B)$
is necessarily homotopy abelian.
\end{enumerate}
\end{corollary}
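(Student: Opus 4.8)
The plan is to deduce both statements from the corresponding facts for the DG Lie algebroid $(\pi^{!}L,\liederivative{s})$ --- namely Theorem~\ref{thm:DGPBW} and Theorem~\ref{thm:LAHA} --- by transporting them across the quasi-isomorphism of Theorem~\ref{thm:LPHA}. Throughout I use that the isomorphism $\Phi$ of Liao, with $\Phi\big(\alpha_{(\pi^{!}L,\liederivative{s})}\big)=\alpha_{(L,A)}$, makes the statement $\alpha_{(L,A)}=0$ equivalent to $\alpha_{(\pi^{!}L,\liederivative{s})}=0$, and I write $\widetilde{R}_{2}\in\Omega_{A}^{1}(S^{2}B^{\vee}\otimes B)$ for the binary bracket of the Kapranov $L_{\infty}[1]$ $\rR$-algebra on $\Omega_{A}(B)$, a $1$-cocycle representing $\alpha_{(L,A)}$ by Theorem~\ref{thm:LAKapranov}. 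The ``only if'' part of~(1) is then immediate from Lemma~\ref{lem:Sock}: if $\Omega_{A}(B)$ is $\rR$-linearizable then $[\widetilde{R}_{2}]=0$ in $H^{1}\big(\nst{\LL^{\vee}}{\grR}{2}\otimes_{\grR}\LL,d^{\Bott}\big)$ with $\LL=\Omega_{A}(B)$ and $\grR=\Omega_{A}^{\bullet}$, and identifying this complex with $\big(\Omega_{A}^{\bullet}(S^{2}B^{\vee}\otimes B),d^{\Bott}\big)$ this reads exactly $\alpha_{(L,A)}=0$.

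For the ``if'' part of~(1), assume $\alpha_{(L,A)}=0$, hence $\alpha_{(\pi^{!}L,\liederivative{s})}=0$. By Theorem~\ref{thm:DGPBW} there is then a $\pi^{!}L$-connection $\nabla_{0}$ whose PBW map $\pbw^{\nabla_{0}}$ intertwines $\QS$ and $\QU$, so $\QU^{\pbw}=(\pbw^{\nabla_{0}})^{-1}\QU\,\pbw^{\nabla_{0}}=\QS$ and the Kapranov $L_{\infty}[1]$ $\rR$-algebra on $\sections{\pi^{!}L}$ attached to $\nabla_{0}$ is the abelian one $(\sections{\pi^{!}L},\liederivative{s},0,0,\ldots)$. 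From the proof of Theorem~\ref{thm:LPHA}, the linear part of the quasi-isomorphism there is $P_{0}$, the projection of an $\Omega_{A}$-linear contraction $(f,P_{0},h)$ of $(\sections{\pi^{!}L},\liederivative{s})$ onto $(\Omega_{A}(B),d^{\Bott})$. Applying the homotopy transfer theorem (Theorem~\ref{thm:transfer}, over $\rR=(\Omega_{A}^{\bullet},d_{\CE})$) to this contraction and to the abelian structure above, the recursive formulas immediately give that the transferred structure on $\Omega_{A}(B)$ is again abelian and that the accompanying $L_{\infty}[1]$ $\rR$-quasi-isomorphism $\Omega_{A}(B)\to\sections{\pi^{!}L}$ reduces to $F=(f,0,0,\ldots)\colon(\Omega_{A}(B),d^{\Bott},0,\ldots)\to(\sections{\pi^{!}L},\liederivative{s},0,\ldots)$. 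On the other hand, running Theorem~\ref{thm:LPHA} with the connection $\nabla_{0}$ on $\pi^{!}L$ and any admissible data on $B$ supplies an $L_{\infty}[1]$ $\rR$-morphism $\Psi:=P_{\cU}^{\pbw}\colon(\sections{\pi^{!}L},\liederivative{s},0,\ldots)\to(\Omega_{A}(B),d^{\Bott},\widetilde{R}_{2},\ldots)$ with linear part $P_{0}$. Then $\Psi\circ F$ is an $L_{\infty}[1]$ $\rR$-morphism from the Kapranov $\Omega_{A}(B)$ to the abelian $\Omega_{A}(B)$ whose linear component is $P_{0}\circ f=\id$, hence an $\rR$-isomorphism; by the independence-of-choices clause of Theorem~\ref{thm:LAKapranov} the Kapranov $L_{\infty}[1]$ $\rR$-algebra $\Omega_{A}(B)$ is therefore $\rR$-linearizable. (Alternatively one could mirror Lemma~\ref{lem:CNabla} for the Lie-pair PBW map of Theorem~\ref{thm:LAPBW}, showing that $\widetilde{R}_{2}=0$ forces $\underline{\pbw}$ to intertwine $d^{\lightning}$ with the differential on $\Omega_{A}\otimes_{R}\cUB$; I prefer the route above since it reuses Theorem~\ref{thm:LPHA}.)

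For~(2), by Theorem~\ref{thm:LAHA} the $L_{\infty}[1]$ $\KK$-algebra underlying the Kapranov $L_{\infty}[1]$ $\rR$-algebra of $(\pi^{!}L,\liederivative{s})$ is homotopy abelian, and by Theorem~\ref{thm:LPHA} together with Corollary~\ref{cor:forget}(2) it is $L_{\infty}[1]$ $\KK$-quasi-isomorphic to the $L_{\infty}[1]$ $\KK$-algebra underlying $\Omega_{A}(B)$. Since over a field of characteristic zero $L_{\infty}[1]$-quasi-isomorphism generates an equivalence relation preserving homotopy abelianness, the latter is homotopy abelian as well, which is~(2).

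The only genuinely delicate point is the ``if'' part of~(1): as the paper stresses, $\rR$-linearizability is \emph{not} invariant under $L_{\infty}[1]$ $\rR$-quasi-isomorphism (only under $\rR$-isomorphism), so it cannot simply be carried along the quasi-isomorphism of Theorem~\ref{thm:LPHA}. What rescues the argument is that this quasi-isomorphism is built from a bona fide $\Omega_{A}$-linear contraction, so that composing its $L_{\infty}[1]$ $\rR$-lift with the homotopy-transfer quasi-inverse of the abelian model produces a morphism with identity linear component --- an honest $\rR$-isomorphism. The two verifications needed there are routine: that Theorem~\ref{thm:LPHA} may be run with the distinguished connection $\nabla_{0}$ furnished by Theorem~\ref{thm:DGPBW} (its construction accepts an arbitrary $\pi^{!}L$-connection as input), and that homotopy transfer of a strictly abelian structure along any contraction is again strictly abelian (clear from the recursions in Theorem~\ref{thm:transfer} once all higher source brackets vanish).
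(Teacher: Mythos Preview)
Your argument is correct and follows the route the paper intends: the paper offers no detailed proof beyond ``combining with Theorem~\ref{thm:LAHA}'' together with Liao's identification $\Phi(\alpha_{(\pi^!L,\liederivative{s})})=\alpha_{(L,A)}$, so you have essentially filled in what the authors leave implicit. In particular, you are right to flag that the ``if'' direction of~(1) does \emph{not} follow formally from Theorem~\ref{thm:DGPBW} plus Theorem~\ref{thm:LPHA} alone, since $\rR$-linearizability is not invariant under $\rR$-quasi-isomorphism; your use of the $\Omega_A$-linear contraction underlying $P_0$ together with homotopy transfer to manufacture a morphism with identity linear part is exactly the missing step, and it is the natural way to extract it from the paper's setup.

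One small slip: the composite $\Psi\circ F$ you build goes from the \emph{abelian} $\Omega_A(B)$ to the \emph{Kapranov} $\Omega_A(B)$, not the other way around (since $F\colon\Omega_A(B)_{\mathrm{ab}}\to\sections{\pi^!L}_{\mathrm{ab}}$ and $\Psi\colon\sections{\pi^!L}_{\mathrm{ab}}\to\Omega_A(B)_{\mathrm{Kap}}$). This does not affect the conclusion, as its linear part is $P_0\circ f=\id$ and hence it is an $\rR$-isomorphism whose inverse linearizes the Kapranov structure. Your alternative suggestion --- mirroring Lemma~\ref{lem:CNabla} directly for the Lie-pair PBW map $\underline{\pbw}$ --- would also work and is arguably more self-contained, but the route you chose is the one consistent with the paper's logic.
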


Consider the Lie pair
$(L,A):=(\tangentcc{X},\tangentzo{X})$
corresponding to a complex manifold $X$.
In this case, the pull-back DG Lie algebroid
$\pi^{!}L\to A[1]$
is exactly the tangent DG Lie algebroid
$\big(T(\tangentzo{X}[1]),
\liederivative{\bar{\partial}}\big)$
of the DG manifold $(\tangentzo{X}[1],\bar{\partial})$.
It is known that the Atiyah class of the Lie pair
coincides with the ordinary Atiyah class of the
holomorphic tangent bundle $T_X$ (see~\cite{MR3439229}).
Thus we have the following:

\begin{corollary}
\label{cor:complexmanifold}
Let $X$ be a complex manifold, and let $\rR$ be the \DGCA{}
$(\Omega_{X}^{0,\bullet}(\tangentoz{X}),\bar{\partial})$.
\begin{enumerate}
\item
The space of vector fields $\XX(\tangentzo{X}[1])$
admits a Kapranov $L_{\infty}[1]$ $\rR$-algebra structure
in which the unary bracket is $\liederivative{\bar{\partial}}$
and the binary bracket is a cocycle representative of the Atiyah class
of the tangent DG Lie algebroid
$\big(T(\tangentzo{X}[1]),\liederivative{\bar{\partial}}\big)$
of the DG manifold $(\tangentzo{X}[1],\bar{\partial})$.
\item 
The Dolbeault complex $(\Omega_{X}^{0,\bullet}(\tangentoz{X}),\bar{\partial})$
admits a Kapranov $L_{\infty}[1]$ $\rR$-algebra structure
in which the binary bracket is a cocycle representative of the Atiyah class
(in Dolbeault cohomology) of the holomophic tangent bundle $T_X$.
\item
The Kapranov $L_{\infty}[1]$ $\rR$-algebras in (1) and (2) are quasi-isomorphic.
\item
The Kapranov $L_{\infty}[1]$ $\rR$-algebra structure on $\Omega_{X}^{0,\bullet}(\tangentoz{X})$
is linearizable if and only if the Atiyah class of the holomophic tangent bundle $T_X$ vanishes.
\item Nevertheless, the induced $L_{\infty}[1]$ $\CC$-algebra structure
on $\Omega_{X}^{0,\bullet}(\tangentoz{X})$ is necessarily homotopy abelian.
\end{enumerate}
\end{corollary}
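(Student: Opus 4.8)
The plan is to derive the five assertions by specializing the general statements on Lie pairs and DG Lie algebroids — namely Theorems~\ref{thm:Liepair1}, \ref{thm:Liepair2}, \ref{thm:LPHA} and Corollary~\ref{cor:LPHA}, together with Corollary~\ref{cor:DGManHA} — to the Lie pair $(L,A)=(\tangentcc{X},\tangentzo{X})$ attached to a complex manifold $X$, and then to translate the objects these results produce into their classical Dolbeault descriptions.

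First I would record the two dictionary entries that drive the translation. Since the Chevalley--Eilenberg differential of the Lie algebroid $A=\tangentzo{X}$ is the Dolbeault operator $\bar\partial$, the DG manifold $(\cM,Q)=(A[1],d_{\CE})$ is exactly $(\tangentzo{X}[1],\bar\partial)$, and the base \DGCA{} is $\rR=(\Omega_{A}^{\bullet},d_{\CE})=(\Omega_{X}^{0,\bullet},\bar\partial)$. As recalled just before the statement, the associated pull-back DG Lie algebroid $(\pi^{!}L,\liederivative{s})$ is then precisely the tangent DG Lie algebroid $\big(T(\tangentzo{X}[1]),\liederivative{\bar\partial}\big)$, so that $\sections{\pi^{!}L}=\XX(\tangentzo{X}[1])$ and, by Example~\ref{example:AtDGLA}(2), the Atiyah class of $(\pi^{!}L,\liederivative{s})$ is the Atiyah class of the DG manifold $(\tangentzo{X}[1],\bar\partial)$. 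With this in hand, assertion (1) is Corollary~\ref{cor:DGManHA}(1) — equivalently Theorem~\ref{thm:Liepair1} for this Lie pair — applied to $(\tangentzo{X}[1],\bar\partial)$.

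For assertion (2), I would use that $B:=L/A\cong\tangentoz{X}$, that the Bott $A$-connection on $B$ is the canonical $\bar\partial$-operator on $T^{1,0}_X$, and hence that $d^{\Bott}$ is the Dolbeault differential on $\Omega_{X}^{0,\bullet}(\tangentoz{X})$; then Theorem~\ref{thm:LAKapranov} (equivalently Theorem~\ref{thm:Liepair2}) for $(\tangentcc{X},\tangentzo{X})$ supplies a Kapranov $L_{\infty}[1]$ $\rR$-algebra structure on $\Omega_{X}^{0,\bullet}(\tangentoz{X})$ whose binary bracket represents $\alpha_{(L,A)}$, which by \cite{MR3439229} coincides with the classical Atiyah class of the holomorphic tangent bundle $T_X$. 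Assertion (3) is Theorem~\ref{thm:LPHA} for this Lie pair, read through the two identifications above: it yields the desired $L_{\infty}[1]$ $\rR$-quasi-isomorphism between the algebras of (1) and (2). Assertion (4) is Corollary~\ref{cor:LPHA}(1) combined with the identification of $\alpha_{(L,A)}$ with the Atiyah class of $T_X$, and assertion (5) is Corollary~\ref{cor:LPHA}(2) — equivalently, apply Theorem~\ref{thm:LAHA} to $(\pi^{!}L,\liederivative{s})$ and transport homotopy abelianness of the induced $L_{\infty}[1]$ $\CC$-algebra along the $\CC$-linear quasi-isomorphism underlying (3).

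I do not anticipate a genuine obstacle: the corollary is a formal consequence of results already established in the paper, and the only substantive external inputs are Liao's isomorphism $\Phi$ matching the Atiyah class of $(\pi^{!}L,\liederivative{s})$ with that of the Lie pair $(L,A)$ \cite{MR4665716} (already invoked in the proof of Corollary~\ref{cor:LPHA}) and the classical identification of $\alpha_{(L,A)}$ with Atiyah's original class of $T_X$ \cite{MR3439229}. The point worth underlining rather than proving is the apparent tension between (4) and (5): linearizability over $\rR=(\Omega_{X}^{0,\bullet},\bar\partial)$ is strictly stronger than $\CC$-homotopy abelianness — this is the phenomenon highlighted in the remark following Corollary~\ref{cor:DGManHA} — so a generically nonzero Atiyah class of $T_X$ obstructs (4) without contradicting (5), which is precisely the resolution of the puzzle that motivated Question~\ref{introQA}.
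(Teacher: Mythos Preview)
Your proposal is correct and matches the paper's approach exactly: the corollary is obtained by specializing Theorems~\ref{thm:Liepair1}, \ref{thm:Liepair2}, \ref{thm:LPHA} and Corollary~\ref{cor:LPHA} to the Lie pair $(\tangentcc{X},\tangentzo{X})$, using the identification of $\pi^{!}L$ with the tangent DG Lie algebroid of $(\tangentzo{X}[1],\bar\partial)$ and the identification of $\alpha_{(L,A)}$ with the classical Atiyah class of $T_X$ from~\cite{MR3439229}. The paper does not even write out a separate proof --- it merely records these two identifications in the paragraph preceding the statement and declares ``Thus we have the following'' --- so your more detailed unpacking of which general result yields each item is, if anything, a fuller account than the paper provides.
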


In particular, when $X$ is a Kähler manifold, the Kapranov $L_{\infty}[1]$ $\rR$-algebra on
$\Omega_{X}^{0,\bullet}(\tangentoz{X})$ coincides with
the original one introduced by Kapranov \cite[Theorem~2.6]{MR1671737}.
Thus we have recovered the result,
proved by the first author in~\cite{MR3579974,MR3622306},
that the $L_{\infty}[1]$ algebra discovered by Kapranov is homotopy abelian.
Note that the second assertion in Corollary~\ref{cor:complexmanifold} was already proved
in~\cite{MR4271478}, while the first and third assertions in Corollary~\ref{cor:complexmanifold}
were proved in~\cite{MR4393962}. However, the last two assertions
in Corollary~\ref{cor:complexmanifold} are new.

\section*{Acknowledgements}
The authors thank Zhuo Chen, Kai Wang, and Maosong Xiang for helpful discussions
and are grateful to the Korea Institute for Advanced Study (Xu) and Sapienza Università di Roma
(Stiénon and Xu) for their hospitality during part of this work.

\printbibliography
\end{document}